\numberwithin{equation}{subsection}
\newtheorem{propo}{Proposition}[section]
\newtheorem{corol}[propo]{Corollary}
\newtheorem{theor}[propo]{Theorem}
\newtheorem{lemma}[propo]{Lemma}
\theoremstyle{definition}
\theoremstyle{remark}
\newcommand{\egm}{\normalcolor{}}
\let\oldmarginpar\marginpar
\renewcommand\marginpar[1]{\oldmarginpar{\footnotesize #1}}
\newcommand{\CC}{\mathbb{C}}
\newcommand{\QQ}{\mathbb{Q}}
\newcommand{\ZZ}{\mathbb{Z}}
\newcommand{\RR}{\mathbb{R}}
\newcommand{\Hom}{\operatorname{Hom}}
\newcommand{\Ker}{\operatorname{Ker}}
\newcommand{\Int}{\operatorname{Int}}
\newcommand{\card}{\operatorname{card}}
\newcommand{\id}{\operatorname{id}}
\newcommand{\aug}{\operatorname{aug}}
\begin{document}

\title[Topological constructions of tensor fields on moduli spaces]{Topological constructions of tensor fields on moduli spaces}

    \author[Vladimir Turaev]{Vladimir Turaev}
    \address{
    Vladimir Turaev \newline
    \indent   Department of Mathematics \newline
    \indent  Indiana University \newline
    \indent Bloomington IN47405, USA\newline
    \indent $\mathtt{vturaev@yahoo.com}$}

\begin{abstract} We show how    topology   of a space  may lead to tensor fields   on (the smooth part of)  moduli spaces of the    fundamental group.   
\end{abstract}


\maketitle

\section {Introduction}


Moduli spaces of the  fundamental groups  of  surfaces    carry beautiful geometric structures,   in particular,  Poisson brackets, see  \cite{AB}, \cite{Wo}, \cite{FR}.    These brackets were described by  Goldman 
\cite{Go1}, \cite{Go2}   in terms of  a  Lie bracket in the module   of loops in the surface, see 
  \cite{AKKN1}, \cite{AKKN2},   \cite{Ka}, \cite{LS} for recent work on Goldman's bracket.
   Here we     extend this line of study.  Our starting point is  the work of    van den Bergh \cite{VdB} and   Crawley-Boevey \cite{Cb} who  derive from  any algebra~$A$ and an integer $n\geq 1$  the (commutative) coordinate  algebra  $A_n$   of  the affine scheme $ {\text {Rep}}_n(A)$ of $n$-dimensional representations of~$A$.  These authors  also define a subalgebra $A^t_n \subset A_n$   which -  under appropriate   assumptions   -     is the coordinate  algebra of  the affine quotient scheme ${\text {Rep}}_n(A)//GL_n$. We  view the latter affine scheme  as  the  moduli space  of $n$-dimensional representations of~$A$. Inspired by the interpretation of vector fields on a smooth manifold  as derivations of the algebra of smooth functions on this manifold, we can define vector fields  on ${\text {Rep}}_n(A)//GL_n$  as  derivations of the algebra $A^t_n$. More generally, for any integers $m, n\geq 1$, we can define $m$-tensor fields on ${\text {Rep}}_n(A)//GL_n$  as $m$-linear forms  
$(A^t_n)^m  \to A^t_n$ (or $(A^t_n)^{\otimes m}  \to A^t_n$) which are derivations in all~$m$  variables. Despite a purely algebraic formulation,  this  approach   may lead to smooth tensor fields on the smooth parts of moduli  spaces, see  \cite{MTnew}.   To construct  $m$-linear  forms       in $A^t_n$ which are derivations in all variables, we use a method inspired by the work of    Crawley-Boevey \cite{Cb} on Poisson structures. Namely, we set 
  $\check A=A/[A,A]$ and      derive such  $m$-linear  forms       in $A^t_n$   from   $m$-linear   forms  
$\check A^m  \to \check A$  satisfying certain  assumptions. We  call   $m$-linear  forms  in $ \check A$  satisfying these  assumptions 
   \emph{$m$-braces in~$A$}.



Our  main   aim is a construction of   braces in the group algebras of the fundamental groups of   topological  spaces.  We give two such  constructions. First, consider a   topological space~$X$ and let~$A$ be  the group algebra  of $\pi_1(X)$. 
A \emph{gate} in~$X$ is a   path-connected subspace $C\subset X$ such  that all loops in~$C$ are contractible in~$X$ and~$C$ has a   cylinder  neighborhood $C\times [-1,1]$ in~$X$.  We show that    a gate   in~$X$ gives rise to   an $m$-brace  in~$A$ for all $m\geq 1$. This   \lq\lq gate  brace''    induces     $m$-tensor fields on the moduli spaces of $\pi_1(X)$  for  all  $m\geq 1$.    For example, if~$X$ is a surface with boundary, then any properly embedded segment  in~$X$ is a gate; so, it determines an $m$-brace in~$A$   and an   $m$-tensor field  on the moduli space ${\text {Rep}}_n(A)//GL_n$    for  all  $m, n\geq 1$.

Our  second construction of braces  applies to  so-called  quasi-surfaces which we introduce here as  generalizations of  the usual   surfaces with boundary. A  quasi-surface, $X$,  is  obtained by   gluing a  surface~$\Sigma$    to an arbitrary      topological space      along  a finite set of  disjoint segments in $\partial \Sigma$. These  segments  give rise to gates in~$X$ which   split~$X$   into  the  surface  part (a copy of~$\Sigma$) and the  singular   part (the rest).   By the above, each   gate induces an $m$-brace  in the group algebra $A$ of $\pi_1(X)$ for all $m\geq 1$. For oriented~$\Sigma$, we   use intersections  of loops to define a skew-symmetric \lq\lq intersection  2-brace''  in~$A$  generalizing  the Goldman  bracket of surfaces. Our main result is a  Jacobi-type identity relating the intersection 2-brace  to the gate 3-braces.  This   generalizes  to quasi-surfaces the Jacobi identity for the Goldman  bracket of surfaces. The intersection 2-brace induces   skew-symmetric bilinear forms $\{A^t_n \times A^t_n \to  A^t_n\}_{n\geq 1}$, and our Jacobi-type identity relates them  to the trilinear forms in $\{A^t_N\}_n$ derived from the gate 3-braces.
 For completeness, we also define    intersection pairings in  1-homology of  quasi-surfaces generalizing  the usual  intersection pairings in  1-homology of   surfaces.

Any   surface~$\Sigma$ with boundary   may be viewed as a quasi-surface in multiple ways determined by a  choice of  disjoint properly embedded segments in~$\Sigma$    splitting~$ \Sigma$ into   the  \lq\lq surface part'' and the  \lq\lq singular part''.  For oriented~$\Sigma$, each such splitting determines   a 2-brace  in the group algebra $A$ of $\pi_1(\Sigma)$ and the induced pairings $\{A^t_n \times A^t_n \to  A^t_n\}_{n\geq 1}$. By the above, these brace  and pairings  satisfy  Jacobi-type identities involving    the   3-braces associated with the segments  in question. 


 
 
 
%

The first part of the paper (Sections \ref{AMdddfT0}--\ref{Quasi-Lie brackets and brace algebras}) presents our  algebraic methods and the second part  (Sections \ref{A topological example}--\ref{section3ghgh}) is devoted to   topological constructions.

This work was  supported by the NSF grant DMS-1664358.




\section{Preliminries}\label{coordinate2}\label{AMdddfT0}

We   briefly   recall  representation schemes  and trace   algebras following \cite{VdB}, 
   \cite{Cb}. Then we   discuss derivations in   algebras.

 \subsection{Representation schemes}\label{coordinateddd2w-}  Throughout the paper we fix a commutative base ring~$R$. By a module we mean an $R$-module and by an algebra we mean (unless explicitly stated to the contrary)  an associative $R$-algebra  with unit.
We associate with every  algebra~$A$  and an integer $n\geq 1$ an affine scheme  ${\rm {Rep}}_n (A)$, the  \emph{$n$-th  representation scheme} of~$A$.   For any commutative algebra~$S$,  the set of $S$-valued points of ${\rm {Rep}}_n (A)$  is the set of
 algebra homomorphisms  $A \to {\rm Mat}_n(S)$. The coordinate ring, $A_n$,  of ${\rm {Rep}}_n (A)$
is    generated (over~$R$)  by
the symbols $x_{ij}$ with $x\in A$ and $i,j  \in  \{1,2, \ldots, n\}$. These generators commute  
and satisfy the following relations:     $1_{ij}=\delta_{ij}$ for all $i,j$,  where
$\delta_{ij}$ is the Kronecker delta; for all $x,y\in A$, $ r\in R $,  and $i,j\in \{1,2, \ldots, n\}$,
$$  (rx)_{ij}= r x_{ij}, \quad    (x + y)_{ij}=x_{ij}+ y_{ij} \quad {\rm {and}} \quad (xy)_{ij}= \sum_{l=1}^n \,  x_{il}\,
y_{lj}. $$
The function on the set of $S$-valued points of ${\rm {Rep}}_n (A)$  determined by    $x_{ij}$ assigns to a homomorphism $f: A \to {\rm Mat}_n(S) $   the $(i,j)$-entry of the  matrix $f(x)  $.   That these functions satisfy the relations   above  is straightforward. 

The  action of the group $G=GL_n(R)$  on 
$ \Hom (A,  {\rm Mat}_n(S))$ by conjugations  induces an action of~$G$ on  the commutative algebra $A_n$ for all~$n$. Explicitly, for $g=(g_{kl})\in G$ and any $x\in A, i,j \in \{1,..., n\}$ we have
$$g \cdot x_{ij}=\sum_{k=1}^n \sum_{l=1}^n g_{ik} (g^{-1})_{lj} x_{kl}.$$
The set of invariant elements $A_n^G=\{a\in A_n\, \vert \, Ga =a \}   $ is a subalgebra of $A_n$. 
This   is     the coordinate algebra of the affine quotient scheme ${\rm Rep}_n (A)//G$  which we view as the \lq \lq moduli  space'' of $n$-dimensional representations of~$A$.

\subsection{The module $\check A$ and the  trace}\label{coordinateddd2w} Given an  algebra~$A$,
 let  $A'=[A,A]$ be the submodule of~$A$ spanned by the commutators $ xy-yx $ with $ x,y \in A$.   The quotient module $\check A=A/A'$ is  the zeroth Hochschild homology of~$A$.  Now, for any integer $n\geq 1$, the linear map $A\to A_n, x \mapsto
\sum_{i=1}^n  x_{ii} $   is      called the \emph{trace} and denoted    ${\rm tr}$.  The trace  
 annighilates all the commutators in~$A$ and therefore  ${\rm tr}(A')=0$. 
Thus, the trace induces a
  linear map $ \check A \to A_n$     also denoted~${\rm tr}$.



  The subalgebra of $A_n$ generated by ${\rm {tr}} (A) ={\rm {tr}} (\check A)  $ is  called the \emph{$n$-th trace algebra of~$A$}  and is  denoted $A^t_n$.  A direct computation shows that  ${\rm {tr}} (A) \subset A_n^G $ and therefore $A^t_n \subset A_n^G $.   If  the ground ring~$R$ is an algebraically closed field of  characteristic zero and~$A$ is a finitely generated algebra,  then  a theorem of   Le Bruyn and Procesi \cite{Pro}   implies that    $A^t_n=A^G_n$ so that  $A^t_n$ is         the coordinate algebra of     ${\rm Rep}_n (A)//G$.
    
 \subsection{Derivations} \label{coordrerdd2w}   A {\it derivation} of an algebra~$  {A}$
 is a linear map $d:{   A}\to {   A}$ such that
   $d(xy)= d(x) y+  x d(y)$    for all $x,y \in  {   A}$. We denote by ${\rm {Der}} ( A)$  the module of    derivations of~$ A$. 
   Given   $d_1, d_2 \in {\rm {Der}} (  A)$, the commutator $[d_1, d_2]=d_1 \circ d_2 -d_2 \circ d_1$ is  a derivation  of~$ {A}$. This defines a Lie bracket $[-,-]$ in ${\rm {Der}} (   A)$.
   
   Any   derivation  $d:{ A}\to { A}$ carries $A'=[A,A]$ into itself as  
$$d(xy-yx )=  d(x) y - yd(x)  +   x d(y) -d(y)x     $$
for  $x,y\in A$. 
Therefore~$d$  induces a linear endomorphism of $ \check A = A/ A'$.
A linear   endomorphism of $ \check A $ is   a \emph{weak  derivation} if it is induced by a derivation  ${ A}\to { A}$.

  By \cite[Lemma  4.4]{Cb}, for any derivation $d:A\to A$ and any integer $n\geq 1$, there is a unique  derivation $\widetilde d:A_n\to A_n$ such that $\widetilde d (a_{ij})= (d(a))_{ij}$ for all $a\in A, i,j\in \{1,..., n\}$. Indeed, this formula defines~$\widetilde d$ on the generators of the algebra~$A_n$;   the compatibility with the defining  relations    is straightforward.  Clearly,   $\widetilde  d ({\rm {tr}} (a)) ={\rm {tr}} (d(a))$ for all $a\in   A$.  Therefore $\widetilde d(A^t_n) \subset A^t_n$ and the restriction of $\widetilde d$   to $A^t_n$ is a derivation of the algebra $A^t_n$
   
 \subsection{Remark}    If $C^\infty(M)$ is the algebra of smooth $\RR$-valued functions on a smooth manifold~$M$, then each smooth vector field~$v$ on~$M$ induces a derivation $d_v$  of $C^\infty(M)$ carrying a   function $f\in  C^\infty(M)$ to the function $df(v):M\to \RR$. The map $v \mapsto d_v$ defines  a Lie algebra isomorphism from the Lie algebra of smooth vector field on~$M$ (with the Jacobi-Lie bracket) onto  ${\rm {Der}} (C^\infty(M))$. Given an algebra~$A$ and an integer $n\geq 1$, these results suggest to view    the derivations of the trace  algebra $A^t_n$ as vector fields on (the smooth part) of the affine quotient scheme  ${\rm Rep}_n (A)//G$.  More generally, tensor fields on this affine scheme may be defined as maps $\{(A^t_n)^m\to A^t_n\}_{m\geq 1}$ which are derivations in all~$m$ variables.  Here   for a set~$E$ and an integer $m\geq 1$, we let  $E^m$ be  the  direct product of~$m$ copies of~$E$.

    \section{Braces and brackets}

We define and study braces. 

\subsection{Braces}\label{Bracket modules}
  For an integer $m\geq 1$, an    \emph{$m$-brace}   in an algebra~$A$ is  a mapping  $\mu: (\check A)^{m} \to \check A$   which is a weak  derivation  in all~$m$ variables:     for any $1\leq j\leq  m$ and   $x_1,..., x_{j-1}, x_{j+1},...,x_{m} \in  \check A$, the     map $$\check A \to \check A, \, \, x\mapsto \mu (x_1, ... , x_{j-1} , x , x_{j+1} , ... ,x_m)$$ is a weak derivation.  In particular,   $\mu$ has to be linear in all variablres.   For $m=1$, an $m$-brace in~$A$ is just a weak derivation   $\check A \to \check A$. 
  
If~$A$ is a  commutative algebra, then $A'=0$,   $\check A=A$, and  an  $m$-brace in~$A$  is  a mapping  $\mu: A^{m} \to   A$   which is a    derivation  in all  variables:   for any $1\leq j\leq  m$ and any $x_1,..., x_{j-1}, x_{j+1},...,x_{m} \in  A$, the   map $$A \to A,  \, \, x\mapsto \mu (x_1,  ... , x_{j-1} , x  ,x_{j+1} , ... , x_mt)$$ is   a derivation. 

 The following lemma - inspired by W. Crawley-Boevey \cite{Cb} - is our main tool producing braces in the trace algebras.
  
\begin{lemma}\label{Cbtlemmmmheor}  For any integers $m, n\geq 1$ and any   $m$-brace $\mu$ in an algebra~$A$, there is a unique   $m$-brace $\mu_n$ in the algebra  $ A_n^t$ such that   the trace ${\rm {tr}}: \check A\to A^t_n$ carries~$\mu$ to $\mu_n$ that is for all $x_1,..., x_m\in \check A$, we have  \begin{equation}\label{ccc} \mu_n( {\rm {tr}} (x_1), ... , {\rm {tr}} (x_m))= {\rm {tr}} ( \mu(x_1 , ..., x_m) ). \end{equation}
\end{lemma}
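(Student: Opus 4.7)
The plan is to prove uniqueness first from the generator structure of $A_n^t$, and then to construct $\mu_n$ by induction on $m$, using the Crawley-Boevey lemma of Section~\ref{coordrerdd2w} as the main tool. For uniqueness, I would note that any $m$-brace on the commutative algebra $A_n^t$ is $m$-linear and a derivation in each slot, so iterated Leibniz determines it from its values on $m$-tuples of elements of any generating set. Since $A_n^t$ is generated over~$R$ by ${\rm tr}(\check A)$ and \eqref{ccc} prescribes those values, $\mu_n$ is forced.

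For the base case $m = 1$, the 1-brace $\mu$ is a weak derivation $\check A \to \check A$, induced by some derivation $d \colon A \to A$. I would set $\mu_n := \widetilde d|_{A_n^t}$, which is a derivation of $A_n^t$ by the Crawley-Boevey lemma and satisfies $\mu_n({\rm tr}(y)) = {\rm tr}(d(y)) = {\rm tr}(\mu(y))$. The choice of $d$ is immaterial: two lifts of $\mu$ differ by a derivation $\delta \colon A \to A' = [A,A]$, and $\widetilde\delta$ sends every generator ${\rm tr}(y)$ of $A_n^t$ to ${\rm tr}(\delta(y)) = 0$, so $\widetilde\delta|_{A_n^t}$ vanishes identically.

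For the inductive step, assume the result for $m-1$. For each $y \in \check A$, the slice $\mu(-,\ldots,-,y) \colon \check A^{m-1} \to \check A$ is an $(m-1)$-brace on $A$, so the hypothesis yields a unique $(m-1)$-brace $\nu_y$ on $A_n^t$ with $\nu_y({\rm tr}(x_1), \ldots, {\rm tr}(x_{m-1})) = {\rm tr}(\mu(x_1, \ldots, x_{m-1}, y))$, and uniqueness forces $y \mapsto \nu_y$ to be $R$-linear. The key task is to show that, for every $(a_1, \ldots, a_{m-1}) \in (A_n^t)^{m-1}$, the rule ${\rm tr}(y) \mapsto \nu_y(a_1, \ldots, a_{m-1})$ extends to a derivation $\Psi_{a_1, \ldots, a_{m-1}}$ of $A_n^t$. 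To produce such an extension I would expand each $a_i$ as a polynomial in generators ${\rm tr}(z)$ and use Leibniz and multilinearity of $\nu_y$ to write
$$\nu_y(a_1, \ldots, a_{m-1}) = \sum_\beta T_\beta \, {\rm tr}(\mu(z_1^\beta, \ldots, z_{m-1}^\beta, y))$$
for some $T_\beta \in A_n^t$ and $z_i^\beta \in \check A$. Picking derivations $d^\beta \colon A \to A$ lifting the weak derivations $\mu(z_1^\beta, \ldots, z_{m-1}^\beta, -)$, I would set
$$\Psi_{a_1, \ldots, a_{m-1}} := \sum_\beta T_\beta \, \widetilde{d^\beta}|_{A_n^t}.$$
Since $A_n^t$ is commutative, $A_n^t$-rescalings and finite sums of derivations remain derivations, so $\Psi_{a_1, \ldots, a_{m-1}}$ is a derivation of $A_n^t$ whose values on generators are precisely $\nu_y(a_1, \ldots, a_{m-1})$; these values determine it, giving independence of the expansion. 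I would then define $\mu_n(a_1, \ldots, a_m) := \Psi_{a_1, \ldots, a_{m-1}}(a_m)$. The derivation property in the last slot is automatic, and both multilinearity and the derivation property in the first $m-1$ slots follow by comparing the two sides as derivations of $A_n^t$ on the generators ${\rm tr}(y)$ and invoking the same uniqueness principle, using the corresponding properties of $\nu_y$.

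The main obstacle will be the inductive step, specifically showing that $\Psi_{a_1, \ldots, a_{m-1}}$ is well-defined. This rests on two small observations that must work in tandem: that a derivation of the commutative algebra $A_n^t$ is determined by its values on the generating set ${\rm tr}(\check A)$, and that the ad hoc sum $\sum_\beta T_\beta \, \widetilde{d^\beta}$ of $A_n^t$-rescaled derivations is itself a derivation thanks to commutativity of $A_n^t$; once these are in place, the remaining verifications reduce to routine bookkeeping.
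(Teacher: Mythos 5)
Your proof is correct, and while it rests on the same two pillars as the paper's argument (Leibniz expansion in terms of the generators ${\rm tr}(\check A)$, and the lifted derivation $\widetilde d$ of $A_n^t$ from Crawley-Boevey's lemma), the organization is genuinely different. The paper's proof is non-inductive: it fixes expansions $E_1,\ldots,E_m$, defines $F(E_1,\ldots,E_m)$ directly, then proves independence of $E_m$ by reducing to the case $y_j={\rm tr}(x_j)$ for $j<m$ and exhibiting $F=\widetilde d(y_m)$, and finally asserts that independence of the remaining $E_i$ is proved ``similarly.'' You instead induct on $m$: the inductive hypothesis furnishes, for each $y\in\check A$, the $(m-1)$-brace $\nu_y$ on $A_n^t$, and you then package the dependence on the last slot into an honest derivation $\Psi_{a_1,\ldots,a_{m-1}}=\sum_\beta T_\beta\,\widetilde{d^\beta}|_{A_n^t}$ (using that derivations of a commutative algebra form an $A_n^t$-module). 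The gain of your route is that every well-definedness question (independence of the expansions, of the lifts $d^\beta$, and multilinearity and Leibniz in the first $m-1$ slots of $\mu_n$) reduces to a single crisp principle: a derivation of $A_n^t$ is determined by its values on the generating set ${\rm tr}(\check A)$. This replaces the paper's somewhat informal ``similarly prove'' step with a uniform mechanism, at the modest cost of running an induction. Both proofs are sound; yours is arguably the more transparent.
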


  \begin{proof} The uniqueness of $\mu_n$ is clear  as ${\rm {tr}} (A)$ generates the algebra $A^t_n$.    We first prove the existence of $\mu_n$  for $m=1$.   We need to show  that given a weak derivation $\mu:\check A\to \check A$, there is a   derivation $\mu_n :A^t_n \to A^t_n$ such that   $\mu_n( {\rm {tr}} (x))= {\rm {tr}} (\mu(x))$ for all $x\in \check A$.   Pick a  derivation $d:A\to A$ inducing $\mu$.  By Section~\ref{coordrerdd2w},  the induced   derivation $\widetilde d:A_n\to A_n$ restricts to a derivation $\mu_n: A^t_n\to A^t_n$  of the algebra $A^t_n$.  The map $\mu_n$ satisfies the  conditions of the lemma.

Suppose now that $m\geq 2$. Since the  algebra $A_n^t$ is generated by the set ${\rm {tr}} (\check A) \subset A^t_n$, every  $y\in A^t_n$ has a (non-unique)    finite expansion  
\begin{equation}\label{lblb} y=\sum c_{x_1^y,..., x_r^y} {\rm {tr}} (x_1^y) \cdots {\rm {tr}} (x_r^y) \end{equation}
  where the sum is over some  finite sequences   $x_1^y,..., x_r^y\in \check A$ and the coefficients 
  $c_{x_1^y,..., x_r^y}$ are in~$R$. 
  Pick   any $y_1,..., y_m \in  A^t_n$ and for   $j=1,..., m$ pick an expansion $E_j$ of $y_j$ as in \eqref{lblb}.   If  an $m$-brace $\mu_n: (A_n^t)^m\to A_n^t$  satisfies the conditions of the lemma, then  using  $E_1,..., E_m$,  the Leibnitz rule and \eqref{ccc}, we obtain  that  $\mu_n(y_1,  ... ,  y_m)= F(E_1,..., E_m) $ where $ F(E_1,..., E_m) $   is     a sum  of 
   products  determined by   the  summands on the right-hand sides of the expansions $E_1,..., E_m$.  Each product   involves    factors of 3 types: 
 
 (I) the coefficients $c  \in R$  appearing in the summands   in question;
 
 (II) the   traces  $ {\rm {tr}} (x_i^{y_j})\in A_n^t$ where $j=1,..., m$ and  $i$ runs over   the indices $1,..., r=r_j$  determined by the summand of $E_j$ except  one of these  indices, say, $i_j$;
 
(III)  the   factor $ {\rm {tr}} ( \mu(x_{i_1}^{y_1} , ... , x_{i_m}^{y_m}) ) \in A_n^t$.

   We claim that the element  $F=F(E_1,..., E_m) $ of $A_n^t  $    does not depend on the choice of  the expansion $E_m$ of  $y_m$. It is easy to reduce this claim to its  special case where $y_j={\rm {tr}} (x_j)$ for $j=1,..., m-1$ and $x_j\in \check A$. (It is understood that we keep $E_m$ and  use the formula  $y_j={\rm {tr}} (x_j)$  as  the expansion $E_j$ for $j\leq m-1$). Consider the projection 
  $p:A\to \check A=A/A'$.    By the assumptions of the lemma,   there is    a derivation $d:A\to A$ (possibly,  depending  on $x_1,..., x_{m-1}$)  such that     $$\mu (x_1,  ... ,  x_{m-1} ,p(a) ) =
 p(d(a)) \in \check A\quad {\rm {for\,\,  all}} \,\, a\in A. $$  
For any $x\in \check A$ and $a\in p^{-1} (x)\subset A$, we have the following equalities in $A_n^t$:  $${\rm {tr}} ( \mu(x_{1} , ... ,  x_{m-1} , x) )= {\rm {tr}} ( \mu(x_{1},  ... , x_{m-1} ,  p(a) )) $$ $$  = {\rm {tr}} (p(d(a)) )
 = {\rm {tr}} (d(a) ) =\sum_{i=1}^n (d(a))_{ii} =\sum_{i=1}^n \widetilde d( a_{ii})= \widetilde d({\rm {tr}}(a)) = \widetilde d({\rm {tr}}(x)). $$
 Using this formula to compute all  factors of type (III) above,  we easily deduce that     $F=  \widetilde d(y)$. Since $ \widetilde d(y)$ does not depend on the choice of $E_m$, neither does~$F$. 
 
 
 Coming back to  arbitrary $y_1,..., y_m \in A^t_n$, we similarly prove that the element $F=F(E_1,..., E_m)$ of $ A_n^t$     does not depend on the choice of  the expansion   $E_i$ for all  $i=1,..., m$. In other words, $F$ depends only on  $y_1,..., y_m$. We take~$F$ as $\mu_n(y_1,  ... ,  y_m)$. 
The resulting map $\mu_n: (A_n^t)^m\to A_n^t$  is easily seen to be an $m$-brace in  $ A_n^t$  and to satisfy \eqref{ccc}.
   \end{proof}

  \subsection{Brackets}\label{Bracketsssy modules}    Given an integer $m\geq 1$, an   \emph{$m$-bracket} in a module~$M$ is   a   map  $ \mu:{M}^{m}\to {M}$ which is linear in every variable.  
For    $\varepsilon\in R$,    we say that~$\mu$     is \emph{$\varepsilon$-symmetric} if  for all  $x_1,..., x_m\in {M}$, 
$$\mu(x_1 , \ldots ,  x_{m-1},x_m)= \varepsilon \,  \mu(x_m , x_1, \ldots , x_{m-1}  ).$$
If $\varepsilon=+1$, then  $\varepsilon$-symmetric brackets are said to be \emph{cyclically symmetric}.
If $\varepsilon=-1$, then   $\varepsilon$-symmetric brackets are said to be \emph{skew-symmetric}.



\begin{lemma}\label{Cbeeeetlemmmmheor}  Given an algebra~$A$,  integers  $m,n \geq 1, \varepsilon \in R$,   and an  $\varepsilon$-symmetric $m$-bracket $\mu$ in~$\check A$ which  is   a  weak   derivation in the $m$-th variable, there is a unique   $m$-brace $\mu_n$ in the algebra  $ A_n^t$ such that    the trace ${\rm {tr}}: \check A\to A^t_n$ carries~$\mu$ to $\mu_n$.  The brace $\mu_n$ is  $\varepsilon$-symmetric. 
\end{lemma}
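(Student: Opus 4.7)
The plan is to reduce the statement to Lemma \ref{Cbtlemmmmheor}. The key observation I would exploit is that $\varepsilon$-symmetry upgrades ``weak derivation in the last variable'' to ``weak derivation in every variable,'' so once I establish this upgrade, the existence and uniqueness of $\mu_n$ will follow immediately, and only the $\varepsilon$-symmetry of $\mu_n$ will remain.

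To upgrade the hypothesis, fix $j\in\{1,\dots,m\}$ and iterate the cyclic identity $\mu(a_1,\dots,a_m)=\varepsilon\,\mu(a_m,a_1,\dots,a_{m-1})$ exactly $m-j$ times. This should yield
$$\mu(x_1,\dots,x_m)=\varepsilon^{\,m-j}\,\mu(x_{j+1},\dots,x_m,x_1,\dots,x_j),$$
placing $x_j$ in the $m$-th slot. Viewed as a function of $x_j$ with the remaining arguments fixed, the right-hand side is a scalar multiple of a weak derivation by hypothesis, hence a weak derivation. Therefore $\mu$ is a weak derivation in every variable, Lemma \ref{Cbtlemmmmheor} applies, and it produces a unique $m$-brace $\mu_n$ on $A_n^t$ satisfying \eqref{ccc}.

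For the $\varepsilon$-symmetry of $\mu_n$, I would introduce the auxiliary map $\nu_n(y_1,\dots,y_m):=\varepsilon\,\mu_n(y_m,y_1,\dots,y_{m-1})$ on $(A_n^t)^m$. Since $\mu_n$ is a derivation in each argument, so is $\nu_n$ (the $j$-th variable of $\nu_n$ is the $(j{-}1)$-st of $\mu_n$, or the $m$-th if $j=1$), so $\nu_n$ is again an $m$-brace on $A_n^t$. A short computation using \eqref{ccc} for $\mu_n$ and the $\varepsilon$-symmetry of $\mu$ yields
$$\nu_n(\tr(x_1),\dots,\tr(x_m))=\varepsilon\,\tr(\mu(x_m,x_1,\dots,x_{m-1}))=\tr(\mu(x_1,\dots,x_m)),$$
so $\nu_n$ also satisfies \eqref{ccc}. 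The uniqueness assertion in Lemma \ref{Cbtlemmmmheor} then forces $\nu_n=\mu_n$, which is precisely the $\varepsilon$-symmetry of $\mu_n$. I expect no substantive obstacle here: the crux is the elementary cyclic reduction at the start, after which everything is a formal application of the uniqueness built into Lemma \ref{Cbtlemmmmheor}.
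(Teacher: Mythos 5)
Your proposal is correct and follows essentially the same route as the paper: reduce to Lemma~\ref{Cbtlemmmmheor} by observing that $\varepsilon$-symmetry propagates the weak-derivation property from the last variable to all variables, then deduce the $\varepsilon$-symmetry of $\mu_n$ from the uniqueness clause. The paper leaves both the cyclic-iteration upgrade and the final symmetry step as one-line assertions; you have supplied the details they omit.
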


  \begin{proof}  Since~$\mu$ is     $ \varepsilon$-symmetric  and    is a  weak derivation  in one  variable, it is a  weak derivation  in  all variables. Thus,~$\mu$ is a brace. By Lemma~\ref{Cbtlemmmmheor}, there is a unique  $m$-brace $\mu_n$ in   $ A_n^t$ such that   the trace   carries~$\mu$ to $\mu_n$. The $ \varepsilon$-symmetry of~$\mu$ implies 
  that $\mu_n$ is  $\varepsilon$-symmetric. 
  \end{proof}

\section{Braces in group algebras}\label{Derivative brackets in group algebras}

In  this section,   $A=R[\pi]$ is the group algebra of a group~$\pi$. We    construct   braces in~$A$ starting from Fox derivatives in~$A$. 

 \subsection{Computation of    $\check A$}\label{zzz}  By definition, the module $A' \subset A$ is generated by  the set $\{ uv-vu\, \vert \, u,v \in A\}$. Since $\pi\subset A$ generates~$A$,  the module $A' $ is generated by the set $\{ uv-vu\, \vert \, u,v \in \pi\}$.  Since   $uv=u(vu)u^{-1}$ for   $u,v\in \pi$, the module $A' $ is generated by  the set $\{uw u^{-1}-w \, \vert \, u,w \in \pi\}$.  Thus,   $\check A =A/A'=R\check \pi$ is the free module   whose basis~$\check \pi$  is the set of conjugacy classes of elements of~$\pi$.  
  
  \subsection{Fox derivatives}\label{Constructions of brackets}   A (left)  \emph{Fox derivative} in~$A$ is a linear map $\partial:A\to A$ such that $\partial (xy)=\partial (x) +x \partial (y)$ for all $x,y \in \pi \subset A$.
For any $x,y \in A$, we have then $\partial (xy)=\partial (x) \, {\rm {aug}} (y)+x \partial (y)$ where 
  ${\rm {aug}}:A\to R$ is the linear map carrying all elements of~$\pi$ to $1\in R$.
 For   $x\in \pi$,  we can uniquely expand $\partial (x)= \sum_{a\in \pi}    (x/a)_\partial \, a$ where $(x/a)_\partial\in R$ is non-zero  for a finite set of~$a$. Consider the map
   $$ \pi\to A, \,\, x\mapsto   \sum_{a\in \pi}   (x/a)_\partial \,  a^{-1} x a   $$
 and denote its linear extension $A\to A$ by  $ \Delta_{\partial}$.  
  
  \begin{lemma}\label{sDDDtrbucte}    
   $\Delta_\partial (A')=0$.
   \end{lemma}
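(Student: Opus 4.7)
The plan is to use the description of $A'$ from Section~\ref{zzz}: as a module, $A'$ is generated by elements of the form $uwu^{-1}-w$ with $u,w\in\pi$. Hence it suffices to prove the identity $\Delta_\partial(uwu^{-1})=\Delta_\partial(w)$ for all $u,w\in\pi$. This reduces an a priori infinite verification to a single computation on conjugates.

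First I would record the auxiliary fact that any Fox derivative $\partial$ satisfies $\partial(1)=0$ (apply the Leibniz rule to $1=1\cdot 1$), hence $\partial(u^{-1})=-u^{-1}\partial(u)$ for $u\in\pi$. Applying the Fox rule twice to $uwu^{-1}$ then gives
\[
\partial(uwu^{-1})=\partial(u)+u\,\partial(w)-uwu^{-1}\partial(u).
\]
Expanding $\partial(u)=\sum_{a}(u/a)_\partial\,a$ and $\partial(w)=\sum_{b}(w/b)_\partial\,b$, I read off the coefficients $(uwu^{-1}/c)_\partial$ that enter into the definition of $\Delta_\partial(uwu^{-1})=\sum_{c}(uwu^{-1}/c)_\partial\, c^{-1}(uwu^{-1})c$ as a sum of three group-indexed sums corresponding to the three terms above.

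The heart of the argument is the following two conjugation identities in $\pi$, both verified by direct cancellation:
\[
(ub)^{-1}(uwu^{-1})(ub)=b^{-1}wb,\qquad (uwu^{-1}a)^{-1}(uwu^{-1})(uwu^{-1}a)=a^{-1}(uwu^{-1})a.
\]
The first identity shows that the sum coming from $u\,\partial(w)$, indexed by $c=ub$, contributes exactly $\sum_{b}(w/b)_\partial\, b^{-1}wb=\Delta_\partial(w)$. The second identity shows that the sum from $\partial(u)$ (with index $c=a$) and the sum from $-uwu^{-1}\partial(u)$ (with index $c=uwu^{-1}a$) produce the same element $a^{-1}(uwu^{-1})a$, so these two sums cancel. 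Adding up yields $\Delta_\partial(uwu^{-1})=\Delta_\partial(w)$, as required.

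The only mildly delicate point is bookkeeping: one must be careful that when the supports of $\partial(u)$, $u\,\partial(w)$, and $uwu^{-1}\partial(u)$ overlap, one still recovers $\Delta_\partial(uwu^{-1})$ correctly from the combined expansion. This is automatic because $\Delta_\partial$ is $R$-linear in $\partial$ applied to a fixed $x$, so one may split $\partial(uwu^{-1})$ into any three summands and process them separately; the two cancelling sums cancel termwise thanks to the second conjugation identity, so no reshuffling is needed. Once this is clear, the computation is short and self-contained.
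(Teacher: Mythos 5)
Your proof is correct, but it takes a different route from the paper's. The paper uses the generating set $\{xy-yx : x,y\in\pi\}$ for $A'$ and directly checks that $\Delta_\partial(xy)=\Delta_\partial(yx)$: expanding $\partial(xy)=\partial(x)+x\partial(y)$ and conjugating term by term yields
\[
\Delta_\partial(xy)=\sum_{a\in\pi}\bigl((x/a)_\partial\,a^{-1}xya+(y/a)_\partial\,a^{-1}yxa\bigr),
\]
an expression visibly symmetric under $x\leftrightarrow y$. You instead use the other generating set recorded in Section~\ref{zzz}, $\{uwu^{-1}-w : u,w\in\pi\}$, and prove conjugation invariance $\Delta_\partial(uwu^{-1})=\Delta_\partial(w)$. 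Both are legitimate, and both hinge on the same telescoping of conjugations inside the formula for $\Delta_\partial$; the tradeoffs are minor. The paper's version avoids inverses entirely, needing only the product rule, and the two terms merge into one symmetric sum rather than splitting into a surviving term and a cancelling pair. Your version requires the extra preliminary step $\partial(u^{-1})=-u^{-1}\partial(u)$ and tracks three summands, but it has the conceptual merit of directly exhibiting $\Delta_\partial$ as a conjugation-invariant operation on $\pi$, which is exactly why it descends to the set of conjugacy classes $\check\pi$. Your closing remark about linearity in the split of $\partial(uwu^{-1})$ into three summands is the right thing to flag and correctly disposes of the potential overlap issue, since for fixed $x\in\pi$ the map $\sum_a r_a\,a \mapsto \sum_a r_a\,a^{-1}xa$ is $R$-linear in the argument.
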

  
\begin{proof}   
  It suffices to prove that $\Delta_\partial (xy  -yx)=0$ for any $x,y\in \pi$.
   We have  $$\partial (xy)=\partial (x)  +x \partial (y)= \sum_{a\in \pi}  \big (  (x/a)_\partial \, a+   (y/a)_\partial \, xa\big ).$$
   Therefore, by the definition of $\Delta_\partial$,  
  $$\Delta_\partial (xy  )= 
  \sum_{a\in \pi} \big ( (x/a)_\partial \,  a^{-1} xy a +    (y/a)_\partial \,  (xa)^{-1} xy (xa) \big )$$
  $$ =\sum_{a\in \pi} \big ( (x/a)_\partial \,  a^{-1} xy a +    (y/a)_\partial \,  a^{-1} y xa \big ).$$
  The  latter  expression is invariant under the permutation $x \leftrightarrow y$. So,  $\Delta_\partial (xy  )=\Delta_\partial (yx  )$ and   $\Delta_\partial (xy  -yx)=0$.
  \end{proof}

  The linear map $\check A=A/A'\to A$ induced by $\Delta_\partial:A\to A$ is denoted by $\check \Delta_\partial$.

 \begin{theor}\label{sDDjkgfjfgjDtrbucte} Let $p:A\to \check A$ be the projection.    For any 
   $m\geq 1$  and any Fox derivatives $\partial_1,  ..., \partial_m:A \to A$,  the  map  $\mu^m:\check A^{  m} \to \check A$ defined by  
\begin{equation}\label{cccdmm} \mu^m(x_1 ,     \ldots ,  x_m)=p \big  ( \check \Delta_{\partial_1} (x_1)    \cdots
  \check \Delta_{\partial_m} (x_m) \big ) \end{equation}
  for $x_1,   ..., x_m  \in  \check A$  is  an $m$-brace  in~$A$.
   \end{theor}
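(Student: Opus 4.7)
The plan is to establish multilinearity (immediate from the linearity of each $\check\Delta_{\partial_i}$ and of $p$) and then to show that, for each $j\in\{1,\ldots,m\}$, the induced map in the $j$-th slot (with the other arguments fixed) is a weak derivation $\check A\to\check A$.

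First I would use the cyclic invariance of the projection $p:A\to\check A$ (i.e.\ $p(ab)=p(ba)$ for all $a,b\in A$) to collapse the $m$-slot situation to a one-slot problem. Fix $j$, put
\[
u=\check\Delta_{\partial_1}(x_1)\cdots\check\Delta_{\partial_{j-1}}(x_{j-1})\in A,\quad v=\check\Delta_{\partial_{j+1}}(x_{j+1})\cdots\check\Delta_{\partial_m}(x_m)\in A,
\]
and set $w=vu\in A$. The map in the $j$-th variable becomes $x\mapsto p(u\,\check\Delta_{\partial_j}(x)\,v)=p(\check\Delta_{\partial_j}(x)\,w)$. So it suffices to prove the following claim: for any Fox derivative $\partial$ of $A$ and any $w\in A$, the map $p(z)\mapsto p(\Delta_\partial(z)\,w)$ is a weak derivation on $\check A$.

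To prove the claim I would exhibit an explicit derivation $d:A\to A$ inducing this map. For $z\in\pi$, let
\[
\phi(z)=\sum_{a\in\pi}(z/a)_\partial\,awa^{-1}\in A, \qquad d(z)=\phi(z)\cdot z,
\]
and extend $d$ linearly to $A$. The key auxiliary identity is
\[
\phi(zz')=\phi(z)+z\,\phi(z')\,z^{-1}\qquad\text{for all }z,z'\in\pi,
\]
which follows from $\partial(zz')=\partial(z)+z\,\partial(z')$ by the reindexing $a=zb$ in $z\,\partial(z')=\sum_b(z'/b)_\partial\, zb$. Multiplying this identity by $zz'$ on the right immediately yields $d(zz')=d(z)\,z'+z\,d(z')$, so $d$ is a derivation of $A$. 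Finally, the cyclic relation $p(z\cdot awa^{-1})=p(a^{-1}za\cdot w)$ gives
\[
p(d(z))=p(z\,\phi(z))=\sum_a(z/a)_\partial\,p(a^{-1}zaw)=p(\Delta_\partial(z)\,w)
\]
for all $z\in\pi$, hence for all $z\in A$ by linearity. This identifies the $j$-th-slot map with the weak derivation $\check A\to\check A$ induced by $d$, finishing the verification that $\mu^m$ is an $m$-brace.

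The main obstacle is guessing the witness derivation $d(z)=\phi(z)\,z$; once it is in hand, every verification (the Leibniz rule for $d$ and the equality $p\circ d=p(\Delta_\partial(\cdot)\,w)$) is a short bookkeeping computation driven by the Fox-derivative Leibniz rule and the cyclic invariance of $p$. No clever tricks beyond these two ingredients seem to be needed.
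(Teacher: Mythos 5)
Your proof is correct and essentially the same as the paper's. The paper also reduces to one slot via the cyclic invariance $p(G\,\check\Delta_{\partial_j}(x)\,H)=p(\,\cdot\,HG\,\cdot\,)$ and then exhibits the same witness derivation, namely the linear extension of $x\mapsto\sum_{a\in\pi}(x/a)_{\partial_j}\,aFa^{-1}x$ with $F=HG$ (your $w=vu$); your intermediate step isolating $\phi$ and the twisted-cocycle identity $\phi(zz')=\phi(z)+z\phi(z')z^{-1}$ is a slightly cleaner way to verify the Leibniz rule, but the argument is the same.
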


\begin{proof}   
 We need to prove that  $\mu^m$  is a weak derivation in all variables, i.e.,  for any  $i=1,...,m$  and $x_1,..., x_{i-1}, x_{i+1},..., x_m\in \check A$, the  map
\begin{equation}\label{678} \check A \to \check A, \,\, x \mapsto \mu^m(x_1,  \ldots   , x_{i-1} , x   ,  x_{i+1}, \ldots   , x_{m})\end{equation} is induced by a derivation in~$  A$. 
   Set $$G= \check \Delta_{\partial_1} (x_1)   \cdots
  \check \Delta_{\partial_{i-1}} (x_{i-1}) \in A \quad {\rm {and}} \quad H= \check \Delta_{\partial_{i+1}} (x_{i+1})   \cdots
  \check \Delta_{\partial_{m}} (x_{im}) \in A.$$
   For $x  \in \pi$, we expand $\partial_i (x)= \sum_{a\in \pi}  (x/a) a$ with $(x/a) =(x/a) _{\partial_i}$. Then 
   $$\mu^m(x_1,  \ldots , x_{i-1} , x   ,  x_{i+1}, \ldots  ,  x_{m}) =p(  G   \check \Delta_{\partial_{i}} (x) H) $$
$$=p(  G \big ( \sum_{a\in \pi}  (x/a)     a^{-1} x a\big ) H) =p(    \sum_{a\in \pi}  (x/a)   G  a^{-1} x a H) =p( \sum_{a\in \pi} (x/a) a HG  a^{-1} x )$$ where we use that $p(   G  a^{-1} x aH) =p(  a  HG  a^{-1} x )$.
   Thus,  the  map \eqref{678}  is induced by the linear map $ A\to A$ carrying any $x\in \pi$ to $    \sum_{a\in \pi} (x/a) a GH  a^{-1} x.  $ It remains to prove that  for any $F\in A$, the linear map $ d=d_F:A\to A$ carrying any $x\in \pi$ to $    \sum_{a\in \pi} (x/a) a F  a^{-1} x  $ is a derivation.
   Indeed, for
$x, y \in \pi$, we have 
$$d (x)=  \sum_{a\in \pi} (x/a) a F  a^{-1} x\,\, \,\, \,  {\rm {and}} \,\,\,    \quad d(y)=\sum_{a\in \pi} (y/a)  a F a^{-1} y.$$ Also, 
$$\partial_i(xy) =\partial_i(x) +x \partial_i (y)= \sum_{a\in \pi}  \big ( (x/a) a +   (y/a) x a \big )$$  
    and so 
   $$ d (xy)=\sum_{a\in \pi}  \big ( (x/a) a F a^{-1} xy  +   (y/a)  xa F (x a)^{-1} xy \big )   =d(x) y +x d(y).$$
     Thus, $d$ is a derivation in~$A$. This completes the proof of the theorem. \end{proof}  
     
For  $m=1$, Theorem~\ref{sDDjkgfjfgjDtrbucte} may be rephrased by saying that for any Fox  derivative~$\partial $ in~$A$, the linear map $\check A\to \check A$ induced by  $\Delta_\partial:A\to A$ is  also induced by a derivation  $d=d_\partial:A\to A $. This derivation   carries  any $x\in \pi$ to $\sum_{a\in \pi} (x/a)_\partial \, x ={\rm {aug}} (\partial (x)) x $. In contrast to  $\Delta_\partial$,  the derivation $d_\partial$ may not annighilate $A'$. 
   
   Combining Theorem~\ref{sDDjkgfjfgjDtrbucte} with  Lemma~\ref{Cbtlemmmmheor}  we obtain the following.

\begin{corol}\label{sDDjmmmkgfjfgjDtrbucte}    For any 
  integers  $m,n\geq 1$  and  Fox derivatives $\partial_1, ..., \partial_m$ in~$ A$,  there is a unique 
$m$-brace $\mu^m_n$ in $ A_n^t$ such that  for all  $x_1,  ..., x_m \in A$,  we have
 $$\mu^m_n({\rm tr} (x_1) ,    \ldots  , {\rm tr} ( x_m))={\rm tr} (    \Delta_{\partial_1} (x_1)      \cdots
   \Delta_{\partial_m} (x_m)   ) . $$
 \end{corol}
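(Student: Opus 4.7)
The plan is to deduce this corollary directly by composing the two preceding results. First, I would apply Theorem~\ref{sDDjkgfjfgjDtrbucte} to the given Fox derivatives $\partial_1,\ldots,\partial_m$ to produce an $m$-brace $\mu^m\colon(\check A)^m\to\check A$ in $A$, defined by
$$\mu^m(\bar x_1,\ldots,\bar x_m)=p\bigl(\check\Delta_{\partial_1}(\bar x_1)\cdots\check\Delta_{\partial_m}(\bar x_m)\bigr)$$
for $\bar x_1,\ldots,\bar x_m\in\check A$. Then I would feed this brace into Lemma~\ref{Cbtlemmmmheor} to obtain the unique $m$-brace $\mu^m_n$ in $A^t_n$ with the property that ${\rm tr}\colon\check A\to A^t_n$ intertwines $\mu^m$ and $\mu^m_n$.

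The remaining task is to check that the defining identity of Lemma~\ref{Cbtlemmmmheor} for $\mu^m_n$ translates into the displayed formula of the corollary involving the lifted maps $\Delta_{\partial_i}\colon A\to A$. For any $x\in A$, set $\bar x=p(x)\in\check A$. By construction $\check\Delta_{\partial_i}(\bar x)=\Delta_{\partial_i}(x)\bmod A'$, but since ${\rm tr}\colon A\to A^t_n$ vanishes on $A'$ and factors through $\check A$, the equality
$${\rm tr}(\Delta_{\partial_i}(x))={\rm tr}(\check\Delta_{\partial_i}(\bar x))$$
holds for each $i$. Applying the same observation to the whole product $\check\Delta_{\partial_1}(\bar x_1)\cdots\check\Delta_{\partial_m}(\bar x_m)\in A$ (before projecting via $p$) shows that
$${\rm tr}\bigl(\mu^m(\bar x_1,\ldots,\bar x_m)\bigr)={\rm tr}\bigl(\Delta_{\partial_1}(x_1)\cdots\Delta_{\partial_m}(x_m)\bigr),$$
because $p$ disappears once one takes the trace. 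Combining this with the defining identity $\mu^m_n({\rm tr}(\bar x_1),\ldots,{\rm tr}(\bar x_m))={\rm tr}(\mu^m(\bar x_1,\ldots,\bar x_m))$ from Lemma~\ref{Cbtlemmmmheor}, and with the fact that ${\rm tr}(\bar x_i)={\rm tr}(x_i)$, yields the required formula. Uniqueness is inherited from Lemma~\ref{Cbtlemmmmheor}, since ${\rm tr}(A)$ generates $A^t_n$.

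There is no real obstacle here; the only thing to be careful about is that $\Delta_{\partial_i}$ is a map $A\to A$ while $\check\Delta_{\partial_i}$ is a map $\check A\to A$, so one must keep track of when the projection $p$ is applied and use that the trace kills $A'$ to identify the two versions under ${\rm tr}$.
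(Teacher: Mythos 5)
Your proposal is correct and reproduces the paper's own (one-line) argument: apply Theorem~\ref{sDDjkgfjfgjDtrbucte} to build the $m$-brace $\mu^m$ in $A$, then feed it into Lemma~\ref{Cbtlemmmmheor} to obtain $\mu^m_n$ in $A^t_n$, and unwind the defining trace identity. One small inaccuracy worth noting: $\check\Delta_{\partial_i}\colon\check A\to A$ satisfies $\check\Delta_{\partial_i}(p(x))=\Delta_{\partial_i}(x)$ exactly (not merely modulo $A'$), since by Lemma~\ref{sDDDtrbucte} $\Delta_{\partial_i}$ annihilates $A'$ and so descends through the projection in the domain while still landing in $A$; this only streamlines your argument and does not affect its validity.
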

   
If  $\partial_1=  \cdots = \partial_m$, then the $m$-braces $\mu^m$ and $\mu^m_n$ are cyclically symmetric. This follows from the identities $p(xy)=p(yx)$ and 
   ${\rm tr} ( xy)={\rm tr} ( yx)$ for all $x,y\in A$.

\subsection{Equivalence of Fox derivatives}\label{topdequivolsett}  Given  a Fox derivative $\partial$ in~$ A$ and any $g\in \pi$, the linear map $A\to A, x \mapsto \partial (x)g$ is also a Fox derivative denoted $\partial \cdot g$. We say that  two  Fox derivatives  $\partial, \partial' $ in~$A$ are equivalent if there is $g\in \pi$ such that $\partial'=\partial \cdot g$. This is indeed an  equivalence relation.
Moreover,  equivalent Fox derivatives   induce the same  braces in $\check A$ and $A^t$. 
This follows from the identities $$\Delta_{\partial \cdot g} (x)=  g^{-1} \Delta_{\partial } (x)  g, \quad
 p(g^{-1}x g)=p(x), \quad {\rm tr}(g^{-1}x g)  ={\rm tr} (x)$$ for all $x\in A$ and $g\in \pi$.

\section{Quasi-Lie brackets and brace algebras}\label{Quasi-Lie brackets and brace algebras}

We define  quasi-Lie  brackets and brace algebras. 

\subsection{Quasi-Lie  brackets}
A \emph{quasi-Lie pair of brackets}  in  a module~$M$ is a pair formed by a  skew-symmetric 2-bracket $[-,-]$ in~$ M$ and a cyclically symmetric 3-bracket
$[-,-,-]$ in~$M$ such that
  for any $x,y,z \in M$, we have 
\begin{equation}\label{Jaco} [[x,y],z] + [[y,z], x]+ [[z,x], y]= [x,y,z]- [y,x,z] . \end{equation} 
Here the left-hand side  is the usual  Jacobiator of the 2-bracket. Both sides of \eqref{Jaco}  are   cyclically symmetric.
 We call   \eqref{Jaco} the \emph{quasi-Jacobi identity}. 
For the zero  3-bracket,  we recover the  standard Jacobi identity.


  \subsection{Examples} 1.  Any bilinear pairing $M^{ 2} \to M, (x,y)\mapsto xy$ induces a  quasi-Lie pair of brackets in~$M$ with the 2-bracket  $[x,y]=xy-yx$ and the 3-bracket 
$$\label{333} [x,y,z]=(xy)z +(yz)x+ (zx)y -  x(yz) - y(zx) - z(xy) $$
  for  $x,y,z\in M$. 
  
  2. For a quasi-Lie pair of brackets $[-,-], [-,-,-]$  in a module~$M $  and a   3-bracket $b$ in~$M$  invariant under all permutations of the variables,  the pair   $[-,-], [-,-,-]+b$ is also a  quasi-Lie pair.

  \subsection{Brace algebras}\label{Teadsrmy}  
A   \emph{brace algebra} is an algebra~$A$  endowed with a quasi-Lie pair of brackets in  the module  $\check A$ such that both these brackets are braces  in~$A$ in the sense of Section~\ref{Bracket modules}. 
For example, a commutative brace   algebra  with zero 3-bracket is a  Poisson algebra in the usual sense.

A   \emph{brace homomorphism} from a  brace algebra~$A$ to a  brace algebra~$B$ is a bracket-preserving linear map $f:\check A \to \check B$. Thus, $f$ should satisfy  $[f(x), f(y)]=f([x,y])$ and $[f(x), f(y), f(z)]=f([x,y,z])$ for all $x,y,z\in \check A$.

\begin{lemma}\label{Cbtlemyahhhmmmheor} Let $A$ be a commutative algebra  carrying a  skew-symmetric   2-brace
$[-,-]$  and a cyclically symmetric   3-brace  $[-, -,-]$.   If    \eqref{Jaco} holds for all elements of a generating set of~$A$, then it holds for all elements of~$A$.    
\end{lemma}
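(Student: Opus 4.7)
The plan is to introduce the trilinear map $J : A \times A \times A \to A$ defined by
\begin{equation*}
J(x,y,z) = [[x,y],z] + [[y,z],x] + [[z,x],y] - [x,y,z] + [y,x,z],
\end{equation*}
which measures the failure of \eqref{Jaco}. I will establish two key properties of $J$: first, that it is totally antisymmetric in its three arguments; second, that it is a derivation in each variable separately. Granted these, a standard three-fold iteration of the fact that a derivation of $A$ vanishing on a generating subset of $A$ vanishes on all of $A$ (recalling that any derivation sends $1$ to $0$) will yield the conclusion.

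The antisymmetry is the easy half. Cyclic symmetry of $J$ is inherited from the cyclic symmetry of both sides of \eqref{Jaco}. For antisymmetry under the transposition $x \leftrightarrow y$, the skew-symmetry of $[-,-]$ negates the Jacobiator, while $[x,y,z]-[y,x,z]$ visibly changes sign. Combined with cyclic symmetry, this promotes to total antisymmetry.

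The main step is to show that $x \mapsto J(x,y,z)$ is a derivation of $A$ for each fixed $y, z \in A$; the antisymmetry then automatically propagates the derivation property to the remaining two slots. The right-hand contribution $[x,y,z]-[y,x,z]$ is a derivation in $x$ because $[-,-,-]$ is a brace. For the Jacobiator term, I would expand $[[xx',y],z]$, $[[y,z],xx']$, and $[[z,xx'],y]$ by applying commutativity of $A$ together with the biderivation property of $[-,-]$ in each slot. After collection, the contributions of the form $x\cdot(\cdots) + (\cdots)\cdot x'$ reassemble into $x$ times the Jacobiator evaluated at $(x',y,z)$ plus the Jacobiator evaluated at $(x,y,z)$ times $x'$, while the remaining cross terms, which are products of the form $[x,\bullet]\,[x',\bullet]$, cancel in pairs by the skew-symmetry of $[-,-]$. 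This pairwise cancellation is the delicate point where I expect the main calculational obstacle to sit; it is routine but must be tracked carefully to verify that all six cross terms match up.

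With $J$ exhibited as a totally antisymmetric triderivation, the proof concludes in three passes. First, for $y, z \in S$ the derivation $J(-,y,z)$ vanishes on $S$, hence on $A$. Second, for arbitrary $x \in A$ and $z \in S$, the derivation $J(x,-,z)$ vanishes on $S$ by the previous step, hence on $A$. Third, for arbitrary $x, y \in A$, the derivation $J(x,y,-)$ vanishes on $S$, hence on $A$. This gives $J \equiv 0$ on $A^3$, which is the desired identity \eqref{Jaco}.
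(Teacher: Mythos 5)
Your proposal is correct and follows essentially the same route as the paper's proof: the paper likewise reduces to showing that the Jacobiator $L$ satisfies the Leibniz rule in one slot (it expands $[[x,y],zt]$, $[[y,zt],x]$, $[[zt,x],y]$, cancels the cross terms by skew-symmetry of $[-,-]$, and observes that $R=[x,y,z]-[y,x,z]$ inherits Leibniz from the 3-brace), then invokes cyclic symmetry to propagate from a generating set. Your packaging via the explicitly defined defect $J$, the observation that $J$ is in fact totally antisymmetric rather than merely cyclically symmetric, and the spelled-out three-pass iteration are minor organizational refinements of the same argument; I checked the cross-term cancellation you flagged as the delicate point, and it goes through exactly as you anticipated.
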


  \begin{proof}  Let $L(x,y,z)$ and  $R(x,y,z)$ be respectively the left  and the right  hand-sides of \eqref{Jaco}.   Since    $L(x,y,z)$ and $R(x,y,z)$ are linear in $x,y,z$ and cyclically symmetric, it suffices to verify the following: if  \eqref{Jaco} holds for the triples $x,y,z\in A$ and $x,y,t\in A$, then it holds for the triple $x, y, zt$.
  Since $[-,-]$ is a  brace, 
  $$ [[x,y],zt]  =  z  [[x,y],t] +  [[x,y],z] t, $$
   $$[[y,zt], x] =    [z [y,t],x] +[[y,z]t, x] $$ $$= z[  [y,t],x] +[z ,x][y,t]+ [y,z]   [t, x]  +[[y,z],x] t. $$
Similarly, 
 $$ [[zt,x], y] =   z[[t,x],y]+ [z,y][t,x] +[z,x][t,y]+ [ [z,x], y]t.$$
  Adding these three  expansions    and using the skew-symmetry of $[-,-]$, we get
    $$L(x,y,zt)= z L(x,y,t)+L(x,y,z)t. $$
Thus,~$L$  satisfies the Leibnitz rule  in the last variable.
 Since the   bracket $[-,-,-]$ also satisfies this rule, so does $R(x,y,z)=[x,y,z]-[y,x,z]$.  Consequently,   if  \eqref{Jaco} holds for the triples $x,y,z $ and $x,y,t $, then it holds for the triple   $x, y, zt$.
   \end{proof}


 Recall the  trace algebras $\{A_n^t\}_{n\geq 1}$ associated with any algebra~$A$.
 
\begin{theor}\label{Cbtheor}  For any brace algebra~$A$ and    integer  $n\geq 1$, there is a unique  brace algebra   structure on  $A_n^t$ such that    ${\rm {tr}}:  \check A  \to A^t_n$  is a brace homomorphism. 
\end{theor}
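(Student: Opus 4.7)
The plan is to lift the 2- and 3-brackets from $\check A$ to $A_n^t$ by invoking Lemma \ref{Cbeeeetlemmmmheor} twice, then to reduce the quasi-Jacobi identity on $A_n^t$ to the one already available on $\check A$, using Lemma \ref{Cbtlemyahhhmmmheor} to propagate it from a generating set. Uniqueness will then be immediate from the uniqueness clause already built into Lemma \ref{Cbeeeetlemmmmheor}.

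First I would construct the brackets on $A_n^t$. The 2-brace $[-,-]$ in $\check A$ is skew-symmetric and is a weak derivation in each variable, so Lemma \ref{Cbeeeetlemmmmheor} applied with $m=2$ and $\varepsilon=-1$ produces a unique skew-symmetric 2-brace $[-,-]_n$ in $A_n^t$ with $[\tr(x),\tr(y)]_n=\tr([x,y])$ for all $x,y\in \check A$. Similarly, applying Lemma \ref{Cbeeeetlemmmmheor} with $m=3$ and $\varepsilon=+1$ to the cyclically symmetric 3-brace $[-,-,-]$ produces a cyclically symmetric 3-brace $[-,-,-]_n$ in $A_n^t$ with $[\tr(x),\tr(y),\tr(z)]_n=\tr([x,y,z])$.

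Next I would verify the quasi-Jacobi identity \eqref{Jaco} for these two brackets on $A_n^t$. The algebra $A_n^t$ is commutative and both lifted brackets are braces, so Lemma \ref{Cbtlemyahhhmmmheor} reduces the problem to checking \eqref{Jaco} on a generating set of $A_n^t$, for instance $\tr(\check A)$. For $u=\tr(x)$, $v=\tr(y)$, $w=\tr(z)$ with $x,y,z\in \check A$, two iterations of the trace compatibility above yield
\[
[[u,v]_n,w]_n+[[v,w]_n,u]_n+[[w,u]_n,v]_n=\tr\bigl([[x,y],z]+[[y,z],x]+[[z,x],y]\bigr)
\]
and
\[
[u,v,w]_n-[v,u,w]_n=\tr\bigl([x,y,z]-[y,x,z]\bigr).
\]
Since \eqref{Jaco} holds in $\check A$ for $x,y,z$, the arguments of $\tr$ on the two right-hand sides agree, and therefore so do the left-hand sides.

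Uniqueness follows from the uniqueness statement in Lemma \ref{Cbeeeetlemmmmheor}: any brace algebra structure on $A_n^t$ for which $\tr$ is a brace homomorphism must carry $[-,-]$ and $[-,-,-]$ from $\check A$ to a skew-symmetric 2-brace and a cyclically symmetric 3-brace on $A_n^t$ along the trace, and each such lift is unique. The only step that requires genuine care is the invocation of Lemma \ref{Cbtlemyahhhmmmheor}; but its hypotheses (commutativity of $A_n^t$, both lifted brackets being braces, together with the correct symmetries) are all furnished by construction, so the identity \eqref{Jaco} really is forced from the generating set $\tr(\check A)$ to all of $A_n^t$, and no further obstacle arises.
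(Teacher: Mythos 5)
Your proposal is correct and follows essentially the same route as the paper: lift the two brackets via Lemma~\ref{Cbeeeetlemmmmheor} (which also gives the symmetries and uniqueness), check \eqref{Jaco} on the generating set ${\rm tr}(\check A)$ by pushing the identity through the trace, and invoke Lemma~\ref{Cbtlemyahhhmmmheor} to propagate it to all of $A_n^t$. Your writeup is a bit more explicit about the two trace identities on the generating set, but the argument is the same.
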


\begin{proof} 
     Let $[-,-]$ and $ [-,-,-]$ be the    brackets  in~$\check A$ forming a quasi-Lie pair.  By Lemma~\ref{Cbeeeetlemmmmheor}, there are unique       braces  $[-,-]^t$ and  $[-,-,-]^t$   in   $ A_n^t$  such that  $$[{\rm {tr}} (x), {\rm {tr}}(y)]^t={\rm {tr}} ( [x,y])\quad  {\rm {and}} \quad [{\rm {tr}} (x), {\rm {tr}}(y), {\rm {tr}} (z)]^t={\rm {tr}} ([x,y, z])$$ for all $x,y, z\in \check A$.   
Then    \eqref{Jaco} holds  for all  elements of the  set $ {\rm {tr}} (\check A) \subset  A^t_n$. Since this set generates $A^t_n$,  Lemma~\ref{Cbtlemyahhhmmmheor} implies that \eqref{Jaco} holds for all elements of $  A^t_n$. Also, since  $[-,-]$ is $(-1)$-symmetric and $ [-,-,-]$   is   $1$-symmetric, so are the braces   $[-,-]^t$ and   $[-,-,-]^t$. Thus, these braces form a quasi-Lie pair.           This turns  $A^t_n$  into a brace algebra satisfying the conditions  of the theorem. \end{proof}

 \subsection{Remark} A  bracket  in a module   is     fully symmetric  if it is invariant under all permutations of the variables. 
A  quasi-Lie pair of brackets    in a module~$M $  gives rise to  a fully symmetric 3-bracket $s:M^{  3}\to M$    by
\begin{equation}\label{Jacocbcb}s(x,y,z)= 2 [x,y,z] - [[x,y],z] - [[y,z], x]- [[z,x], y] \end{equation} 
for any $x,y,z\in M$. The cyclic symmetry of~$s$ is obvious  and the invariance of $s (x,y,z)$ under the permutation $x\leftrightarrow y$   follows from \eqref{Jaco}. 
Conversely, if 2 is invertible in~$R$, then   we can recover the 3-bracket from $[-,-]$  and~$s$ via \eqref{Jacocbcb}. Formula~\eqref{Jaco} follows then from the identity $s (x,y,z) =s(y,x,z)$.     This establishes a bijective correspondence between  quasi-Lie pairs of brackets in~$M$  and  pairs
(a skew-symmetric 2-bracket in~$M$, a fully symmetric 3-bracket in~$M$). 

  
\section{Topological gates}\label{A topological example}

We  define gates in    topological spaces and show how they give rise to braces.

\subsection{Gates}\label{topolsett} 
  A \emph{cylinder neighborhood} of a subset $C$ of a topological space~$X$  is   a  pair consisting of a closed  set $U\subset X$ with $C \subset U$ and  a homeomorphism  $ U\approx C\times [-1,1]$     carrying~$C $ onto  $C\times \{0\}$ and carrying ${\rm {Int}} (U)$ onto  $C\times (-1, 1)$. Note that then $C \subset  {\rm {Int}} (U)$ and~$C$ is closed in~$X$. A \emph{gate} in~$X$ is a path-connected  subspace $C \subset X$ endowed with a cylinder neighborhood in~$X$ and such that all loops in~$C$ are contractible in~$X$. An example of a gate is provided by a   simply connected  codimension 1   proper submanifold~$C$ of a manifold together with a suitable homeomorphism of a closed neighborhood of~$C$   onto $C\times [-1,1]$. 

For the rest of this section, we fix   a   path-connected topological space~$X$, a gate $  C\subset X$,  and its cylinder neighborhood $U\subset X$ which we identify with $ C\times [-1,1]$ so that $C =C\times \{0\}$. Pick a point $\ast  \in X \setminus U$ and  set
 $\pi=\pi_1(X, \ast)$.

\subsection{Gate derivatives} Here we       associate with the gate~$C$  an equivalence class of Fox derivatives   in the algebra $A =R[\pi]$. We start with preliminaries on (continuous) paths.  Let $q:X \to S^1=\{z\in \CC\, \vert \, \vert z\vert=1\}$ be the  map which   carries  $C\times \{t\} \subset U $ to ${\rm{exp} }(\pi i t)\in S^1$ for all $t\in [-1,1]$  and carries  $X \setminus U $ to $-1\in S^1$.
  We say that a path $a:[0,1]\to X$ is \emph{transversal to~$C$} if $a(0), a(1) \in X \setminus C$ and  the map $qa:[0,1] \to S^1$ restricted to $(0,1)$  is transversal to $1  \in S^1$. Then  $a^{-1}(C) =(qa)^{-1}(1)$ is a finite subset of $(0,1)$.  For a path~$a$, we denote the inverse path by~$\overline a$.  A path $a:[0,1]\to X$ is a \emph{loop based in} $\ast$ if $a(0)=a(1)=\ast$. Such a loop~$a$ represents an element of~$\pi$ denoted $[a]$. 
  
 Pick   a path $\gamma:[0,1]\to X$ such that $\gamma (0)=\ast$   and $\gamma ( 1) \in C$.
Consider a   loop $a:[0,1]\to X$ based in $\ast$ and  transversal to~$C$.  For   $t\in a^{-1}(C) \subset (0,1) $,  we  let $a_t^\gamma$ be the path  in~$X$
 obtained as the product of the path $a\vert_{[0,t]}$  with any path~$\beta$  in~$C$ from $a(t) $  to  $\gamma ( 1) $, and finally with  $ \overline \gamma$. Then $a_t^\gamma$ a loop    based in~$\ast$. Since all loops in~$C$ are contractible in~$X$, the homotopy class $[a_t^\gamma] \in \pi$ does not depend on the choice of~$\beta$. Set $\varepsilon_t(a)=1$ if at $a(t)\in C$ the loop~$a$ crosses~$C$ upwards (i.e., from $C\times [-1,0) $ to $C \times (0,1]$), and   $\varepsilon_t(a)=-1$ otherwise.
  Set 
  \begin{equation}\label{tweeotwo3} \partial_C^\gamma(a)= \sum_{t \in  a^{-1}(C)} \varepsilon_{t}(a) \, [a_t^\gamma] \in  R [\pi] =A.   \end{equation}

  \begin{lemma}\label{svvvvtrbucte}  Formula \eqref{tweeotwo3} defines a  map $\pi \to A$ whose linear extension $ A\to A$,  denoted $\partial_C^\gamma$,  is a Fox derivative.  If $\gamma':[0,1]\to X$ is another path from $ \ast$   to~$  C$, then the Fox derivatives  $\partial_C^\gamma$ and  $\partial_C^{\gamma'}$ are equivalent in the sense of Section~\ref{topdequivolsett}. 
\end{lemma}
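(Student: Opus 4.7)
The lemma contains three assertions: that $\partial_C^\gamma(a)$ is well defined on~$\pi$, that its linear extension satisfies the Fox rule, and that replacing~$\gamma$ by~$\gamma'$ changes $\partial_C^\gamma$ only by right-multiplication by a fixed element of~$\pi$. I would prove them in this order and expect the main difficulty to lie in the first.

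For well-definedness on~$\pi$, I would first note that every loop at~$\ast$ is homotopic rel~$\ast$ to a transversal one: the restriction of $q$ to $\Int U$ is a submersion onto $S^1\setminus\{-1\}$ with respect to the cylinder structure, so a standard transversality approximation applies. Given two transversal loops $a_0,a_1$ representing the same class and a homotopy $H:[0,1]^2\to X$ between them rel~$\ast$, I would perturb~$H$ so that $qH$ is transversal to $1\in S^1$ on the open square, making $L=H^{-1}(C)\subset[0,1]^2$ a compact 1-manifold with $\partial L\subset (0,1)\times\{0,1\}$ (the vertical edges map to $\ast\notin U$). Circles in~$L$ contribute nothing. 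An arc joining $(t,0)$ to $(t',1)$ gives a sign-matching bijection between the relevant intersection points of $a_0$ and of $a_1$, together with a homotopy rel~$\ast$ between $(a_0)_t^\gamma$ and $(a_1)_{t'}^\gamma$ obtained by pasting the two regions of $[0,1]^2\setminus L$ via~$H$ and using that $H$ sends the arc into~$C$. An arc with both endpoints on the same horizontal side produces a pair of contributions with opposite signs and equal classes (same argument), which cancel. Summing gives $\partial_C^\gamma(a_0)=\partial_C^\gamma(a_1)$.

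The Fox rule is then a bookkeeping computation. For $x=[a], y=[b]\in\pi$ with transversal representatives, the concatenation $a\cdot b$ is again transversal (its endpoints are at~$\ast\notin U$) and its intersections with~$C$ split into those inherited from the $a$-half, contributing $\partial_C^\gamma(a)$ directly, and those from the $b$-half, where the corresponding loop $(a\cdot b)_{\,\cdot}^\gamma$ is homotopic rel~$\ast$ to $a\cdot b_s^\gamma$ and hence represents $x\cdot[b_s^\gamma]$; summing the latter yields $x\,\partial_C^\gamma(b)$. Adding the two contributions gives the Fox identity.

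For the last assertion, I would pick any path $\delta$ in~$C$ from $\gamma(1)$ to $\gamma'(1)$ and set $g=[\gamma\cdot\delta\cdot\overline{\gamma'}]\in\pi$; contractibility of loops in~$C$ inside~$X$ makes $g$ independent of the choice of~$\delta$. For a transversal loop~$a$ and $t\in a^{-1}(C)$, choose the auxiliary path in~$C$ from $a(t)$ to $\gamma'(1)$ so as to factor through $\gamma(1)$ via~$\delta$; this exhibits $a_t^{\gamma'}$ as homotopic rel~$\ast$ to $a_t^\gamma\cdot(\gamma\cdot\delta\cdot\overline{\gamma'})$, whence $[a_t^{\gamma'}]=[a_t^\gamma]\,g$. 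Summing over~$t$ yields $\partial_C^{\gamma'}=\partial_C^\gamma\cdot g$, which is exactly the equivalence from Section~\ref{topdequivolsett}.
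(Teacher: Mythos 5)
Your proposal is correct and follows essentially the same route as the paper. The Fox-rule computation (splitting the intersections of $a\cdot b$ with~$C$ into those from the $a$-half and those from the $b$-half) and the change-of-basepath argument (setting $g=[\gamma\,\delta\,\overline{\gamma'}]$ and factoring $\beta'$ through $\delta$) are verbatim the paper's arguments. The only difference is in the well-definedness step, where the paper uses a "Cerf"/generic-homotopy sketch — a finite set of $u$'s at which a pair of oppositely-signed crossings is created or destroyed, with canceling contributions — while you use the equivalent but slightly more formal preimage-$1$-manifold argument, perturbing $qH$ to be transversal and analyzing the arcs of $L=H^{-1}(C)$ in $[0,1]^2$. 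Both are standard and equivalent; your version is somewhat more explicit, though the sign-matching claim along arcs of $L$ is itself asserted rather than spelled out, so neither writeup is fully rigorous on that point.
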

  
\begin{proof} It is clear that   all elements of~$\pi$  can be represented by loops based in $\ast$ and transversal to~$C$.  We claim that if two such  loops $a,a'$  are homotopic, then 
 $\partial_C^\gamma(a)=\partial_C^\gamma(a')$. There is a homotopy $(a_u)_{u\in [0,1]}$ from $a=a_0$ to $a'=a_1  $ such that the loop $a_u$ is based in~$\ast$ and transversal to~$C$ except for a finite set of $u\in (0,1)$ near which  the homotopy pushes  a branch of $a_u $  across~$C$ creating or destroying a pair of transversal  crossings   with~$C$.  It is easy to see that the contributions of these two crossings to 
$\partial_C^\gamma(a_u)$  cancel each other. Therefore,  Formula \eqref{tweeotwo3} yields a well-defined map $ \pi \to A$ which extends by linearity to a map $\partial_C^\gamma:A \to A$. 

If   $a,b$ are  loops in~$X$ based in~$\ast$ and transversal to~$C$, then so is their product,  and it follows directly  from the definitions that 
$\partial_C^\gamma(ab)= \partial_C^\gamma(a) + a \partial_C^\gamma(b)$. Consequently,  $\partial_C^\gamma$    is a Fox derivative in~$A$. 

Given two   paths $\gamma, \gamma'$ from~$\ast$ to~$C$, we let  $g \in \pi$ be the homotopy class of the loop  obtained as the product of  $\gamma$ with a path  in~$C$   from $\gamma (1) \in C $  to  $\gamma' (1) \in C $, and with  $\overline {\gamma'}$. It is easy to see  that  $\partial_C^{\gamma'}  = \partial_C^\gamma \cdot  g$. Thus,  the Fox derivatives  $\partial_C^\gamma$ and  $\partial_C^{\gamma'}$ are equivalent. \end{proof} 


 \subsection{Gate braces} \label{Gate brackets}   Let 
  ${\mathcal L} = {\mathcal L} (X)$  be  the set of free   homotopy classes of loops in~$X$  
 and let $ R{\mathcal L}$  be  the  free module with basis~${\mathcal L}$.  The   map $  \pi \to \mathcal L$ carrying  the homotopy classes of  loops to their free homotopy classes    induces  a bijection $\check \pi \approx  \mathcal L$ where $\check \pi$   is the set of conjugacy classes of elements of~$\pi$. 
  By Section~\ref{zzz},  
 $\check A=A/A'  = R\check \pi $ so that we can identify  
$\check A $ with   $R\mathcal L$.  

 By  Lemma~\ref{svvvvtrbucte}, a    sequence of $m\geq 1$ gates in $  X\setminus \{\ast\} $ (not necessarily disjoint or   distinct) determines a sequence  of $m\geq 1$ equivalence classes of Fox derivatives in~$A$. By   Section~\ref{Derivative brackets in group algebras}, the latter  induces     $m$-braces in the algebras~$A$ and   $\{A_n^t\}_{n\geq 1}$. 
 In particular,  the sequence of $m\geq 1$ copies of a gate $ C \subset X$ determines a cyclically symmetric  $m$-brace $\mu^m_C:{\check A}^{ m} \to \check A$ in~$A$. We   compute  $\mu^m_C$ in geometric terms as follows.
 Consider  $m$   loops $a_1,..., a_m: [0,1]\to X$ based in~$\ast$ and  transversal to~$C$. Pick a  point $\star \in C$.  For   $i=1,...,m$ and $t\in a_i^{-1}(C) \subset (0,1)$,   let $a_{i,t}$ be the loop based in $\star$ and obtained as the product of a  path $\beta_{i,t}$ in~$C$   from $\star$ to $a_i(t)\in C$, the loop  $a_i\vert_{[ t, 1]}\, a_i\vert_{[0,t]}$  based in $a_i(t)$, and the path $\overline {\beta_{i,t}}$.
   In the next  lemma, the free homotopy class of a loop~$b$ in~$X$ is denoted  $\langle b \rangle$.

\begin{lemma}\label{strleleucte}  Under the assumptions above, 
  \begin{equation}\label{twotwo3aa} \mu^m_C (\langle a_1\rangle , \ldots ,  \langle a_m\rangle) =
 \sum_{t_1\in a_1^{-1}(C), \ldots, t_m\in a_m^{-1}(C)} \prod_{i=1}^m \varepsilon_{t_i}(a_i) \langle \prod_{i=1}^m a_{i,t_i} \rangle \in R\mathcal L  .   \end{equation}
\end{lemma}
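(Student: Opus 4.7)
The plan is to identify $\mu^m_C$ as the $m$-brace produced by Theorem~\ref{sDDjkgfjfgjDtrbucte} applied to the Fox derivative $\partial=\partial_C^\gamma$ (for a chosen path $\gamma$ from $\ast$ to a point of~$C$) taken $m$ times, and then unwind the resulting formula in geometric terms. By Theorem~\ref{sDDjkgfjfgjDtrbucte} and Lemma~\ref{svvvvtrbucte},
$$\mu^m_C(\langle a_1\rangle,\ldots,\langle a_m\rangle)
= p\bigl(\Delta_{\partial}([a_1])\cdots \Delta_{\partial}([a_m])\bigr) \in \check A=R\mathcal{L},$$
where $[a_i]\in\pi$ is the based homotopy class of the loop $a_i$. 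Expanding the definition of $\partial=\partial_C^\gamma$ gives $\partial([a_i])=\sum_{t\in a_i^{-1}(C)} \varepsilon_t(a_i)\,[a^\gamma_{i,t}]$, whence
$$\Delta_{\partial}([a_i]) \;=\;\sum_{t\in a_i^{-1}(C)} \varepsilon_t(a_i)\, [a^\gamma_{i,t}]^{-1}\,[a_i]\,[a^\gamma_{i,t}].$$
Multiplying these out and distributing the sum over tuples $(t_1,\ldots,t_m)\in \prod_i a_i^{-1}(C)$ reduces the lemma to showing that each individual summand, once projected by $p$, yields $\prod_i \varepsilon_{t_i}(a_i)\,\langle \prod_i a_{i,t_i}\rangle$.

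The key geometric step is the identification, inside $\pi=\pi_1(X,\ast)$, of each conjugate $[a^\gamma_{i,t_i}]^{-1}[a_i][a^\gamma_{i,t_i}]$ with a based version of $a_{i,t_i}$. To do this I fix an auxiliary path $\delta$ in~$C$ from~$\star$ to $\gamma(1)$ and set $\eta=\delta\cdot\overline{\gamma}$, a path from $\star$ to $\ast$. Using the explicit description $a^\gamma_{i,t_i}=a_i|_{[0,t_i]}\cdot \beta \cdot \overline{\gamma}$ (with $\beta$ any path in~$C$ from $a_i(t_i)$ to $\gamma(1)$, e.g.\ $\beta=\overline{\beta_{i,t_i}}\cdot\delta$) and cancelling the appearances of $a_i|_{[0,t_i]}\cdot\overline{a_i|_{[0,t_i]}}$, a short manipulation rewrites the conjugate as
$$[a^\gamma_{i,t_i}]^{-1}\,[a_i]\,[a^\gamma_{i,t_i}] \;=\; [\,\overline{\eta}\cdot a_{i,t_i} \cdot \eta\,] \quad \text{in } \pi,$$
where $a_{i,t_i}$ is the loop based at $\star$ defined in the statement. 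All ambiguity in the choice of the paths in~$C$ (namely $\delta$, $\beta$, $\beta_{i,t_i}$) evaporates because loops in~$C$ are contractible in~$X$ by the gate hypothesis.

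Third, the $m$-fold product telescopes: since $\eta\cdot\overline{\eta}$ is null-homotopic,
$$\prod_{i=1}^m [a^\gamma_{i,t_i}]^{-1}[a_i][a^\gamma_{i,t_i}] \;=\; \bigl[\,\overline{\eta}\cdot (a_{1,t_1}\cdots a_{m,t_m})\cdot \eta\,\bigr] \in \pi.$$
Under the projection $p:A\to\check A=R\mathcal{L}$, which sends a based homotopy class to its free homotopy class, the conjugating path $\eta$ disappears, yielding $\langle\prod_i a_{i,t_i}\rangle\in\mathcal{L}$. Summing over $(t_1,\ldots,t_m)$ with the signs $\prod_i\varepsilon_{t_i}(a_i)$ gives formula~\eqref{twotwo3aa}.

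The only real obstacle is the bookkeeping in the second step: one must verify that every arbitrary choice of auxiliary path in~$C$ drops out, so that the conjugate $[a^\gamma_{i,t_i}]^{-1}[a_i][a^\gamma_{i,t_i}]$ indeed agrees with $\overline{\eta}\cdot a_{i,t_i}\cdot\eta$ on the nose in~$\pi$. This is a straightforward consequence of the gate hypothesis applied to the various sub-loops that appear, but writing it out cleanly requires carefully labelling the path segments. Everything else is a direct unwinding of the definitions.
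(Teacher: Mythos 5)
Your proposal captures the computational core of the paper's proof: expanding $\Delta_{\partial_C^\gamma}([a_i])$ as a signed sum of conjugates and passing through $p$ to discard the conjugating path $\eta$ (or $\gamma$) is exactly what the paper does in its second half, and your bookkeeping with $\eta=\delta\cdot\overline\gamma$ and the contractibility of loops in~$C$ is sound.

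However, you have skipped the first half of the paper's argument, and this is a real gap if the lemma is read as the paper's own proof reads it. Although the text of Section~\ref{Gate brackets} says ``loops based in~$\ast$,'' the paper's proof explicitly begins by treating $a_1$ as a loop \emph{not} based at~$\ast$: it shows that the right-hand side $\sigma(a_1,\ldots,a_m)$ of \eqref{twotwo3aa} is unchanged when $a_1$ is replaced by $b a_1\overline b$ for a path~$b$ transversal to~$C$ with $b(1)=a_1(0)$. This invariance is not automatic: conjugating by~$b$ creates new intersections of the loop with~$C$, and one must verify (as the paper does) that the two extra crossings introduced at each crossing of~$b$ with~$C$ carry opposite signs and yield homotopic loops $a'_{1,u}$, $a'_{1,w}$, so their contributions cancel. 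Only after this reduction can one assume all the $a_i$ are based at~$\ast$ and legitimately form $[a_i]\in\pi$ to apply the $\Delta_{\partial}$-formula. Your plan jumps straight to writing $[a_i]$, which presupposes the conclusion of that reduction. If you take the ``based in $\ast$'' wording at face value your argument is complete, but the paper's own proof shows the author intends the formula for arbitrary transversal loops, in which case you owe the cancellation argument establishing invariance of the right-hand side under base-point change.
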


\begin{proof}   We claim that both sides of \eqref{twotwo3aa} are preserved  when  the loop  $a_1$ is replaced by the loop  $a'_1=ba_1 \overline {b}$  for a  path $b:[0,1]\to X$  transversal to~$C$ and such that  $b(1)=a_1(0)$. The  invariance of the left-hand side  is obvious since $\langle a'_1\rangle=\langle a_1\rangle$. 
We prove the  invariance of the right-hand side which we denote by $\sigma (a_1,..., a_m)$.
If the path~$b$ misses~$ C $, then the claim    is obvious because  the loops $a_1$ and $a'_1$ meet~$C$ in the same points which contribute the same  to $\sigma (a_1,..., a_m)$ and $\sigma (a'_1,   ..., a_m)$. Otherwise, the path~$b$   expands as a product of a finite number of paths each of which is transversal to~$C$ and intersects~$C$ in  one point. Thus, it suffices to prove our claim in the case where~$b$ intersects~$C$ in  one point, say, $c$. Then   $a'_1=ba_1 \overline {b}$ meets~$C$  at  the crossings of $a_1$ with~$C$ and  two additional crossings at the point~$c$ which is  traversed first by~$b$ and then  by $\overline b$. Let $0<u <w <1$ be the corresponding values of the parameter, so that   $a'_1(u)=c=a'_1(w)$ 
are respectively  the first  and  the last crossings  of $a'_1$ with~$C$. It is  easy to see  that $\varepsilon_{u}(a'_1)= - \varepsilon_{w}(a'_1)$ and  the corresponding loops $a'_{1, u}$ and $a'_{1, w}$ are   homotopic. Therefore the terms
of $\sigma (a'_1,  ..., a_m)$  associated with $t_1=u$ and $t_1=w$ cancel each other, while the   remaing terms  yield $\sigma (a_1,..., a_m)$. Thus, $\sigma (a_1,..., a_m) = \sigma (a'_1,   ..., a_m)$.

Replacing   $a_1$ by $ba_1\overline b$ for  a path~$b$     transversal to~$C$ and  running from~$\ast$ to $a_1(0)$, we obtain a loop  transversal to~$C$ and  based in~$\ast$. By the previous paragraph, it suffices to prove   \eqref{twotwo3aa}  with this new loop instead of $a_1$. By a similar argument,  it suffices to prove   \eqref{twotwo3aa}  in the case where all the loops $a_1,..., a_m$ are based in~$\ast$.

Now, pick     a path $\gamma:[0,1]\to X$ from  $\gamma (0)=\ast$   to  $\gamma ( 1) =\star \in C$.
By the definition of the Fox derivative $\partial= \partial^\gamma_C$,  for any  $i=1,..., m$,  
$$ \partial ([a_i])= \sum_{t \in  a_i^{-1}(C)} \varepsilon_{t}(a_i) \, [\eta_{i,t}] \in  R [\pi] =A$$
where     $\eta_{i,t}= a_i\vert_{[0,t]} \, \overline {\beta_{i,t}} \,  \overline \gamma$. 
Therefore 
$$   \Delta_\partial  ([a_i])= \sum_{t \in  a_i^{-1}(C)} \varepsilon_{t}(a_i) \,
[ \,   \overline {\eta_{i,t}} \, a_i \, \eta_{i,t} \,  ] =\sum_{t \in  a_i^{-1}(C)} \varepsilon_{t}(a_i) \,
[ \,   \gamma  {\beta_{i,t}} \overline {a_i\vert_{[0,t]}} \, a_i \,   a_i\vert_{[0,t]} \, \overline {\beta_{i,t}} \,  \overline \gamma \,  ] $$ $$=\sum_{t \in  a_i^{-1}(C)} \varepsilon_{t}(a_i) \,
[ \,   \gamma  {\beta_{i,t}}  a_i\vert_{[t,1]}    \,   a_i\vert_{[0,t]} \, \overline {\beta_{i,t}} \,  \overline \gamma \,  ] =\sum_{t \in  a_i^{-1}(C)} \varepsilon_{t}(a_i) \,
[ \,   \gamma  a_{i,t}   \overline \gamma \,  ] .$$
Setting $x_i=\langle a_i \rangle \in \check \pi$ for $i=1,..., m$ and substituting in Formula \eqref
{cccdmm} the above expression for $ \check  \Delta_\partial  (x_i)  = \Delta_\partial  ([a_i])$, we obtain a formula equivalent to \eqref{twotwo3aa}.
 \end{proof}

\subsection{The dual map}\label{The dual map} The  gate $C\subset X$ determines 
a linear map $v_C: H_1(X;R)\to R$ \lq\lq dual'' to~$C$. This map carries the homology class $[[a]]\in H_1(X; R)$ of a  loop $a:[0,1]\to X$ transversal to~$C$  to   $\sum_{t\in a^{-1}(C)} \varepsilon_t (a)$.  It is clear that the sum of the coefficients of  the expression \eqref{twotwo3aa} is equal to
$   \prod_{i=1}^m v_C ([[a_i]])$. 


\section{Quasi-surfaces}\label{section2-}

\subsection{Generalities}\label{Terminology}\label{notat} By  a   \emph{surface} we mean  a  smooth 2-dimensional manifold  with   boundary.  
A \emph{quasi-surface} is a  topological space~$X$ obtained by gluing a   surface~$\Sigma$ to a topological space~$Y$ along a continuous map   $ f: \alpha  \to Y$ where $ \alpha \subset \partial \Sigma $ is a union of a finite number of    disjoint  segments  in $\partial \Sigma$. 
Note that   $Y\subset X$ and   $X\setminus Y =\Sigma\setminus \alpha$.  Here   we impose no conditions on~$Y$ and do not require~$\Sigma$ to be compact or connected or  maximal among   surfaces in~$X$.

  The quasi-surface~$X$ has path-connected components of 3 types: (i) components of~$\Sigma$ disjoint from~$\alpha$; (ii) path-connected  components of~$Y$ disjoint from  $f(\alpha)\subset Y$; (iii) path-connected  components of~$X$ meeting both~$\Sigma$ and $Y$.  For components of type (i) our results below are standard in the  topology of  surfaces. For components of type (ii),   all our operations are identically  zero. 
 The novelty  of this work concerns the components of type (iii).

\subsection{Examples}\label{Exampleskk} In the following   examples, $\Sigma$ is a surface. 

 1.  When    $\alpha$ is a family of $m\geq  1$ disjoint segments   in $\partial   \Sigma$,  the unique map from~$\alpha  $ to a 1-point space  determines   a   quasi-surface. For $m=1$, it    is a  copy of~$\Sigma$. As  a consequence, any surface with non-void boundary is a quasi-surface.

2. Given $m\geq 1$  disjoint finite subsets of $\partial \Sigma$, we  obtain  a quasi-surface by collapsing each of these subsets into a point.   Here $Y$ is an $m$-point set  with discrete topology and~$\alpha$ is a  small  closed neighborhood  in $\partial \Sigma$
  of the union of our  finite   sets.

3. Given  $m \geq 1$ disjoint   segments $\alpha_1,..., \alpha_m$  in  $  \partial \Sigma$ and~$m $ points $y_1,..., y_m$ in a topological space~$Y$,    we obtain a quasi-surface   by    gluing~$\Sigma$ to~$Y$ along the map carrying $\alpha_k$ to $y_k$ for  $k=1,...,m $.  

4.  Let $\Sigma_0$ be a surface  with boundary and let~$\alpha\subset \Sigma_0$ be  a  union of a finite number  of    disjoint  proper  embedded segments in~$\Sigma_0$. Suppose that~$\alpha$ splits~$\Sigma_0$ into   two  subsurfaces  (possibly disconnected)  $\Sigma\subset \Sigma_0 $ and $\Sigma'\subset \Sigma_0$  so that  $\alpha=\Sigma \cap \Sigma'=\partial \Sigma \cap \partial \Sigma'$. Then  $\Sigma_0$  is homeomorphic to the quasi-surface determined by the tuple~$(\Sigma, \alpha \subset \partial  \Sigma, Y=\Sigma',f)$   where   $f:\alpha \to \Sigma'  $ is    the inclusion.

 \subsection{Conventions}\label{Ternnnminology}
   Fix for the rest of the paper a tuple $X, Y, \Sigma, \alpha , f$   as in Section~\ref{Terminology}.  We    assume that~$X$ is path-connected, $\alpha \neq \emptyset$,  and~$\Sigma$ is oriented.
 We will  
  identify a closed    neighborhood  of~$\alpha$ in~$\Sigma$   with $\alpha \times [-2,1]$ so that 
  $\alpha=\alpha \times \{-2\} $ and  
  $$ \partial \Sigma  \cap (\alpha \times [-2,1])=(\alpha \times \{-2\} )  \cup (\partial \alpha \times [-2,1]).$$  We will often use   the surface
$$\Sigma'=\Sigma \setminus  (\alpha \times [-2,0)) \subset \Sigma \setminus \alpha \subset X   $$ which is  a  copy of $\Sigma$    embedded in~$X$. It is   called the \emph{surface part} of~$X$.   We   provide~$\Sigma'$  with the   orientation induced  from that of~$\Sigma$.

   For    $k\in  \pi_0(\alpha)$,      denote by    $ \alpha_k^\circ $ the corresponding   component of $\alpha\subset \partial \Sigma$. Set   $$\alpha_k=\alpha_k^\circ \times \{0\} \subset  \partial  \Sigma'  \subset X. $$  
Clearly, $\alpha_k$ is an  embedded segment in~$X$.    Endowing $\alpha_k$  with  the cylinder neighborhood $\alpha_k^\circ \times [-1,1] \subset     \Sigma \setminus \alpha \subset X $   we turn $\alpha_k$ into  a gate in~$X$ in the sense of Section~\ref{topolsett}. This is  the  \emph{$k$-th gate} of~$X$.
 The gates $\{\alpha_k\}_k$    split~$X$ into the  surface part  $\Sigma'$  and the \emph{singular part}
 which is   the mapping cylinder of the gluing map $f:\alpha\to Y$. All     paths from a point of $ \Sigma' $ to a point of $X \setminus \Sigma'$   have  to cross a  gate.

 \subsection{Gate orientations}\label{koko}\label{++kokoff}\label{Notation andww terminology}     A   \emph{gate orientation}  of~$X$     is   an orientation of all the gates $\{\alpha_k\}_k$ of~$X$. Gate orientations of~$X$  canonically correspond to orientations of the 1-manifold $\alpha \subset \partial \Sigma$.
Given a gate  orientation~$\omega$ of~$X$ and      points $p,q \in  \alpha_k$,  we  say that~$p$ \emph{lies on  the $\omega$-left of~$q$} and~$q$ \emph{lies on  the $\omega$-right of~$p$}  if $p\neq q$ and  the $\omega$-orientation  of $\alpha_k$   leads from~$p$ to~$q$. We write then  $p<_\omega q$ or $q>_\omega p$. 
We   set  $\varepsilon(\omega, k)=+1$ if the $\omega$-orientation of~$\alpha_k$ is compatible with the orientation of~$\Sigma$, i.e.,  if the pair (a $\omega$-positive tangent vector  of  $\alpha_k \subset \partial \Sigma'$, a   vector  directed inside~$\Sigma'$) is  positively oriented in~$\Sigma$. Otherwise,   $\varepsilon(\omega, k)=-1$.  Also,  we let $k\omega$ be the gate orientation   obtained from~$\omega$ by inverting the direction of~$ \alpha_k$ while  keeping  the directions of the other gates.   We
let   $\overline \omega$ denote   the  gate orientation of~$X$   opposite to~$\omega$ on all  gates.

\subsection{Generic loops}\label{koko}\label{kokoff}\label{koko89}\label{moves}    
In the rest of the paper by a loop in~$X$ we mean a circular loop, i.e.,   a continuous map  $a:S^1\to X$. 
The intersection of the set $a(S^1)$ with the $k$-th gate $\alpha_k\subset X$ is denoted   $a\cap \alpha_k$.  A \emph{generic loop}~$a$ in~$X$ is  a   loop   in~$X$ such that (i)  all  branches of~$a$   in~$\Sigma'$ are smooth immersions meeting $\partial \Sigma'$ transversely   at a finite set of points lying  in the  interior  of the gates, and (ii)  all self-intersections of~$a$ in $\Sigma'$ are double transversal intersections lying in $\Int(\Sigma')=\Sigma' \setminus \partial \Sigma'$. The set of self-intersections in $\Sigma'$  (= double points) of a generic loop~$a$   is denoted by   $\# a $. This set is finite and  lies in $\Int(\Sigma')$.

A generic loop~$a$ in~$X$  never traverses a point of a gate  $\alpha_k$   more than once, and the set $a\cap \alpha_k$ is finite.    The \emph{sign} $\varepsilon_p(a)$ of~$a$ at a  point $p\in a\cap \alpha_k$ is    $+1$ if~$a$ goes near~$p$ from $X\setminus \Sigma'$ to $\Int (\Sigma')$ and $ -1$ otherwise.

We define six  local  moves $L_0 - L_5$  on  a  generic loop~$a$  in~$X$  keeping its  free homotopy class. The move $L_0$ is a deformation of~$a$  in the class of generic loops. This move preserves the number $\card(\# a)$.
  The   moves $L_1 -  L_3$   modify~$a$  in a small disk in $\Int(\Sigma')$ and  are modeled on the   Reidemeister moves on knot diagrams (with   over/under-data dropped).   The   move $L_1$ adds a small curl to~$a$ and increases $\card(\# a)$  by~$1$. The  move $L_2$ pushes a branch of~$a$ across another branch of~$a$ increasing $\card(\# a)$  by~$2$.  The  move $L_3$ pushes a branch of~$a$  across a double point  of~$a$ keeping $\card(\# a)$. The  move $L_4$ pushes a branch of~$a$ across  a gate   keeping  $\card(\# a)$. The  move  $ L_5$  pushes a double point of~$a$ across a gate decreasing   $\card(\# a)$  by  $ 1$.  Graphically, the moves $L_4, L_5$ are   similar  to $ L_2, L_3$. 
We call the moves $L_0 - L_5$ and their inverses   \emph{loop moves}.  It is clear  that   generic loops in~$X$ are freely homotopic  if and only if  they can be related  by a finite sequence of loop moves.

   A finite family of   loops in~$X$    is  \emph{generic}  if all  these loops  are generic and all  their mutual crossings   in~$\Sigma'$ are double transversal intersections in $\Int(\Sigma')$. In particular,   these loops  can  not meet at the gates. 
We will use the following notation.  For  generic   loops $a,b$ in~$X$, consider the  set of triples   
$$T(a,b)=\{(k,p,q) \, \vert \, k\in \pi_0(\alpha), p\in a\cap \alpha_k, q\in b\cap \alpha_k, p\neq q \}.$$
Given a gate orientation~$\omega$ of~$X$, we define a  set $T_\omega (a,b )\subset T(a,b)$ by  
$$T_\omega (a,b)=\{(k,p,q) \in T(a,b) \, \vert \, q<_\omega p  \}.$$ Clearly, 
$$ T(a,b) \setminus  T_\omega (a,b)  =\{(k,p,q) \in T(a,b) \, \vert \, p <_\omega q  \}= T_{\overline \omega} (a,b) .$$


\section{Homological  intersection  forms}\label{section2}

As a prelude to more sophisticated  operations, we define   here   intersection forms in 1-homology of~$X$.




\subsection {First homological intersection  form}\label{fofo}  
Given a gate orientation~$\omega$ of~$X$, we   define   a   bilinear form 
\begin{equation}\label{action}
\cdot_\omega: H_1(X;R)\times H_1(X;R) \to R
\end{equation}    called the \emph{first homological intersection  form of~$X$}.   The idea  is to properly  position the loops    near the gates and then  to count  intersections of the  loops in the surface part $\Sigma' \subset X$ of~$X$  with   signs.
We say that an (ordered)  pair  of   loops $a,b$   in~$X$  is     \emph{$\omega$-admissible}  if this pair  is generic   and $T_\omega (a,b)=\emptyset$ so that the crossings of~$a$ with every gate  lie on the $\omega$-left of the  crossings of~$b$ with this gate.   Taking  a   generic pair of  loops $a,b$  in~$X$ and pushing the branches of~$a$  crossing the gates to the $\omega$-left  and  pushing the branches of~$b$ crossing the gates to the $\omega$-right, we obtain an  $\omega$-admissible pair of loops (possibly, with more crossings than the initial pair).
Thus,  any pair of loops in~$X$ may be deformed into an $\omega$-admissible pair. 

For a generic pair   of loops $a,b$ in~$X$,  the set    of   crossings of~$a$ with~$b$ in~$\Sigma'$  is  a  finite subset of  $ \Int(\Sigma')$ denoted   $a\cap b$.  For a  point   $r \in a\cap b$,   set $\varepsilon_r  (a,b) =  1$ if  the (positive) tangent vectors of~$a$ and~$b$ at~$r$ form an   $\omega$-positive basis in the tangent space of~$\Sigma'$ at~$r$ and  set  $\varepsilon_r   (a,b)=-1$ otherwise.


\begin{lemma}\label{1alele}   For any $\omega$-admissible pair $a,b$ of   loops in~$X$,   the \lq\lq crossing number"  \begin{equation}\label{actionqq} a \cdot_\omega b =  \sum_{r\in a\cap b} \varepsilon_r  (a,b) \in R \end{equation} depends only on  the homology classes of $a,b$ in  $ H_1(X; R)$. The formula $(a,b) \mapsto  a \cdot_\omega b$ defines a bilinear form \eqref{action}. 
\end{lemma}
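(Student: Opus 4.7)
The plan is to prove the crossing number depends only on the homology classes $[a], [b] \in H_1(X; R)$; bilinearity is then clear from construction. My strategy is to first establish invariance under the loop moves $L_0$--$L_5$ on each of $a, b$ while maintaining $\omega$-admissibility, and then to promote this free-homotopy invariance to invariance on $H_1$ by additivity.

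For the loop-move invariance, I go through each move. Moves $L_0$--$L_3$ take place in a small disk $D \subset \Int(\Sigma')$, and their analysis is the standard Reidemeister calculus for the signed count of $a \cap b$: $L_0$ is a transverse isotopy; $L_1$ adds a curl contributing only self-crossings of $a$; $L_2$ adds two self-crossings of $a$ that do not interact with $b$; $L_3$ leaves the signed count of $a \cap b$ unchanged even when a branch of $b$ crosses the triangle, by the usual Reidemeister-III sign cancellation. Moves $L_4$ and $L_5$ occur at the gates. Since $b$ does not meet the gates, the set $a \cap b$ is unaffected directly. However, $L_4$ may introduce new crossings of $a$ with $\alpha_k$ on the $\omega$-right of the crossings of $b$ with $\alpha_k$, violating $\omega$-admissibility. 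I restore admissibility by sliding the new $a$-crossings along $\alpha_k$ to the $\omega$-left of $b$'s crossings, performed in a thin collar of $\alpha_k$ inside $\Sigma'$. Each elementary slide of a single $a$-branch past a single $b$-branch inserts a pair of new points in $a \cap b$ whose signs $\varepsilon_r(a, b)$ are opposite (the $a$-branch enters and exits the thin collar across the $b$-branch), so they cancel in the sum.

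To descend from free-homotopy classes to $H_1(X; R)$, I extend the crossing number $R$-bilinearly to finite $R$-linear combinations of free homotopy classes of loops, and use that $H_1(X; R)$ is the abelianization of $\pi_1(X, \ast)$ tensored with $R$, i.e., generated by free homotopy classes modulo the relations $[\alpha \beta] = [\alpha] + [\beta]$ for based loops $\alpha, \beta$ (conjugation invariance being automatic). A generic $\omega$-admissible representative of $\alpha\beta$ can be taken as the concatenation of generic $\omega$-admissible representatives of $\alpha$ and $\beta$ at the basepoint, so its intersections with $b$ decompose as the disjoint union of the two individual intersection sets, giving additivity. Bilinearity on $H_1(X; R) \times H_1(X; R)$ is then immediate. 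The main obstacle I foresee is the analysis of the gate moves $L_4, L_5$: verifying that the admissibility-restoring slides along the gates produce only cancelling pairs of $a \cap b$ crossings requires careful local tracking of the sign conventions $\varepsilon(\omega, k)$, $\varepsilon_p(a)$, and $\varepsilon_r(a, b)$ together with the orientations of $\alpha_k$ and $\Sigma'$.
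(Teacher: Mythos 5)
Your plan is in the same spirit as the paper's (reduce to homotopy invariance of the signed count in the surface part, then descend to homology by additivity) but uses a genuinely different mechanism. The paper never needs admissibility‑restoring slides: it fixes disjoint subsegments $\alpha_k^-$ and $\alpha_k^+$ of each gate containing its two endpoints, isotopes $a$ to an ``$\omega$-left'' loop meeting the gates only in $\cup_k\alpha_k^-$ and $b$ to an ``$\omega$-right'' loop meeting them only in $\cup_k\alpha_k^+$, and observes that within those classes homotopies never interfere at the gates, so $\omega$-admissibility is automatic throughout and the crossing number is the ordinary signed intersection count in the oriented surface $\Sigma'$. Your route --- loop moves $L_0$--$L_5$ plus slides to restore admissibility after a gate move --- is plausible but the key step is not established.

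The difficulty is precisely where you flag it, in the analysis of $L_4$ (and $L_5$, which you do not treat at all). The assertion that ``each elementary slide of a single $a$-branch past a single $b$-branch inserts a pair of new points in $a\cap b$ whose signs $\varepsilon_r(a,b)$ are opposite'' is not correct as stated: a single branch of $a$ is an arc entering $\Sigma'$ at one point of a gate, and sliding its gate endpoint past a $b$-branch (with the inner endpoint of the arc held fixed) creates one new transversal crossing in the collar, not a cancelling pair. The cancellation that actually makes the argument go is different and needs to be set up: $L_4$ produces two new gate crossings $p_1,p_2$ of $a$, adjacent on $\alpha_k$ and with opposite signs $\varepsilon_{p_1}(a)=-\varepsilon_{p_2}(a)$, and they lie on the same side of every $b$-crossing $q$ because the disk supporting the move is disjoint from $b$. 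Sliding \emph{both} past $q$ either creates and then annihilates a single crossing, or (if you move the two legs one at a time with fixed inner endpoints) creates two crossings whose $\varepsilon_r(a,b)$-signs you would then have to verify are opposite using the orientation of $\Sigma'$ and the signs $\varepsilon_{p_i}(a)$, $\varepsilon_q(b)$. That sign bookkeeping is exactly the content of the lemma and you have not done it. Relatedly, the opening ``Since $b$ does not meet the gates'' is false (an $\omega$-admissible $b$ typically crosses every gate); what is true is only that the local disk of the move is disjoint from $b$. Finally, note that the single-loop moves $L_0$--$L_5$ do not by themselves exhaust the homotopies relevant here: a homotopy of $a$ that pushes a branch of $a$ across a branch of $b$ in $\Int(\Sigma')$ is not an $L_2$ move on $a$ (the disk would have to be disjoint from $b$), and it changes $a\cap b$ by a cancelling pair --- this is yet another case that must be mentioned. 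The paper's $\omega$-left/$\omega$-right reduction sidesteps all of these case analyses at once, which is why it is the shorter proof.
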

\begin{proof} For each $k\in \pi_0(\alpha)$,    one  endpoint  of  the gate $  \alpha_k$ lies on the $\omega$-left of the other  endpoint. Pick disjoint closed  segments  $\alpha_k^- \subset \alpha_k $ and $\alpha_k^+\subset \alpha_k $     containing  these  two endpoints respectively.  Clearly,   $p<_\omega q$ for all $p\in \alpha_k^- $ and $q\in \alpha_k^+$. 
We  say that a loop  in~$X$ is \emph{$\omega$-left} (respectively, \emph{$\omega$-right}) if it is generic and meets the gates of~$X$ only  at points of $\cup_k \alpha^-_k$ (respectively, of $\cup_k \alpha^+_k$). Given an $\omega$-admissible pair of loops $a,b$ in~$X$, we can  push the branches of~$a$ crossing the gates to the left and  push the branches of~$b$ crossing the gates to the right without creating or destroying   intersections between~$a$ and~$b$. Consequently,~$a$ is homotopic (in fact, isotopic) to an $\omega$-left loop $a'$ and~$b$ is homotopic to an $\omega$-right loop $b'$ such that  $a \cdot_\omega b=a' \cdot_\omega b'$. Since $\alpha^-_k $ is a  deformation retract  of   $\alpha_k  $ for all~$k $, any $\omega$-left loops     homotopic    in~$X$ are homotopic in the class of $\omega$-left loops in~$X$. Similarly, any   $\omega$-right loops     homotopic   in~$X$ are homotopic in the class of $\omega$-right loops in~$X$. Such homotopies  of $a', b'$ obviously preserve   $a' \cdot_\omega b'$. Therefore  the  number $a \cdot_\omega b=a' \cdot_\omega b'$ depends only on the (free)  homotopy classes of  $a,b$ in~$X$. Moreover, since this   number   linearly depends on both loops~$a$ and~$b$,  it depends only on their homology classes. This implies the  claim of the lemma.
\end{proof}


We emphasize that the crossings of   loops  in  $  X\setminus \Sigma'$ do not contribute to the crossing number. For loops   in the surface $\Sigma' \subset X$, the crossing number is the usual homological intersection  number. The crossing numbers of  loops  in $X\setminus \Sigma'$ with arbitrary loops in~$X$ are equal to zero.

For an  $\omega$-admissible pair $a,b$ of loops in~$X$, the   pair $b,a$ is $\overline\omega$-admissible. 
 Using these pairs to compute   $ a \cdot_\omega b  $ and $  b  \cdot_{\ \overline \omega} a$, we obtain two sums which differ only in the signs of the  terms. 
Hence, for any $x,y \in H_1(X;R)$,   \begin{equation}\label{chorisdf}    x \cdot_\omega y    = - y \cdot_{\ \overline \omega} x    .\end{equation}


\begin{lemma}\label{1aeee}    For any homology classes $x,y \in H_1(X;R)$ represented by a generic pair of loops $a,b$ in~$X$,
\begin{equation}\label{chorieemvmmve}
 x \cdot_{\omega}  y   =  \sum_{r\in a\cap b } \varepsilon_r   (a,b) +\sum_{(k,p,q) \in T_{\omega} (a,b)}\,    \varepsilon(\omega, k)   \,\varepsilon_p(a)  \,\varepsilon_q(b). \end{equation}
\end{lemma}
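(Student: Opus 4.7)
My plan is to prove the lemma by induction on $N=\card T_\omega(a,b)$, reducing to the $\omega$-admissible case already settled by Lemma~\ref{1alele}. When $N=0$, the pair $(a,b)$ is $\omega$-admissible, $T_\omega(a,b)=\emptyset$, and \eqref{chorieemvmmve} is exactly the defining formula \eqref{actionqq}. For $N\geq 1$, I will construct a free homotopy of $a$ that decreases $N$ by one, adds exactly one new signed intersection to $\sum_{r\in a\cap b}\varepsilon_r(a,b)$, and removes exactly one term from the $T_\omega$ sum, in such a way that the two changes cancel. Since $x\cdot_\omega y$ depends only on homology classes, it is invariant under such a homotopy, and the lemma will follow by induction.

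To select the homotopy, I fix a gate $\alpha_k$ on which the restriction of $T_\omega(a,b)$ is nonempty, and take $p\in a\cap\alpha_k$ to be the $\omega$-leftmost $a$-crossing on $\alpha_k$ that has some $b$-crossing to its $\omega$-left. The crossing immediately to the $\omega$-left of $p$ on $\alpha_k$ is then forced to be a $b$-crossing $q$: an $a$-crossing $p''$ in that position would have the same $b$-crossing to its left, contradicting the choice of $p$. I then perform an isotopy of $a$ supported in the cylinder neighborhood $\alpha_k^\circ\times[-1,1]$ of $\alpha_k$ and in a small disk $D\subset\Sigma'$ containing the short $\omega$-interval on $\alpha_k$ from $q$ to $p$, sliding the gate crossing at $p=(s_p,0)$ a tiny distance to a new location $(s_p',0)$ just to the $\omega$-left of $q$. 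Taking the support small enough, no other gate crossings of $a$ or $b$ are disturbed, and the only change to $T_\omega(a,b)$ is removal of the triple $(k,p,q)$.

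The delicate step is to verify that this isotopy creates exactly one new intersection point $r$ of $a$ with $b$ in $\Int(\Sigma')$ with the asserted sign. Choose local coordinates $(s,t)$ on the cylinder neighborhood with $s$ in the $\omega$-positive direction along $\alpha_k^\circ$, $t=0$ the gate, and $t\geq 0$ the side inside $\Sigma'$. Inside $D$, the new branch of $a$ runs between $(s_p',0)$ and the point $(s_p,\delta)$ where it rejoins the unchanged portion of $a$ (for some small $\delta>0$); up to positive rescaling its tangent at the unique new crossing point $r$ is $\varepsilon_p(a)\cdot(s_p-s_q,\delta)$. The tangent of $b$ at $r$ is $\varepsilon_q(b)\cdot(0,\delta)$ up to positive rescaling. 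A $2\times 2$ determinant computation in the frame $(ds,dt)$ yields sign $\varepsilon_p(a)\varepsilon_q(b)$. Since the orientation of $\Sigma$ differs from $ds\wedge dt$ by the factor $\varepsilon(\omega,k)$, the sign $\varepsilon_r(a,b)$ in the orientation of $\Sigma'$ equals $\varepsilon(\omega,k)\varepsilon_p(a)\varepsilon_q(b)$, precisely balancing the term lost from the $T_\omega$ sum. The main obstacle is this local sign verification, which must be checked uniformly across all four sign combinations of $(\varepsilon_p(a),\varepsilon_q(b))$; the rest is routine bookkeeping.
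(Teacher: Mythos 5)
Your proof is correct and takes essentially the same approach as the paper: the paper pushes all the branches of~$a$ meeting the gates to the $\omega$-left of those of~$b$ in a single step, producing one new crossing in $\Sigma'$ per triple in $T_\omega(a,b)$, whereas you do the same push one triple at a time by induction on $\card T_\omega(a,b)$. Your explicit local determinant verification of the sign $\varepsilon_r(a',b')=\varepsilon(\omega,k)\,\varepsilon_p(a)\,\varepsilon_q(b)$ simply fills in a detail that the paper asserts by inspection.
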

\begin{proof}  Consider an $\omega$-admissible pair of  loops $a',b'$  obtained from $a,b$ by pushing the branches of~$a$ meeting the gates to the $\omega$-left of the branches of~$b$ meeting the   gates.  This   modifies~$a$ in a small neighborhood of the gates; we  can assume that   $a', b'$ have the same intersections in $\Sigma'$ as $a,b$ plus one  additional intersection  $r=r(k,p,q) \in \Sigma'$   for each  triple $(k,p,q)\in T_\omega (a,b)$. Observe that  $\varepsilon_r  (a', b')= \varepsilon(\omega, k) \, \varepsilon_p(a)\,  \varepsilon_q(b)$.  Consequently,    $$
x \cdot_{\omega}  y  =a' \cdot_\omega b'=\sum_{r\in a'\cap b'} \varepsilon_r  (a',b') $$ 
$$=\sum_{r\in a\cap b} \varepsilon_r  (a,b)+\sum_{(k, p,q) \in T_{\omega} (a,b)} \varepsilon(\omega, k) \, \varepsilon_p(a)\,  \varepsilon_q(b).$$
\end{proof}

 Formula \eqref{chorieemvmmve} generalizes   \eqref{actionqq} because $T_\omega (a,b)=\emptyset$   for an  $\omega$-admissible pair  of loops $a,b$. We  describe next the dependence of $\cdot_\omega$ on~$\omega$. We will  use the linear map $v_k: H_1(X;R)\to R$ \lq\lq dual'' to the  $k$-th gate. This map carries the homology class of any generic loop~$a$ to $\sum_{p\in a \cap \alpha_k} \varepsilon_p(a)$.  
 In the notation of Section~\ref{The dual map}, $v_k=v_{\alpha_k}$.


\begin{theor}\label{1a}    For any $x,y\in H_1(X;R)$ and   $k_0\in \pi_0(\alpha)$,
\begin{equation}\label{chori}
  x \cdot_{k_0\omega}  y   =   x \cdot_{\omega} y   - \varepsilon(\omega, k_0) \,  v_{k_0}(x) \,  v_{k_0}(y). \end{equation}
\end{theor}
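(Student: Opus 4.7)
The plan is to use the formula \eqref{chorieemvmmve} from Lemma~\ref{1aeee} to compute both $x\cdot_\omega y$ and $x\cdot_{k_0\omega} y$ from a single generic pair of loops $a,b$ representing $x,y$ and then subtract.

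First I would represent $x=[[a]]$ and $y=[[b]]$ where $(a,b)$ is a generic pair of loops in $X$. Since the pair is generic, $a$ and $b$ do not meet at the gates, so every triple $(k,p,q)$ in $T(a,b)$ satisfies $p\neq q$ automatically. I would then observe that reversing the orientation of the single gate $\alpha_{k_0}$ has two effects: (i) for $k\neq k_0$, the subset of triples in $T(a,b)$ with first coordinate $k$ that lie in $T_\omega(a,b)$ is identical to the one that lies in $T_{k_0\omega}(a,b)$, and the sign $\varepsilon(\omega,k)$ is unchanged; (ii) for $k=k_0$, the relations $<_\omega$ and $<_{k_0\omega}$ on $\alpha_{k_0}$ are swapped, and $\varepsilon(k_0\omega,k_0)=-\varepsilon(\omega,k_0)$.

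Consequently, when I subtract the two expressions given by \eqref{chorieemvmmve}, the surface-part intersection sum $\sum_{r\in a\cap b}\varepsilon_r(a,b)$ cancels, as do all terms with $k\neq k_0$. Writing $T^{k_0}_\omega$ for the triples in $T_\omega(a,b)$ with first coordinate $k_0$, and similarly for $k_0\omega$, and using the fact that  $T^{k_0}_\omega$ and $T^{k_0}_{k_0\omega}$ partition the set $\{(k_0,p,q): p\in a\cap\alpha_{k_0},\ q\in b\cap\alpha_{k_0}\}$ (with $p\neq q$ automatic), I obtain
\begin{align*}
x\cdot_{k_0\omega} y - x\cdot_\omega y
&= -\varepsilon(\omega,k_0)\!\!\!\sum_{(k_0,p,q)\in T^{k_0}_{k_0\omega}}\!\!\!\varepsilon_p(a)\,\varepsilon_q(b)
 -\varepsilon(\omega,k_0)\!\!\!\sum_{(k_0,p,q)\in T^{k_0}_{\omega}}\!\!\!\varepsilon_p(a)\,\varepsilon_q(b) \\
&= -\varepsilon(\omega,k_0)\sum_{p\in a\cap\alpha_{k_0}}\sum_{q\in b\cap\alpha_{k_0}}\varepsilon_p(a)\,\varepsilon_q(b).
\end{align*}

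Finally I would recognize the double sum as a product of two single sums. By the description of the dual map $v_{k_0}=v_{\alpha_{k_0}}$ from Section~\ref{The dual map}, $\sum_{p\in a\cap\alpha_{k_0}}\varepsilon_p(a)=v_{k_0}(x)$ and similarly $\sum_{q\in b\cap\alpha_{k_0}}\varepsilon_q(b)=v_{k_0}(y)$, yielding \eqref{chori}. The main (minor) obstacle is bookkeeping: one must keep carefully track of which triples with $k=k_0$ move from $T_\omega$ to $T_{k_0\omega}$ under the reversal, and check that together they exhaust all pairs $(p,q)\in (a\cap\alpha_{k_0})\times(b\cap\alpha_{k_0})$ so that the partial double sum factors completely; genericity of the pair $(a,b)$ is exactly what makes this exhaustion clean.
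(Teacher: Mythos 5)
Your proof is correct and uses the same key ingredient as the paper, namely Lemma~\ref{1aeee}. The only difference is a small one of bookkeeping: the paper picks an $\omega$-admissible pair $(a,b)$ so that $x\cdot_\omega y$ is given directly by the definition and $T_{k_0\omega}(a,b)$ is \emph{all} of $(a\cap\alpha_{k_0})\times(b\cap\alpha_{k_0})$, requiring only one application of Lemma~\ref{1aeee}; you instead work with an arbitrary generic pair, apply the lemma twice, and observe that $T^{k_0}_\omega$ and $T^{k_0}_{k_0\omega}$ partition $(a\cap\alpha_{k_0})\times(b\cap\alpha_{k_0})$ — this is slightly longer but equally valid, and has the (minor) virtue of showing the formula follows for any generic representative without first normalizing to an $\omega$-admissible one.
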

\begin{proof}  Pick an $\omega$-admissible  pair of loops $a,b$ representing respectively $x,y$. We  compute $x  \cdot_{\omega} y=a \cdot_{\omega} b$ from the definition and compute $x \cdot_{k_0\omega} y= a \cdot_{k_0\omega} b$ from  Lemma~\ref{1aeee}.   The resulting   expressions   differ   in the sum associated with    $ T_{k_0\omega} (a,b)$. Since the pair $(a,b)$ is $\omega$-admissible, the set $T_{k_0\omega} (a,b) $ consists of  all triples $(k_0, p\in a \cap \alpha_{k_0} ,q\in b \cap \alpha_{k_0})$. Therefore
$$x \cdot_{k_0\omega} y    =x \cdot_{\omega} y + \sum_{p\in a \cap \alpha_{k_0} ,q\in b \cap \alpha_{k_0}}\, \varepsilon(k_0\omega, k_0) \,  \varepsilon_p(a) \,  \varepsilon_q(b)$$
$$= x \cdot_{\omega} y - \varepsilon(\omega, k_0) \,  v_{k_0}(x) \,  v_{k_0}(y).$$
\end{proof}

%

\subsection {Second homological intersection  forms}\label{foddfo}  
Pick a  gate orientation~$\omega$ of~$X$ and define a  skew-symmetric bilinear form  $i_X: H_1(X;R) \times H_1(X;R) \to R$
  by 
\begin{equation}\label{chori+nmnm+es} i_X (x,y)= x \cdot_{\omega} y - y\cdot_{\omega} x  \end{equation} 
for   $x,y \in H_1(X;R)$. This form   does not depend on~$\omega$ because, by~\eqref{chori}, 
$$x \cdot_{k\omega} y  -  y \cdot_{k\omega} x   =x \cdot_{\omega} y - y\cdot_{\omega} x $$ 
  for any $x,y\in H_1(X;R)$ and    $k \in  \pi_0(\alpha)$.  
 We call   $i_X$ the \emph{second homological intersection  form of~$X$}. Both the first and the second  homological intersection forms   generalize  the standard   intersection form  in  1-homology of   a surface. Indeed, the value of  the form \eqref{action} (respectively,  \eqref{chori+nmnm+es})  on any pair of  homology classes  of loops in $\Sigma'\subset X$    is equal to    the usual   intersection number  of these loops in~$\Sigma'$ (respectively,  twice this number).

\begin{theor}\label{CQPeeeSa}  For any gate orientation~$\omega$   and any $x,y\in H_1(X;R)$, we have \begin{equation}\label{cnnnhori++es}
2 x \cdot_{\omega} y   = i_X(x,y) +\sum_{k\in \pi_0(\alpha)}  \varepsilon(\omega, k) \,  v_k(x) \, v_k(y) \end{equation}
\end{theor}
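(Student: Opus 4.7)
The plan is to express $y \cdot_\omega x$ in terms of $x \cdot_\omega y$ and the dual maps, and then substitute into the definition $i_X(x,y) = x \cdot_\omega y - y \cdot_\omega x$. The bridge between $x \cdot_\omega y$ and $y \cdot_\omega x$ is provided by identity~\eqref{chorisdf}, which gives $y \cdot_\omega x = - x \cdot_{\overline\omega} y$. So the task reduces to computing $x \cdot_{\overline\omega} y$ in terms of $x \cdot_\omega y$.

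The first step is to iterate Theorem~\ref{1a} over all gates. Enumerate $\pi_0(\alpha)=\{k_1,\ldots,k_N\}$ and consider the sequence of gate orientations $\omega_0=\omega$, $\omega_j = k_j \omega_{j-1}$, so that $\omega_N = \overline\omega$. Since flipping the orientation of $\alpha_{k_j}$ does not affect $\varepsilon(-,k_i)$ for $i\neq j$, we have $\varepsilon(\omega_{j-1},k_j) = \varepsilon(\omega,k_j)$. Applying Theorem~\ref{1a} at each step and summing the telescoping differences yields
\begin{equation*}
x \cdot_{\overline\omega} y \;=\; x \cdot_\omega y \;-\; \sum_{k\in\pi_0(\alpha)} \varepsilon(\omega,k)\, v_k(x)\, v_k(y).
\end{equation*}

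The second step is the substitution. Using \eqref{chorisdf} followed by the identity just derived,
\begin{equation*}
i_X(x,y) \;=\; x\cdot_\omega y - y\cdot_\omega x \;=\; x\cdot_\omega y + x\cdot_{\overline\omega} y \;=\; 2\,x\cdot_\omega y \;-\; \sum_{k\in\pi_0(\alpha)} \varepsilon(\omega,k)\, v_k(x)\, v_k(y),
\end{equation*}
which rearranges to \eqref{cnnnhori++es}.

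There is essentially no obstacle here: all the substantive work was done in proving Theorem~\ref{1a} and identity \eqref{chorisdf}. The only point requiring a moment of care is the bookkeeping on $\varepsilon(\omega,k)$ during the iteration — namely the observation that $\varepsilon(\omega,k_j)$ is unaffected by prior flips of other gates — so that the correction terms sum cleanly without sign changes. Once that is noted, the identity drops out immediately.
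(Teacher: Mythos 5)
Your proof is correct and follows exactly the same route as the paper: iterate Theorem~\ref{1a} over all gates to express $x \cdot_{\overline\omega} y$ in terms of $x \cdot_\omega y$, then substitute $x\cdot_{\overline\omega} y = -y\cdot_\omega x$ from~\eqref{chorisdf} and compare with the definition of $i_X$. The observation that $\varepsilon(\omega_{j-1},k_j)=\varepsilon(\omega,k_j)$, which you flag as the one point of care, is the small detail the paper leaves implicit when it says ``applying~\eqref{chori} consecutively.''
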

\begin{proof} 
Applying~\eqref{chori}  consequtively to all elements of $\pi_0(\alpha)$, we get
$$x \cdot_{\overline \omega} y=x \cdot_\omega y-\sum_{k\in \pi_0(\alpha)}  \varepsilon(\omega, k) \,  v_k(x) \, v_k(y). $$
Substituting  $x\cdot_{\overline \omega}  y= - y \cdot_{ \omega}  x$,    we get
$$
x \cdot_{\omega} y +  y \cdot_{\omega} x =\sum_{k\in \pi_0(\alpha)}  \varepsilon(\omega, k) \,  v_k(x) \, v_k(y). $$
This formula and the equality $   x \cdot_{\omega} y - y\cdot_{\omega} x = i_X (x,y) $ imply  \eqref{cnnnhori++es}.
\end{proof}
 
Formula~\eqref{cnnnhori++es} shows that if $1/2\in R$, then $\cdot_\omega$ is a sum of  $(1/2) i_X$ and   terms associated with  the gates.

\subsection{Remark}\label{modulo2}   For $R=\ZZ/2\ZZ$, the definitions in this section and below  do not depend on the orientation of~$\Sigma$ and  extend  to non-orientable quasi-surfaces.

 \section{Homotopy intersection forms}\label{section3}

We define  homotopy intersection forms of~$X$ refining the homological   forms above.
In this section and below,    ${\mathcal L} = {\mathcal L} (X)$   is the set of free   homotopy classes of loops in~$X$  
 and   $ R{\mathcal L}$ is the free $R$-module with basis~${\mathcal L}$.
By Sections~\ref{Derivative brackets in group algebras} and~\ref{A topological example},   for each $m\geq 1$, the   gate $\alpha_k\subset X$   determines  a cyclically symmetric  $m$-bracket in  $R\mathcal L$. It is  
denoted $\mu^{m}_{k}
$.


\subsection{First homotopy intersection form}\label{The form bullet}   
Pick a gate orientation~$\omega$ of~$X$. 
Any  pair    $x,y \in {\mathcal L} $ can be represented by an  $\omega$-admissible pair of loops $a,b$ in~$X$, cf. Section~\ref{fofo}. For  a point $r\in a\cap b$,  consider the loops  $a_r, b_r$  which are reparametrizations of $a, b$, respectively,     starting and ending  in~$r$.   Consider the product loop $a_r b_r$ based in~$r$ and  set
\begin{equation}\label{deff} x \bullet_\omega y=\sum_{r\in a\cap b} \varepsilon_r   (a,b) \langle a_r b_r \rangle \in  R {\mathcal L}  \end{equation} where for a    loop~$c$ in~$X$, we let  $\langle c \rangle \in {\mathcal L} $ be its free homotopy class.  The sum on the right-hand side of \eqref{deff} is an algebraic sum of all possible ways to graft~$a$ and~$b$. This sum
  is preserved under   all loop moves on $a,b$ keeping  this pair   $\omega$-admissible. Hence,  $x \bullet_\omega y$ does not depend on the choice of $a,b$ in the   homotopy classes  $x,y$. Extending the map  $(x,y) \mapsto x \bullet_\omega y$ by bilinearity, we obtain a  bilinear pairing \begin{equation}
\label{ghjk} \bullet_\omega:R {\mathcal L}   \times R {\mathcal L}  \to R {\mathcal L}  . \end{equation}  We call  this pairing  the  \emph{first homotopy intersection form of~$X$}. 
The proof of Formula~\eqref{chorisdf} applies here and  shows that  for any $x,y\in R {\mathcal L} $, 
\begin{equation}\label{chorisdfbulle}   
 x \bullet_\omega y =- y \bullet_{\overline \omega} x.   \end{equation}

For a generic (non-double) point~$p$ of a generic loop~$a$, we let $a_p$ be the   loop  which starts at~$p$ and goes along~$a$ until coming back to~$p$. Having two generic loops $a,b$ and  points $p  \in a\cap \alpha_k$, $q\in q\cap \alpha_k$ on the same gate,  we can   multiply the loops $a_p, b_q$  using an arbitrary path in $\alpha_k$   connecting their base points $p,q$. The resulting loop determines a well-defined element of $\mathcal L$   denoted $\langle a_p b_q \rangle$. 

\begin{lemma}\label{1aeee++}   Let $x,y \in {\mathcal L}$  be represented by a generic pair of loops $a,b$. Then 
\begin{equation}\label{chorieee++}
 x \bullet_{\omega} y   =  \sum_{r\in a\cap b} \varepsilon_r  (a,b) \langle a_r b_r \rangle +\sum_{(k,p,q)\in T_\omega (a,b) }\,    \varepsilon(\omega, k)   \,\varepsilon_p(a)  \,\varepsilon_q(b) \langle a_pb_q \rangle. \end{equation}
\end{lemma}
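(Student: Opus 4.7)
The plan is to mimic the proof of Lemma~\ref{1aeee}, upgrading the sign bookkeeping to a bookkeeping of free homotopy classes of loops. Starting from the given generic pair $a,b$, I would produce an $\omega$-admissible pair $a',b'$ representing $x,y$ by pushing each branch of $a$ crossing a gate $\alpha_k$ slightly to the $\omega$-left of each branch of $b$ crossing the same gate. As in the proof of Lemma~\ref{1aeee}, this can be arranged so that the intersections of $a'$ and $b'$ in $\Sigma'$ are precisely the old intersections $a\cap b$ together with exactly one new intersection $r=r(k,p,q)\in\Sigma'$, near the gate $\alpha_k$, for each triple $(k,p,q)\in T_\omega(a,b)$.

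By the definition \eqref{deff} applied to $a',b'$,
$$x\bullet_\omega y=\sum_{r\in a'\cap b'}\varepsilon_r(a',b')\,\langle a'_r b'_r\rangle.$$
For an old intersection $r\in a\cap b$, the push only modifies $a$ in small neighborhoods of the gates, so $\varepsilon_r(a',b')=\varepsilon_r(a,b)$ and the loop $a'_rb'_r$ is freely homotopic to $a_rb_r$. This reproduces the first sum on the right-hand side of \eqref{chorieee++}.

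For each new intersection $r=r(k,p,q)$, the local sign analysis carried out in the proof of Lemma~\ref{1aeee} already gives $\varepsilon_r(a',b')=\varepsilon(\omega,k)\,\varepsilon_p(a)\,\varepsilon_q(b)$. The remaining task is to identify $\langle a'_rb'_r\rangle$ with $\langle a_pb_q\rangle$. Near the crossing $r$, the loop $a'$ is a pushed copy of $a$ that reaches $r$ through a short arc in (a neighborhood of) $\alpha_k$ emanating from $p$, and similarly $b'$ reaches $r$ through a short arc from $q$. Thus, up to reparametrization, $a'_rb'_r$ is the concatenation of a short arc in $\alpha_k$ from $r$ to $p$, the loop $a_p$, a return arc to $r$, a short arc to $q$, the loop $b_q$, and a return arc to $r$. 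Freely homotoping $r$ along $\alpha_k$, this is precisely the loop obtained by multiplying $a_p$ and $b_q$ via a path in $\alpha_k$ from $p$ to $q$. Since $\alpha_k$ is a gate, any loop lying in $\alpha_k$ is contractible in $X$, so the free homotopy class of this product is independent of the chosen connecting path and equals $\langle a_pb_q\rangle$. Summing over all triples $(k,p,q)\in T_\omega(a,b)$ yields the second sum in \eqref{chorieee++}.

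The main subtle point I anticipate is the identification $\langle a'_rb'_r\rangle=\langle a_pb_q\rangle$: one must check that the short arcs in $\alpha_k$ introduced by the perturbation combine coherently with the ambiguous path in $\alpha_k$ used to define $\langle a_pb_q\rangle$. This reduces to the contractibility in $X$ of loops lying inside $\alpha_k$, which is exactly the defining property of a gate; every other ingredient (signs, admissibility, invariance of $\bullet_\omega$ under loop moves) is already established in Section~\ref{section2} and in the preceding paragraphs of this section.
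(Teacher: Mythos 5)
Your proof is correct and follows exactly the approach the paper takes: the paper's own proof consists of the single sentence ``The proof repeats the proof of Lemma~\ref{1aeee} with obvious modifications,'' and what you have written out (pushing $a$ to the $\omega$-left, matching old and new crossings, computing the sign at each new crossing $r(k,p,q)$, and identifying $\langle a'_r b'_r\rangle$ with $\langle a_p b_q\rangle$ via the gate's contractibility) is precisely the intended spelling-out of those modifications.
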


The proof repeats the proof of Lemma~\ref{1aeee} with   obvious modifications.  If $a\cap b=\emptyset$, then  \eqref{chorieee++} simplifies to   
\begin{equation}\label{chorieee++vj}
 x \bullet_{\omega} y   =  \sum_{(k,p,q) \in T_\omega (a,b) }\,    \varepsilon(\omega, k)   \,\varepsilon_p(a)  \,\varepsilon_q(b) \langle a_pb_q \rangle. \end{equation}
 

\begin{theor}\label{1abulle}    For any $x,y\in R {\mathcal L}$ and     $k\in \pi_0(\alpha)$,
\begin{equation}\label{cvhoribulle}
  x \bullet_{k\omega}  y   =   x \bullet_{\omega} y   - \varepsilon(\omega, k) \,  \mu^{2}_{k}(x,y)  . \end{equation}
\end{theor}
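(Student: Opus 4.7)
The plan is to mimic the proof of Theorem~\ref{1a} (the homological analog), replacing the homological crossing count by its homotopy refinement using Lemma~\ref{1aeee++}. Concretely, I would represent $x, y \in \mathcal{L}$ by an $\omega$-admissible pair of generic loops $a, b$ in~$X$. Then $x \bullet_\omega y$ is, by definition~\eqref{deff}, the sum $\sum_{r \in a \cap b} \varepsilon_r(a,b) \langle a_r b_r\rangle$, since $T_\omega(a,b) = \emptyset$.

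Next I would compute $x \bullet_{k\omega} y$ using formula~\eqref{chorieee++} from Lemma~\ref{1aeee++}, applied to the same loops $a, b$ but with gate orientation $k\omega$. The key observation is that $T_{k\omega}(a,b)$ agrees with $T_\omega(a,b)$ at every gate $\alpha_{k'}$ with $k' \neq k$ (empty by $\omega$-admissibility), while at the gate $\alpha_k$ the orientation has been reversed, so $q <_{k\omega} p$ is equivalent to $p <_\omega q$; thus $T_{k\omega}(a,b)$ consists exactly of all triples $(k, p, q)$ with $p \in a \cap \alpha_k$, $q \in b \cap \alpha_k$, $p \neq q$. For generic pairs of loops the condition $p \neq q$ is automatic since $a$ and~$b$ do not share points on the gates. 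Combined with the identity $\varepsilon(k\omega, k) = -\varepsilon(\omega, k)$, Lemma~\ref{1aeee++} yields
\begin{equation*}
x \bullet_{k\omega} y = \sum_{r \in a \cap b} \varepsilon_r(a,b) \langle a_r b_r\rangle \;-\; \varepsilon(\omega,k) \sum_{p \in a \cap \alpha_k,\ q \in b \cap \alpha_k} \varepsilon_p(a)\, \varepsilon_q(b)\, \langle a_p b_q\rangle.
\end{equation*}

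The first term on the right is $x \bullet_\omega y$. To finish, I would identify the second sum with $\mu^2_k(x,y)$ via Lemma~\ref{strleleucte}: specializing Formula~\eqref{twotwo3aa} to $m = 2$ and $C = \alpha_k$ gives precisely
\begin{equation*}
\mu^2_k(\langle a\rangle, \langle b\rangle) = \sum_{p \in a \cap \alpha_k,\ q \in b \cap \alpha_k} \varepsilon_p(a)\, \varepsilon_q(b)\, \langle a_p b_q\rangle,
\end{equation*}
and substituting this into the expression above yields exactly~\eqref{cvhoribulle}.

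The one step I would expect to need some care is matching notational conventions between $\langle a_p b_q\rangle$ as defined just before Lemma~\ref{1aeee++} and the product loop $\prod_i a_{i,t_i}$ appearing in Lemma~\ref{strleleucte}: both use an arbitrary auxiliary path in $\alpha_k$ to join base points, and one must check that the resulting free homotopy class is independent of that choice (which follows from path-connectedness of $\alpha_k$ and contractibility in~$X$ of loops contained in the gate). Once this identification is in place, bilinearity extends the formula from $x, y \in \mathcal{L}$ to arbitrary $x, y \in R\mathcal{L}$, completing the proof.
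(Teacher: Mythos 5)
Your proposal is correct and matches the paper's approach exactly: the paper's proof is a one-line reduction that says to proceed as in the proof of Theorem~\ref{1a} with $\cdot$ replaced by $\bullet$ and Lemma~\ref{1aeee} replaced by Lemma~\ref{1aeee++}, then to identify the resulting gate sum with $\mu^2_k$ via Formula~\eqref{twotwo3aa}, which is precisely the chain of steps you have spelled out. Your check of the notational identification between $\langle a_p b_q\rangle$ and the product loops of Lemma~\ref{strleleucte}, and the independence of the connecting path in the gate, is a genuine point that the paper leaves implicit, and you resolve it correctly.
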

\begin{proof}  It suffices to handle the case   $x,y \in {\mathcal L}  $.  Then   proceed as in the proof of Corollary~\ref{1a} replacing $\cdot$ by $\bullet$ and using Formula~\eqref{twotwo3aa} to compute $\mu^{2}_{k}=\mu^{2}_{\alpha_k}$.  
\end{proof}

Applying \eqref{cvhoribulle} consecutively  to all $k\in \pi_0(\alpha)$ and using~\eqref{chorisdfbulle}, we get 
 
\begin{corol}\label{CQPSabulle}  
 For any $x,y\in R {\mathcal L}$, \begin{equation}\label{chori++}
x \bullet_{\omega} y +  y \bullet_{\omega} x =\sum_{k\in \pi_0(\alpha)}  \varepsilon(\omega, k) \,  \mu^{2}_{k}(x,y). \end{equation}
\end{corol}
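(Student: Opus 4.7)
The plan is to iterate Theorem~\ref{1abulle} over all the gates of~$X$ and then invoke the skew relation \eqref{chorisdfbulle}. Writing $\pi_0(\alpha)=\{k_1,\ldots, k_N\}$, I would start with the gate orientation~$\omega$ and successively flip the direction of $\alpha_{k_1},\alpha_{k_2},\ldots,\alpha_{k_N}$. Since the sign $\varepsilon(\cdot, k)$ depends only on whether the direction of $\alpha_k$ matches the orientation of~$\Sigma$ at~$\alpha_k$, flipping the direction at a gate $k_j$ with $j\neq i$ leaves $\varepsilon(\cdot, k_i)$ unchanged. Hence each successive application of \eqref{cvhoribulle} contributes the same term $-\varepsilon(\omega,k_i)\,\mu^{2}_{k_i}(x,y)$ as it would if no earlier flips had been performed.

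Summing these $N$ identities telescopes, and after flipping every gate we reach the fully reversed gate orientation $\overline{\omega}$; thus
\begin{equation*}
x\bullet_{\overline{\omega}} y \;=\; x\bullet_{\omega} y \;-\; \sum_{k\in\pi_0(\alpha)}\varepsilon(\omega,k)\,\mu^{2}_{k}(x,y).
\end{equation*}
Now by \eqref{chorisdfbulle} applied with $\omega$ replaced by $\overline{\omega}$ (and using $\overline{\overline{\omega}}=\omega$), we have $x\bullet_{\overline{\omega}} y=-\,y\bullet_{\omega} x$. Substituting this into the telescoped identity and rearranging gives exactly
\begin{equation*}
x\bullet_{\omega} y + y\bullet_{\omega} x \;=\; \sum_{k\in\pi_0(\alpha)}\varepsilon(\omega,k)\,\mu^{2}_{k}(x,y),
\end{equation*}
which is the claim.

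The only point requiring care is the observation that $\varepsilon(\omega,k)$ is a purely local quantity at the single gate~$\alpha_k$, so the flips at distinct gates commute and act independently on the sign data; this is what makes the iteration of \eqref{cvhoribulle} straightforward rather than introducing cross terms. Beyond that, the argument is a direct summation, so there is no real obstacle.
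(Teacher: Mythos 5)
Your proof is correct and follows essentially the same route the paper takes: iterate Theorem~\ref{1abulle} (Formula~\eqref{cvhoribulle}) over all gates to pass from $\omega$ to $\overline{\omega}$, then apply the skew relation \eqref{chorisdfbulle}. The paper states this in one line, while you have usefully spelled out the point that $\varepsilon(\cdot,k_i)$ is unaffected by flipping other gates, which is what makes the telescoping sum come out cleanly.
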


  \subsection{Second homotopy intersection form}\label{The dbdbdform bullet}   We  define a   2-bracket $[-,-] $ in $ R \mathcal L$
  by 
$[x,y]=x \bullet_\omega y- y \bullet_\omega x $ for all $x,y \in R {\mathcal L}$.   This   skew-symmetric bracket    does not depend on~$\omega$ because, by~\eqref{cvhoribulle}, 
$$x \bullet_{k\omega} y  -  y \bullet_{k\omega} x   =x \bullet_{\omega} y - y\bullet_{\omega} x $$ 
  for all $k \in  \pi_0(\alpha)$.  (Here we use the symmetry of the brackets $\{\mu^2_k\}_k$.)  
 We call the 2-bracket   $[-,-]$ the \emph{second homotopy intersection  form of~$X$}. Both   the first and the second homotopy intersection forms     generalize Goldman's~\cite{Go1}, \cite{Go2}   bracket: 
 the value of   $\bullet_\omega$ (respectively,  $[-,-] $)  on any pair of  free homotopy  classes  of loops in $\Sigma'\subset X$    is equal to  their  Goldman's    bracket    (respectively,  twice this bracket).   
     
 Theorem~\ref{1aeee++} allows us to compute $[x,y] $ for   $x,y \in {\mathcal L}$   from any generic pair of loops $a,b$ representing  $x, y$ and any gate orientation~$\omega$ of~$X$. Namely, 
\begin{equation}\label{chorieee++veryryr}
[x,y]     \end{equation}
$$ = 2 \sum_{r\in a\cap b} \varepsilon_r  (a,b) \langle a_r b_r \rangle +\sum_{(k,p,q) \in T(a,b) }\,   \delta_\omega (p,q)\,  \varepsilon(\omega, k)   \,\varepsilon_p(a)  \,\varepsilon_q(b) \langle a_pb_q \rangle$$
where $ \delta_\omega (p,q)=1$ if   $p>_\omega q$ and 
$ \delta_\omega (p,q)=-1$ if  $p<_\omega q$.    Note also the identity
$$
2x \bullet_{\omega} y = [x,y] +\sum_{k\in \pi_0(\alpha)}  \varepsilon(\omega, k) \,  \mu^{2}_{k}(x,y)  $$which  can be easily deduced from~\eqref{chori++}.  
Consequently,  if $1/2\in R$, then the form $\bullet_\omega$ expands as a sum of $(1/2) [-,-]$  and  terms associated with the gates.

\subsection{Remark}  Other algebraic operations  associated with  surfaces  may be extended to quasi-surfaces. This includes algebraic intersections of loops (see \cite{Tu1}), Lie cobrackets 
(see \cite{Tu2}, \cite{Hain}), double brackets   and generalized Dehn twists (see \cite{MTold}), and  quasi-Poisson structures on the representation spaces (see \cite{MTnew}). In a sequel to this paper, the author plans to discuss  natural cobrackets 
appearing in the study of quasi-surfaces.

 \section{Main theorem}\label{section3ghgh}

\subsection{Statement}  We state   our  main  result concerning the quasi-surface~$X$. 
  Let  $\pi=\pi_1(X, \ast)$ with $\ast\in Y \subset X$. Consider the group algebra 
$A=R[\pi]$ and  the second homotopy intersection form $[-,-]: \check A \times \check A \to \check A$  in $ \check A=R \mathcal L$.  The next theorem computes the Jacobiator of this 2-form via  the 3-bracket
 $$\mu=\sum_{k \in \pi_0(\alpha)} \mu^{3}_{k}:  (\check A)^3 \to \check A.$$ This theorem shows  that the failure of the intersection form  to satisfy the Jacobi identity is entirely due to the presence of the gates.

\begin{theor}\label{structe}   The brackets  $[-,-]$ and $ \mu$ are braces in~$  A$ forming  a quasi-Lie pair.
\end{theor}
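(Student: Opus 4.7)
The plan is to establish the brace properties of $[-,-]$ and $\mu$ separately and then verify the quasi-Jacobi identity, which is the substantive part of the theorem. The brace property of $\mu$ is essentially immediate: each summand $\mu^3_k$ is a cyclically symmetric $3$-brace by the gate construction of Section~\ref{Gate brackets}, i.e.\ Theorem~\ref{sDDjkgfjfgjDtrbucte} applied to three copies of any Fox derivative representing the gate $\alpha_k$, together with the cyclic symmetry remark following Corollary~\ref{sDDjmmmkgfjfgjDtrbucte}; a finite sum of cyclically symmetric $3$-braces is again such. For $[-,-]$, skew-symmetry is built in, so by Lemma~\ref{Cbeeeetlemmmmheor} it is enough to check that $[-,-]$ is a weak derivation in one variable. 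Using \eqref{chorisdfbulle} to rewrite $[x,y]= x \bullet_\omega y + x \bullet_{\overline\omega} y$, it suffices to show that $x \bullet_\omega (-)$ is a weak derivation for every gate orientation~$\omega$. This is a Goldman-style argument based on Formula~\eqref{chorieee++}: when a representative loop of the second variable is replaced by a concatenation, the crossings and graftings distribute additively over the two factors, and the resulting operation on $\check A$ is induced by a concrete derivation of the group algebra of $\pi_1(X,\ast)$.

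The core of the proof is the quasi-Jacobi identity~\eqref{Jaco}. The strategy is to represent $x,y,z\in \check A=R\mathcal L$ by a generic triple of loops $a,b,c$ in~$X$, fix a gate orientation~$\omega$, and expand both sides using Formula~\eqref{chorieee++veryryr}. The iterated bracket $[[x,y],z]$ then decomposes into a sum indexed by pairs (inner crossing of $a$ with $b$, outer crossing of the resulting grafted loop with $c$). Summing over the three cyclic permutations, each contribution falls into one of four classes according to the types of the two crossings: surface--surface, surface--gate, gate--surface, or gate--gate. The pure surface--surface contributions reproduce the Jacobiator of Goldman's bracket on the surface part $\Sigma'\subset X$ (with a global factor of $4$ coming from the two factors of $2$ in~\eqref{chorieee++veryryr}) and vanish by the classical Goldman identity. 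The mixed surface--gate and gate--surface contributions rearrange into cyclic sums that cancel, because a grafted loop $a_rb_r$ meets a gate precisely where $a$ and $b$ do, and the corresponding signs admit a cyclic telescoping.

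The only surviving contributions are the gate--gate terms, parameterized by triples $(p,q,r)$ with crossings on gates of $X$ (possibly different gates), weighted by products of the $\delta_\omega$-signs that encode the relative $\omega$-orderings of the two inner crossings on their shared gate. The main obstacle is the bookkeeping here: one must show that, after summing the three cyclic permutations in the Jacobiator and simplifying these sign products, what remains matches the expansion of $\mu(x,y,z)-\mu(y,x,z)$ given by Lemma~\ref{strleleucte} applied to three copies of the same gate. This reduces to a combinatorial identity on ordered triples of points on each gate; the key input is that $\delta_\omega(p,q)+\delta_\omega(q,p)=0$, so that the $\omega$-dependent asymmetry on the left-hand side of~\eqref{Jaco} survives only as the $x\leftrightarrow y$ skew-symmetrization on the right. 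Once this identity is verified on generic representatives, skew-symmetry of $[-,-]$ and cyclic symmetry of $\mu$ make both sides of~\eqref{Jaco} cyclically symmetric, so no further bookkeeping is needed, and the identity extends to all of $\check A$ by linearity.
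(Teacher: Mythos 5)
Your treatment of the brace property for $\mu$ and for $[-,-]$ matches the paper: $\mu$ is a sum of cyclically symmetric gate $3$-braces, and $[-,-]$ is handled by writing $[x,y] = x\bullet_\omega y + x\bullet_{\overline\omega}y$ and observing that each $x\bullet_\omega(-)$ lifts to a genuine derivation of the group algebra (the Kawazumi--Kuno pairing). That part is fine.

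Where the proposal diverges, and where it has a genuine gap, is in the verification of the quasi-Jacobi identity~\eqref{Jaco}. You propose to expand $[[x,y],z]$ for a \emph{generic} triple $a,b,c$ using~\eqref{chorieee++veryryr}, producing four classes of terms (surface--surface, surface--gate, gate--surface, gate--gate), and then to cancel the two mixed classes by a ``cyclic telescoping.'' That cancellation is asserted, not proved, and it is precisely the kind of bookkeeping the paper arranges to avoid entirely. The paper's proof applies Lemma~\ref{Loopsiemnn} first: any triple of free homotopy classes in~$X$ can be represented by a \emph{simple} family of loops, i.e.\ loops with no mutual crossings or self-crossings in the surface part $\Sigma'$. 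Once the loops are simple, \emph{there are no surface crossings at all}; the whole computation lives on the gates. Skipping this reduction forces you into the mixed-term analysis, which you have not carried out. (The surface--surface cancellation you cite does work via the $\sigma=-\tau$ argument in the paper, but the mixed cancellation is a separate, nontrivial claim.)

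Even for the pure gate contributions, the account is inaccurate. After reducing to simple loops, the paper distinguishes ``$4$-tuple'' gate--gate terms, where the two graftings occur at four distinct points (two on one gate, two on a possibly different gate), from ``$3$-tuple'' terms, where one gate point is shared and all three crossings lie on a single gate $\alpha_k$. The $4$-tuple terms are shown to be symmetric under $x\leftrightarrow y$ after adding the $\omega$ and $\overline\omega$ contributions, so they drop out of $P(x,y,z)-P(y,x,z)$; only the $3$-tuple terms survive, and matching them with $\mu(x,y,z)-\mu(y,x,z)$ requires the full $\delta(\varepsilon,\varepsilon',\varepsilon'')$ sign functions of Section~\ref{section3loops+} together with a three-way case split on the $\omega$-order of the three points on $\alpha_k$. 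Your description collapses the gate--gate terms to ``triples $(p,q,r)$'' and claims the key combinatorial input is just $\delta_\omega(p,q)+\delta_\omega(q,p)=0$; that identity alone does not separate the $4$-tuple and $3$-tuple contributions nor close the case analysis. To repair the argument you should (i) invoke Lemma~\ref{Loopsiemnn} to reduce to a simple triple before expanding, and (ii) carry out the $4$-tuple/$3$-tuple split and the $\delta$-function identities as in the paper, rather than relying on the unverified telescoping.
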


This theorem can be rephrased  by saying that the pair $[-,-], \mu$ turns~$A$ into a brace algebra. 
Combining   Theorems \ref{structe}  and \ref{Cbtheor}, we conclude that for all $n \geq 1$, the $n$-th trace algebra  $A^t_n$ of $A$     carries a unique structure of a brace algebra  
 such   that 
  the trace ${\text {tr}}:  \check A \to A_n^t$  is a brace    homomorphism.     
  
 By  Example~\ref{Exampleskk}.1 (case $m=1$), every surface~$\Sigma$ is a quasi-surface with a single gate  which separates the surface part (a copy of~$\Sigma$) from a cone over a segment in $\partial \Sigma$. Here  $\mu=0$ since  each  loop in this quasi-surface may be deformed into the complement of the gate.  Theorem~\ref{structe}  yields then  the usual Jacobi relation for $[-,-]$. On the other hand,  in Example~\ref{Exampleskk}.4 it may well happen that $\mu\neq 0$.

  The proof of Theorem~\ref{structe} occupies the rest of the section.

\subsection{Proof of Theorem~\ref{structe}: beginning}   Throughout the proof we fix a gate orientation~$\omega$  of~$X$. 
By Section~\ref{Gate brackets},  the brackets $\{\mu^{3}_{k}\}_k$  in $ \check A$ are  cyclically symmetric    braces in~$A$ (independent of~$\omega$). Therefore so is their sum~$\mu$. The skew-symmetry of  the  2-bracket  $[-,-] $ in  $ \check A$  is obvious.   We now prove  that this  2-bracket    is a brace in~$A$.
Since it  is skew-symmetric, it suffices  to prove that $[-,-] $   is a  weak derivation in the second  variable.    Pick any  $x  \in \mathcal L =\check \pi$,  $y\in \pi$ and   represent the pair $x,y$ by an  $\omega$-admissible pair of loops $a,b$ in~$X$ where~$b$ is  based in~$\ast$.  For each point $r\in a\cap b$,  consider the loop  $a_r$   obtained by reparametrization of~$a$ so that its  starts and ends  in~$r$.  We have $b=b^-_r b^+_r$ where  $b^-_r$ is the path in~$X$   going  from $\ast$ to~$r$ along~$b$ and  $b^+_r$ is the path in~$X$   going  from~$r$ to~$\ast$ along~$b$. The product path $b^-_r  a_r   b^+_r$ in~$X$  is a loop based in~$\ast$;  consider its homotopy class  $[ b^-_r  a_r   b^+_r ] \in \pi $.    Recall  the crossing sign  $ \varepsilon_r  (a,b)=\pm 1 $ 
 and set
\begin{equation}\label{vvdeff} x  \bullet_\omega y=\sum_{r\in a\cap b} \varepsilon_r  (a,b)\, [ b_r^- a_r  b_r^+ ]  \in  A=R [\pi] . \end{equation}  The sum on the  right-hand side is an algebraic sum of all possible ways to graft~$a$ to~$b$.  (For surfaces, the pairing  \eqref{vvdeff}    was first  introduced by  Kawazumi and Kuno  \cite{KK}, \cite{KK2}.) It is easy to see from the definitions  that (i)  the expression $x  \bullet_\omega y$   depends only on $x, y$ and does not depend on the choice of  $a,b$ and (ii) the linear extension  $ A \to  A$ of the map $ y \mapsto  x \bullet_\omega y $   is a derivation of the algebra~$A$. 
  Next, denote the projection $ A\to \check A$  by~$p$ and note that for each   $y\in \pi\subset A$, its image $p(y) \in \check \pi\subset  \check A$ is the conjugacy class   of~$y $ in~$\pi$.   Comparing  Formulas  \eqref{deff} and
\eqref{vvdeff},   we obtain that  $ p(x\bullet_\omega y)  = x \bullet_\omega p(y)$.  By~\eqref{chorisdfbulle}, 
$$[x, p(y)] =  x \bullet_\omega p(y) -  p(y) \bullet_\omega  x
=  x \bullet_\omega p(y) +  x \bullet_{\overline \omega}  p(y)$$ $$=p(x \bullet_\omega y)  + p(  x \bullet_{\overline \omega} y)=p(x \bullet_\omega y  +  x \bullet_{\overline \omega} y).$$
Consequently,  the linear endomorphism  $  [x,  -] $ of $\check A$ is induced by the linear 
 endomorphism  $y \mapsto  x\bullet_\omega  y +x \bullet_{\overline \omega  } y$ of~$A$. Since  the  maps $y \mapsto  x\bullet_\omega y$ and $y \mapsto x \bullet_{\overline \omega  } y$ are derivations of~$A$,  so is their sum $y \mapsto  x \bullet_\omega y  +  x \bullet_{\overline \omega} y$.  This implies that  the bracket $[-,-]$ is   a  weak derivation in the second  variable and  is a brace.

It remains to verify   the  Jacobi-type identity  \eqref{Jaco}. This is done in the next three subsections.

 \subsection{Preliminaries on simple loops}\label{section3loops}  We  say that a finite family  of loops in the quasi-surface ~$X$  is \emph{simple} if   these loops  meet the gates of~$X$ transversely and have no  mutual  crossings or self-crossings   in the surface part~$\Sigma'\subset X$ of~$X$.  A simple family of loops is generic.

 \begin{lemma}\label{Loopsiemnn}  Any finite family of loops in~$X$  can  be deformed in~$X$ into a simple family of loops.
\end{lemma}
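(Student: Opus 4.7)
The proof naturally splits into a transversality argument and a spine-based positioning of the loops in $\Sigma'$.

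I would first apply standard smooth transversality: by a small perturbation (smooth inside $\Sigma'$, continuous overall), each loop of the family becomes a smooth immersion on its preimage of $\Sigma'$, meets the gates transversely at interior points, and has all self- and mutual intersections in $\Sigma'$ concentrated in $\Int \Sigma'$ as transverse double points. The family is then generic in the sense of Section~\ref{moves}.

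To eliminate the remaining crossings in $\Sigma'$, I would pick a spine of $\Sigma'$. Since $\Sigma'$ is a surface and $\alpha = \bigcup_k \alpha_k \neq \emptyset$, there exists an embedded graph $G \subset \Sigma'$ whose edges are pairwise disjoint proper arcs in $\Sigma'$ with endpoints on $\alpha$, such that $G \cup \alpha$ is a deformation retract of $\Sigma'$. Extending this retraction by the identity on $X \setminus \Sigma'$ produces a deformation retraction of $X$ onto the $1$-complex $X_0 := G \cup \alpha \cup (X \setminus \Sigma')$. Each loop of the family is therefore freely homotopic to a loop in $X_0$, whose portion in $\Sigma'$ is a concatenation of oriented traversals of edges of $G$.

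I would then push each loop slightly back into $\Sigma'$ along parallel tracks. Fix for each edge $e$ of $G$ a tubular neighborhood $N(e) \cong e \times [-\varepsilon,\varepsilon]$ in $\Sigma'$, enumerate the finitely many traversals of $e$ made by loops of the family, and assign each traversal a distinct level in $(-\varepsilon,\varepsilon)$, routing it along the corresponding parallel translate of $e$. The resulting arcs in $\Sigma'$ are then pairwise disjoint inside each $N(e)$, so no two branches of the loops cross in $\Int \Sigma'$.

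The main obstacle I anticipate is coordinating the parallel tracks at the gates, where the endpoints of several edges of $G$ land on a single $\alpha_k$. The gate-crossings of the loops must be at distinct points of $\alpha_k$, and their order along $\alpha_k$ must be compatible with the chosen level assignments on the incident edges. After the initial level assignment, I would apply a small isotopy supported in each cylinder neighborhood $\alpha_k^\circ \times [-1,1]$ that braids the gate-crossings into the required order. Since each gate receives only finitely many crossings and each cylinder neighborhood has ample room for braiding, this can be carried out, yielding a simple family.
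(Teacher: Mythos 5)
Your proof is essentially correct, but it takes a genuinely different route from the paper's. The paper argues locally and surgically: once the family is generic, it picks a double point $r$, walks along the loop from $r$ to the \emph{first} gate crossing $p$ encountered, notes that the segment of the loop from the last double point $r_n$ before $p$ to $p$ is embedded and free of crossings, and then pushes the transversal branch at $r_n$ along this segment and across the gate at $p$. This eliminates one double point (at the cost of two new gate crossings), and iterating kills all crossings. Your argument is global: retract $X$ onto a one-dimensional spine $G\cup\alpha\cup(X\setminus\Sigma')$, so that the loops become concatenations of edge traversals, and then fan these traversals out to disjoint parallel tracks in tubular neighborhoods of the edges. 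The paper's argument is more elementary and self-contained, using only the loop moves already introduced in Section~7.5; yours is more structural but leans on two extra inputs.

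Two places in your write-up deserve more care. First, the assertion that $\Sigma'$ deformation retracts onto $G\cup\alpha$ with $G$ a disjoint union of proper arcs whose endpoints all lie on $\alpha$ is true but not automatic, and requires choosing the arcs deliberately: for instance, if $\Sigma'$ is an annulus with $\alpha$ a single arc on one boundary circle, every proper arc $\gamma$ with both endpoints on $\alpha$ is separating, and of the two isotopy classes of such $\gamma$ only one --- the one for which the disk cut off by $\gamma$ abuts the arc of that boundary circle \emph{not} contained in $\alpha$ --- makes $\gamma\cup\alpha\hookrightarrow\Sigma'$ a homotopy equivalence (the other choice gives a nullhomotopic loop). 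You should either cite this spine existence or sketch the handle/arc-cutting argument. Second, the ``braiding'' step at the gates seems to be an overcomplication and can be dropped: distinct edges of $G$ are disjoint, hence have distinct endpoints on each gate $\alpha_k$, so their parallel tracks enter the gate near distinct points of $\alpha_k$; parallel tracks of the \emph{same} edge at distinct levels hit $\alpha_k$ at distinct points by construction; and any ``gate-traversal'' portion of a loop (running along $\alpha_k$ between two edge endpoints after the retraction) can simply be pushed into the singular side $\alpha_k\times[-1,0)$ of the cylinder neighborhood, where crossings do not count. No reordering isotopy is needed to close up the loops.
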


\begin{proof} Consider first a   single loop in~$X$. Since~$X$ is path-connected and contains a gate, we can deform our  loop  into a  generic loop~$a$ which  meets a gate at least once.
 If the set $\# a$ of double points of~$a$ in $\Int(\Sigma')$ is empty, then we are done. 
 Otherwise, pick a  point $r \in \# a$. Starting   at $r=r_0$ and moving along~$a$ (in the given direction of~$a$), we     meet several double  points $r_1,..., r_n  \in \# a$ with $n\geq 0$ and then  come to a  point $p\in a\cap \alpha_k$ of a certain   gate $\alpha_k$.   The  segment~$b$  of~$a$   connecting  $r_n$ to~$p$  is  embedded    in $\Sigma'$  and meets  $  \# a$ only at its endpoint $r_n$. Let~$c$  be   the branch of~$a$ transversal to~$b$ at   $r_n$.   Push the branch~$c$ towards~$p$ along~$b$ while  keeping~$c$ and~$b$ transversal  and eventually push~$c$ across  $\alpha_k$ at~$p$. This  transformation of~$a$ decreases ${\rm {card}} (\# a)$ by 1 and increases    $\card (a \cap \alpha_k)$ by 2. Continuing by induction, we    deform our  loop into a generic loop without   self-intersections  in~$\Sigma'$.  
 If the original  family of loops contains $\geq 2$ loops, then we first deform it into a generic family of loops which  all  meet some gates. Then pushing branches at crossings and self-crossings  as above, we deform the latter family into a simple family of loops. \end{proof} 
 

  \subsection{Preliminaries on sign functions}\label{section3loops+} We define   two  functions  used in the proof.    The first function, $\delta$, is defined on the set $\{\pm 1\}=\{-1, 1\}$ by  $\delta (1)=1 $  and  $\delta (-1)=0 $. The second function, also denoted $\delta$,  is defined on the set of all triples
$  \varepsilon, \varepsilon', \varepsilon''\in \{\pm  1\}$  by  the formula 
\begin{equation}\label{dd} \delta( \varepsilon, \varepsilon', \varepsilon'')=  \varepsilon  \varepsilon' \delta (\varepsilon'') + 
 \varepsilon  \varepsilon'' \delta (\varepsilon') + \varepsilon'  \varepsilon'' (1- \delta (\varepsilon)).\end{equation} This function   is    invariant under all permutations of   
  $  \varepsilon, \varepsilon', \varepsilon''$ as easily follows from the identity $  2\delta (\varepsilon) =\varepsilon+1$ for all $\varepsilon \in \{\pm 1\}$.  The same identity   implies another useful equality:  for all  $  \varepsilon, \varepsilon', \varepsilon''\in \{\pm 1\}$, we have  
  \begin{equation}  \delta( \varepsilon, \varepsilon', \varepsilon'') -\varepsilon  \varepsilon'  \varepsilon''= \label{dd+}   \varepsilon  \varepsilon' \delta (\varepsilon'') + 
 \varepsilon  \varepsilon'' (1-\delta (\varepsilon')) + \varepsilon'  \varepsilon'' (1- \delta (\varepsilon)) . \end{equation}

 \subsection{Proof of  \eqref{Jaco}}\label{section3ghvvgh}    Any (possibly, non-associative) algebra~$\mathcal R$  carries the bracket $[x,y]=xy-yx$. For  $x,y,z\in \mathcal R$,
 set  $$ P(x,y,z)=(xy)z+ (yz)x+(zx)y+z(yx)+x(zy)+y(xz) \in \mathcal R.  $$ A direct computation shows that  \begin{equation}\label{key-} [[x,y], z] +[[y,z],x]+[[z,x],y]=P(x,y,z) - P(y,x,z).  \end{equation}  We apply these observations to the algebra $\check A=R\mathcal L$ with multiplication $\bullet=\bullet_\omega$. In view of \eqref{key-}, the  identity  \eqref{Jaco} is equivalent to the identity    \begin{equation}\label{key}
P(x,y,z) - P(y,x,z)= \mu(x,y,z) - \mu (y,x,z)   \end{equation}  for all $x,y,z\in \check A$.
  Since both sides are linear in $x,y,z$, it suffices to handle  the case   $x,y,z\in {\mathcal L}$. Set 
  \begin{equation}\label{idyvg3n}     u_\omega(x,y,z)= (x \bullet_\omega   y) \bullet_\omega  z  +(y \bullet_\omega   z) \bullet_\omega  x+(z \bullet_\omega   x) \bullet_\omega  y   .  \end{equation}
 Formula~\eqref{chorisdfbulle} implies that
\begin{equation}\label{idyvg3n+} u_{\overline \omega} (x,y,z)=z \bullet_\omega  ( y  \bullet_\omega x)  + x \bullet_\omega  ( z \bullet_\omega  y)+ y \bullet_\omega   (x \bullet_\omega  z)  . \end{equation}
 Thus,    \begin{equation}\label{idvnvnyvg3n+}  P(x,y,z)= u_\omega(x,y,z)+u_{\overline \omega} (x,y,z).  \end{equation} 
  In our computations, we   represent  $x,y,z$  by  loops $a,b,c$ in~$X$, respectively.

   If  the  loops $a,b,c$  lie in $\Sigma'\subset X$ then Goldman's results   imply   that $ u_\omega(x,y,z)=0 $ for all~$\omega$ so that $P(x,y,z)=0$. It is also clear that $\mu(x,y,z)=0$, and  \eqref{key} follows. For completeness, we  check the identity  $ u_\omega(x,y,z)=0$ in this case  (it is also included in the general case treated below).
Deforming if necessary $a,b,c$,  we can assume that  the triple $\{a,b,c\}$    is generic in the sense of  Section~\ref{koko89}. 
  Then   
 $x \bullet_\omega   y $ is computed by~\eqref{deff}.  To compute $(x\bullet_\omega   y) \bullet_\omega   z$,     consider all intersections of the loop~$c$ with the loops $\{a_r b_r\}_{r\in a\cap b}$. At    such an intersection, say~$s$, the loop~$c$ meets either~$a$ or~$b$. Thus,
 $(x\bullet_\omega   y) \bullet_\omega   z=\sigma(a,b,c) +\tau(a,b,c)$ where
$$ \sigma(a,b,c) =\sum_{r\in a\cap b}  \, \sum_{s\in a\cap c}   \, \varepsilon_r  (a,b)   \, \varepsilon_s  (a,c)  \, \langle b\circ_r a \circ_s c \rangle , $$
$$
\tau(a,b,c) =\sum_{r\in a\cap b} \, \sum_{s\in b\cap c}   \, \varepsilon_r   (a,b)  \, \varepsilon_s  (b,c)  \, \langle a\circ_r  b \circ_s c \rangle.$$
Here  $b\circ_r a \circ_s c$ is the loop obtained by grafting  the loops $b_r, s_c$ to~$a$ at the points $r,s$. More precisely,  this  loop goes along~$b$ starting and ending in~$r$, then     along~$a$ from~$ r$ to~$s$, then along~$c$ starting and ending in~$s$, and finally returns along~$a$  to~$r$.   Note that the inclusions  $r\in a\cap b$,  $s\in a\cap c$  ensure that $r\neq s$ so that the loop $b\circ_r a \circ_s c$  is well-defined. The loop $a\circ_r b \circ_s c$ is defined similarly 
grafting  the loops $a_r, s_c$ to~$b$ at the points $r,s$. Therefore 
$$u_\omega(x,y,z)=\sigma(a,b,c) +\tau(a,b,c)+\sigma(b,c,a) +\tau(b,c,a)+\sigma(c,a,b) +\tau(c,a,b).$$
Note  that $\sigma(a,b,c)   =- \tau(c,a,b)$ as directly follows from the definitions and the identity $\varepsilon_s  (a,c) =-\varepsilon_s  (c,a) $ for  $s\in  a\cap c$. 
Permuting $a,b,c$, we get  $\sigma( b,c,a)   =- \tau(a,b,c) $ and $\sigma(c,a,b)   =- \tau(b,c,a)$.  Summing up, we obtain  $u_\omega(x,y,z)=0$.

Consider now  the general case   where the loops $a,b,c$  do not necessarily lie in~$\Sigma'$.  
By Lemma~\ref{Loopsiemnn},  deforming  $a,b,c$ in~$X$, we can ensure that this triple of loops  is simple. By~\eqref{chorieee++vj},  
$$
 x \bullet_{\omega} y   =  \sum_{(k,p,q) \in  T_\omega (a,b) }\,    \varepsilon(\omega, k)   \,\varepsilon_p(a)  \,\varepsilon_q(b) \langle a_p \gamma_{p,q} b_q \gamma_{p,q}^{-1}  \rangle $$
 where   $a_p, b_q $ are   loops reparametrizing $a,b$ and based respectively in $p, q$  while~$\gamma_{p,q}$ is a path  connecting~$p$ and~$q$  in   $\alpha_k$. We deform the loop $a_p \gamma_{p,q} b_q \gamma_{p,q}^{-1}$    by  slightly pushing its subpaths   $ \gamma_{p,q}^{\pm 1}$   \lq\lq behind the gate'', i.e., into $ X\setminus   \Sigma'$.  (The endpoints $p,q$ of these subpaths  are pushed  into $ X\setminus   \Sigma'$ along $a,b$, respectively.) The resulting  loop is denoted   by $a\circ_{p,q}b$. Thus,   \begin{equation}\label{monoee} 
 x \bullet_{\omega} y   =  \sum_{(k,p,q) \in T_\omega (a,b) }\,   \varepsilon(\omega, k)   \,\varepsilon_p(a)  \,\varepsilon_q(b) \langle a\circ_{p,q}b \rangle. \end{equation}
 Note that the loop $a\circ_{p,q}b$  is simple;   moreover,  the pair formed by this loop and~$c$   is simple.
Applying~\eqref{chorieee++vj} again, we get 
\begin{equation}\label{mono} 
\langle a\circ_{p,q}b \rangle  \bullet_{\omega} z   =  \sum_{(l,s,t) \in  T_\omega (a\circ_{p,q}b, c) }\,    \varepsilon(\omega, l)   
\,\varepsilon_s(a\circ_{p,q}b)  \,\varepsilon_t(c) \langle (a\circ_{p,q}b)_s c_t \rangle. \end{equation}
For any $l\in \pi_0(\alpha)$,   the set     $(a\circ_{p,q} b)\cap  \alpha_l$ is a disjoint  union of   the sets $a\cap \alpha_l$ and $b\cap \alpha_l$. 
   Therefore  $T_\omega (a\circ_{p,q}b, c) =T_\omega (a , c)  \coprod T_\omega ( b, c) $. By \eqref{mono},  \begin{equation}\label{monoef} \langle a\circ_{p,q}b \rangle  \bullet_{\omega} z   =\varphi_\omega(k,p,q) +\psi_\omega (k,p,q)\end{equation} where 
$$\varphi_\omega(k,p,q)=  \sum_{(l,s,t)\in T_\omega (a,c) }\,    \varepsilon(\omega, l)   
\,\varepsilon_s(a)  \,\varepsilon_t(c) \langle (a\circ_{p,q}b)_s c_t \rangle$$
   and
$$\psi_\omega (k,p,q)=  \sum_{(l,s,t) \in T_\omega (b,c) }\,    \varepsilon(\omega, l)   
\,\varepsilon_s(b)  \,\varepsilon_t(c) \langle (a\circ_{p,q}b)_s c_t \rangle.$$
Combining   \eqref{monoee} with  \eqref{monoef}, we obtain  
\begin{equation}\label{monoeg} (x \bullet_{\omega} y )\bullet_\omega z= \sum_{(k,p,q) \in T_\omega (a,b) }\,   \varepsilon(\omega, k)   \,\varepsilon_p(a)  \,\varepsilon_q(b)\big (\varphi_\omega(k,p,q) +\psi_\omega(k,p,q) \big ). \end{equation}

To compute the latter sum, we rewrite  $\varphi_\omega(k,p,q)$ as follows.
For $s\in a \cap \alpha_l$,  the homotopy class   $\langle (a\circ_{p,q}b)_s c_t \rangle$ is represented by the loop $b\circ_{p,q} a  \circ_{s,t} c$ obtained by grafting~$b$ and~$c$ to~$a$  via  a  path  in $\alpha_k$ from $p\in a\cap \alpha_k$ to $ q\in b\cap \alpha_k$ and a path  in $\alpha_l$  from $s\in a\cap \alpha_l$ to $t\in c\cap \alpha_l$. To give a   precise description of this loop,  we  separate two cases. 

Case 1: $p\neq s$ so that  the points $p ,q ,s ,t $ are pairwise distinct (possibly, $k=l$). In this case the loop $b\circ_{p,q} a  \circ_{s,t} c$  starts at~$p$ and goes:   along the gate $\alpha_k$   to~$q$, then along the full loop~$b$ back to~$q$,  then  along $\alpha_k$ back to~$p$, then  along~$a$ to~$s$, then  along the gate  $\alpha_l$ to~$t$, then along the full  loop~$c$ back to~$t$, then along~$\alpha_l$ back  to~$s$, and finally  along~$a$ back to~$p$. 

Case 2: $p= s$. Then  $k=l$ and    $p ,q ,t $ are three distinct points on  the gate  $\alpha_k$.
If $\varepsilon_p(a)=+1$, then the loop $b\circ_{p,q} a  \circ_{s,t} c$  starts at~$p$ and goes:   along $\alpha_k$   to~$q$, then along the full loop~$b$ back to~$q$,  then  along $\alpha_k$ to~$t$, then along the full  loop~$c$ back to~$t$, then along $\alpha_k$ to~$p$,  and finally  along the full loop~$a$ back to~$p$.
 If $\varepsilon_p(a)=-1$, then the loop $b\circ_{p,q} a  \circ_{s,t} c$  starts at~$p$ and goes:   along $\alpha_k$   to~$q$, then along the full loop~$b$ back to~$q$,  then  along $\alpha_k$ to~$p$, then along the full  loop~$a$ back to~$p$,  then along $\alpha_k$ to~$t$, then along the full loop~$c$ back to~$t$,  and finally  along~$\alpha_k$ to~$p$.  
 
 In both cases, 
\begin{equation}\label{monoeg+}\varphi_\omega(k,p,q)=  \sum_{(l,s,t)\in T_\omega (a,c)}\,    \varepsilon(\omega, l)   
\,\varepsilon_s(a)  \,\varepsilon_t(c) \langle b\circ_{p,q} a  \circ_{s,t} c \rangle. \end{equation}
We call the summands  corresponding to    the triples $(l,s,t)\in  T_\omega (a,c)$ with $p\neq s$ the 4-tuple  terms. The   summands with $p=s$ (and $ k=l$) are  called  3-tuple terms.

Similarly, for $s\in b \cap \alpha_l$,  the homotopy class   $\langle (a\circ_{p,q}b)_s c_t \rangle$ is represented by the loop $a\circ_{p,q} b  \circ_{s,t} c$ obtained by grafting~$a$ and~$c$ to~$b$  via a path  in $\alpha_k$ from $p\in a\cap \alpha_k$ to $ q\in b\cap \alpha_k$ and  a path  in $\alpha_l$  from $s\in b\cap \alpha_l$ to $t\in c\cap \alpha_l$. A precise description  of this loop  also includes two cases determined by  whether or not $q=s$; we leave the details  to the reader. Thus,   
\begin{equation}\label{monoeg++}\psi_\omega (k,p,q)=  \sum_{(l,s,t) \in T_\omega ( b,c) }\,    \varepsilon(\omega, l)   
\,\varepsilon_s(b)  \,\varepsilon_t(c) \langle a\circ_{p,q} b  \circ_{s,t} c \rangle. \end{equation} We call the summands  corresponding to    the triples
   $(l,s,t)\in  T_\omega (b,c)$ with $q\neq s$ the 4-tuple  terms. The summands  with $q=s$ (and   $ k=l$)   are  called   3-tuple terms.
   
   Substituting these expressions  for  $\varphi, \psi$ in \eqref{monoeg}, we expand  $(x \bullet_{\omega} y )\bullet_\omega z$ as a linear combination of  4-tuple  and  3-tuple terms. Then Formula~\eqref{idyvg3n} yields such an expansion of $u_\omega (x,y,z)$    and Formula \eqref{idvnvnyvg3n+}   yields such an expansion of  $P(x,y,z)$. The  (total) contribution   of  the     4-tuple terms to $  P (x,y,z)$ is denoted by $  P_4 (x,y,z)$, and the  (total) contribution   of  the     3-tuple terms to $  P (x,y,z)$ is denoted by $  P_3 (x,y,z)$.
   We stress that $  P (x,y,z) = P_4 (x,y,z)+  P_3(x,y,z)$.
   

We  prove  next  that  $P_4 (x,y,z)=P_4 (y,x,z)$.  Note first that   each point  $p,q,s,t$   in a 4-tuple (or a 3-tuple)  term    is traversed by exactly  one of the loops $a,b,c$. We will write $\varepsilon_p, \varepsilon_q, \varepsilon_s, \varepsilon_t$ for the corresponding signs $\pm 1$. For example, $\varepsilon_p=\varepsilon_p (a), \varepsilon_q=\varepsilon_q(b)$, etc. Also   set
  $\varepsilon(\omega, k, l) = \varepsilon(\omega, k)  \, \varepsilon(\omega, l) $.   In this notation, the  contribution  of  4-tuple terms to $(x\bullet_{\omega} y) \bullet_{\omega} z$
is equal to $\varphi_\omega^{x,y,z} +\psi_\omega^{x,y,z}$
where
$$\varphi_\omega^{x,y,z}= \sum_{ \begin{array}[b]{r}
     { (k,p,q) \in T_\omega (a,b) }\\
      {(l,s,t)\in T_\omega (a,c), p\neq s }
    \end{array} } \,  \varepsilon(\omega, k, l) \,\varepsilon_p \varepsilon_q \varepsilon_s \varepsilon_t  \langle b\circ_{p,q} a  \circ_{s,t} c \rangle$$
  and $$\psi_\omega^{x,y,z}= \sum_{ \begin{array}[b]{r}
     { (k,p,q) \in T_\omega (a,b) }\\
      {(l,s,t)\in T_\omega (b,c), q\neq s }
    \end{array} } \,   \varepsilon(\omega, k, l)  \,\varepsilon_p \varepsilon_q \varepsilon_s \varepsilon_t \langle a\circ_{p,q} b  \circ_{s,t} c \rangle. $$
  To     compute $\varphi_\omega^{y,z,x}$ and $\psi_\omega^{y,z,x}$, we cyclically permute $x,y,z$ and $a,b,c$ in the formulas above via  $a\mapsto b \mapsto c \mapsto a$.  It is convenient to  simultaneously permute the indices~$k, l$ and   permute the labels $p,q,s,t$ via    $p\mapsto s \mapsto q \mapsto t \mapsto p$. This gives 
  $$\varphi_\omega^{y,z,x}= \sum_{ \begin{array}[b]{r}
     { (l,s,t) \in  T_\omega (b, c) }\\
      {(k,q,p)\in T_\omega (b,a), s\neq q }
    \end{array} }\,   \varepsilon(\omega, k, l)  \,\varepsilon_p \varepsilon_q \varepsilon_s \varepsilon_t \,  \langle c\circ_{s,t} b  \circ_{q,p} a \rangle$$
  and $$\psi_\omega^{ y,z,x}= \sum_{ \begin{array}[b]{r}
     { (l,s,t) \in T_\omega (b,c) }\\
      {(k,q,p)\in T_\omega (c, a) , t\neq q}
    \end{array} }\,   \varepsilon(\omega, k, l)  \,\varepsilon_p \varepsilon_q \varepsilon_s \varepsilon_t \, \langle b\circ_{s,t} c \circ_{q,p} a \rangle. $$
 Applying the same permutations again, we get 
 $$\varphi_\omega^{z,x,y}= \sum_{ \begin{array}[b]{r}
     { (k,q,p) \in T_\omega (c,a) }\\
      {(l,t,s)\in T_\omega (c,b) , q\neq t}
    \end{array} }\,   \varepsilon(\omega, k, l)  \,\varepsilon_p \varepsilon_q \varepsilon_s \varepsilon_t\,  \langle a\circ_{q,p } c \circ_{t,s} b \rangle .$$
To compute $\psi_\omega^{ z,x ,y}$,  we  apply to $\psi_\omega^{ y,z,x}$  the   permutation  $a\mapsto b \mapsto c \mapsto a$  and the following permutation of the   indices:   $p\mapsto q\mapsto p, s \mapsto t \mapsto s$, $k\mapsto k, l \mapsto l$. Thus, 
  $$\psi_\omega^{ z,x ,y}= \sum_{ \begin{array}[b]{r}
     { (l,t,s) \in T_\omega (c,a) }\\
      {(k,p,q)\in T_\omega (a,b) , s\neq p }
    \end{array} }\,   \varepsilon(\omega, k, l)  \,\varepsilon_p \varepsilon_q \varepsilon_s \varepsilon_t\, \langle c\circ_{t,s} a \circ_{p,q} b \rangle. $$
    We conclude that the contribution  of  4-tuple terms to 
   $u_\omega (x,y,z)$ is equal to $$\varphi_\omega^{x,y,z}+\psi_\omega^{x,y,z}+ \varphi_\omega^{y,z,x} + \psi_\omega^{y,z,x}+ \varphi_\omega^{z,x,y}  + \psi_\omega^{z,x,y}
  = \Delta_\omega^{x,y,z}+ \Delta_\omega^{y,z,x}+ \Delta_\omega^{z,x,y}$$
for 
$$ \Delta_\omega^{x,y,z}=\psi_\omega^{x,y,z}+\varphi_\omega^{y,z,x}, \,\,\,\Delta_\omega^{y,z,x}=\psi_\omega^{y,z,x}+\varphi_\omega^{z,x,y}, \,\,\,  \Delta_\omega^{z,x,y}=\psi_\omega^{z,x,y}+\varphi_\omega^{x,y,z}.$$
Then 
  $$P_4(x,y,z)= ( \Delta_\omega^{x,y,z} + \Delta_{\overline \omega}^{x,y,z}) + ( \Delta_\omega^{y,z,x} + \Delta_{\overline \omega}^{y,z,x})+ ( \Delta_\omega^{z,x,y} + \Delta_{\overline \omega}^{z,x,y}).$$ We now compute all the $\Delta$'s. Comparing the expansions of  $\psi_\omega^{x,y,z} $ and $\varphi_\omega^{y,z,x}$ above, we observe that their summands are defined  by the same formula; here we   use the obvious fact  that the loops $ a\circ_{p,q} b  \circ_{s,t} c $ and  $c\circ_{s,t} b  \circ_{q,p} a$  are freely homotopic provided $q\neq s$.  The summation in these two expansions goes over complementary sets of indices as the inclusion    $(k,q,p)\in T_\omega (b,a)$   holds if and only if $ (k,p,q) \in T(a,b) \setminus T_\omega (a,b) $.  
  Therefore 
  $$ \Delta_\omega^{x,y,z}=  \sum_{ \begin{array}[b]{r}
     { (k,p,q) \in T (a,b) }\\
      {(l,s,t)\in T_\omega (b,c), s\neq q }
    \end{array} }\,   \varepsilon(\omega, k, l)  \,\varepsilon_p \varepsilon_q \varepsilon_s \varepsilon_t\, \langle a\circ_{p,q} b  \circ_{s,t} c \rangle. $$
  Analogously, 
  $$\Delta_\omega^{y,z,x} =\sum_{ \begin{array}[b]{r}
     { (l,s,t) \in T (b,c) }\\
      {(k,q,p)\in T_\omega (c, a), q\neq t }
    \end{array} }\,   \varepsilon(\omega, k, l)  \,\varepsilon_p \varepsilon_q \varepsilon_s \varepsilon_t\,  \langle b\circ_{s,t} c \circ_{q,p} a \rangle $$
  and
  $$\Delta_\omega^{z,x,y} =\sum_{ \begin{array}[b]{r}
     { (k,p,q) \in T_\omega (a,b) }\\
      {(l,s,t)\in T (a,c), s\neq p }
    \end{array} }\,   \varepsilon(\omega, k, l)  \,\varepsilon_p \varepsilon_q \varepsilon_s \varepsilon_t\,  \langle b\circ_{p,q} a  \circ_{s,t} c \rangle.$$
 Using   that $\varepsilon(\omega, k, l) = \varepsilon(\overline \omega, k, l) $, we  deduce from  these expressions   that
$$ \Delta_\omega^{x,y,z} + \Delta_{\overline \omega}^{x,y,z} = \sum_{ \begin{array}[b]{r}
     { (k,p,q) \in T (a,b) }\\
      {(l,s,t)\in T  (b,c), s\neq q }
    \end{array} }\,   \varepsilon(\omega, k, l)  \,\varepsilon_p \varepsilon_q \varepsilon_s \varepsilon_t\,  \langle a\circ_{p,q} b  \circ_{s,t} c \rangle, $$
  $$ \Delta_\omega^{y,z,x} + \Delta_{\overline \omega}^{y,z,x} =  \sum_{ \begin{array}[b]{r}
     { (l,s,t) \in T (b,c) }\\
      {(k,q,p)\in T (c, a), q\neq t }
    \end{array} }\,   \varepsilon(\omega, k, l)  \,\varepsilon_p \varepsilon_q \varepsilon_s \varepsilon_t\,  \langle b\circ_{s,t} c \circ_{q,p} a \rangle ,  $$
  $$ \Delta_\omega^{z,x,y} + \Delta_{\overline \omega}^{z,x,y} =\sum_{ \begin{array}[b]{r}
     { (k,p,q) \in T(a,b) }\\
      {(l,s,t)\in T (a,c), s\neq p }
    \end{array} }\,   \varepsilon(\omega, k, l)  \,\varepsilon_p \varepsilon_q \varepsilon_s \varepsilon_t\,  \langle b\circ_{p,q} a  \circ_{s,t} c \rangle.$$
As a consequence,  the permutation of $x,y$  keeps $ \Delta_\omega^{y,z,x} + \Delta_{\overline \omega}^{y,z,x}$ and  transforms $ \Delta_\omega^{x,y,z} + \Delta_{\overline \omega}^{x,y,z} $ and $ \Delta_\omega^{z,x,y} + \Delta_{\overline \omega}^{z,x,y}$   into each other.   Hence,     $P_4 (x,y,z)=P_4 (y,x,z)$
and 
so
$$  P(x,y,z)-P(y,x,z)=P_3(x,y,z)-P_3(y,x,z)$$
where $P_3(x,y,z)$ is the sum of 
 the 3-tuple terms   in the expansion of $P(x,y,z)$.
 Therefore to prove  \eqref{key} we need  to check that
 \begin{equation}\label{00-}  P_3(x,y,z)-P_3(y,x,z)=\mu(x,y,z)-\mu(y,x,z). \end{equation}

 Observe that each 3-tuple term   in the expansion of $P(x,y,z)$  is associated with an element $k=l$ of $\pi_0(\alpha)$ and   pairwise distinct points $p \in a\cap \alpha_k, q\in b\cap \alpha_k, t\in c\cap \alpha_k$.
 For each such triple of points, set
 \begin{equation}\label{zz} j(p,q,t)=\delta (\varepsilon_p, \varepsilon_q, \varepsilon_t)  \langle a_p  b_q c_t  \rangle+
  (\delta (\varepsilon_p, \varepsilon_q, \varepsilon_t) -\varepsilon_p  \varepsilon_q  \varepsilon_t)  \langle a_p  c_t  b_q  \rangle  \end{equation} and 
  \begin{equation}\label{zz+} j'(p,q,t)=\delta (\varepsilon_p, \varepsilon_q, \varepsilon_t)  \langle a_p  c_t  b_q \rangle+
  (\delta (\varepsilon_p, \varepsilon_q, \varepsilon_t) -\varepsilon_p  \varepsilon_q  \varepsilon_t)  \langle a_p   b_q c_t    \rangle . \end{equation} Here $\delta$ is the function of 3 signs defined in Section~\ref{section3loops+}.  The loop  $ a_p b_q    c_t  $ is the product of three loops $a_p, b_q, c_t$   formed via  connecting  their base points $p,q,t$   by arbitrary paths in $\alpha_k$. (In other words, we treat  $\alpha_k$ as a big base point  for these  loops.)  The loop  $ a_p  c_t b_q$ is defined similarly. Note that by
   the cyclic symmetry of     free homotopy classes of loops, we have  \begin{equation}\label{cyc} 
 \langle  a_p b_q c_t  \rangle=   \langle b_q  c_t a_p  \rangle=\langle c_t a_p b_q  \rangle \quad {\text {and}} \quad  \langle  a_p c_t b_q \rangle= 
     \langle b_q a_p c_t  \rangle=\langle  c_t b_q a_p \rangle   . \end{equation}

  Let $P_{3,k} (x,y,z)$ be the sum of 3-tuple terms associated with    $k\in \pi_0(\alpha)$. Clearly,   $$P_3(x,y,z)=\sum_{k\in \pi_0(\alpha)} P_{3,k} (x,y,z).$$
We prove below  that for all~$k$,
\begin{equation}\label{00}  P_{3,k}(x,y,z)   =\sum_{p,q,t} j(p,q,t)  . \end{equation}
In this  and  similar sums   $  p,q,t$ run respectively over the sets  $ a\cap \alpha_k, b\cap \alpha_k, c\cap \alpha_k$.   We first explain that this formula implies \eqref{00-}. Indeed, 
using the invariance of~$\delta$ under permutations   and  \eqref{cyc}, we deduce from \eqref{00} that
$$  P_{3,k}(y,x,z)   =\sum_{p,q,t}  
 j'(p,q,t) .$$  
 Therefore
 $$ P_{3,k} (x,y,z)- P_{3,k} (y,x,z) =\sum_{p,q,t} \big  ( j(p,q,t) - j'(p,q,t) \big ) $$
 $$=\sum_{p,q,t}
  \varepsilon_p\varepsilon_q \varepsilon_t  \langle a_p b_q  c_t   \rangle -
  \sum_{p,q,t}
  \varepsilon_p\varepsilon_q \varepsilon_t  \langle a_p   c_t b_q   \rangle $$
  $$= \mu^3_{k} (x,y,z)- \mu^3_{k} (x,z,y)= \mu^3_{k} (x,y,z)- \mu^3_{k} (y,x,z).$$
 Adding up these equalities over 
 all $k\in \pi_0(\alpha)$, we  get \eqref{00-}.
 
 It remains to prove~\eqref{00}.  
Fix $k\in \pi_0(\alpha)$.
Observe that Formulas \eqref{monoeg}--\eqref{monoeg++}    simplify for 3-tuple terms. Indeed,   $\varepsilon(\omega, k) \, \varepsilon(\omega,   l)=+1$ as $k=l$. Also, if $s=p$, then $\varepsilon_s \varepsilon_p=+1$; if   $s=q$, then $\varepsilon_s \varepsilon_q=+1$.      By the computations above, the  contribution  of  the 3-tuple terms (with fixed~$k$)  to $(x\bullet_{\omega} y) \bullet_{\omega} z$
is   equal to $\Phi_{\omega,k}^{x,y,z} +\Psi_{\omega,k}^{x,y,z}$
where
$$\Phi_{\omega,k}^{x,y,z}=  \sum_{  
     { q<_\omega p, \, t<_\omega p}
  } \,  \varepsilon_q \, \varepsilon_t    \, \langle b\circ_{p,q} a  \circ_{p,t} c \rangle, $$
  $$\Psi_{\omega,k}^{x,y,z}=    \sum_{   t<_\omega  q <_\omega p}
    \,  \varepsilon_p   \, \varepsilon_t   \,  \langle a\circ_{p,q} b  \circ_{q,t} c \rangle $$
   It is understood that the sum runs over     $ p\in a\cap \alpha_k, q\in b\cap \alpha_k, t\in c\cap \alpha_k$ satisfying the indicated inequalities.
    The description of the loop $  b\circ_{p,q} a  \circ_{p,t} c  $ above shows that it is freely homotopic to  $  a_p b_q c_t   $ if $\varepsilon_p =1$ and 
  to $a_pc_t b_q$ if $\varepsilon_p =-1$. Thus, 
    $$\langle b\circ_{p,q} a  \circ_{p,t} c \rangle=\delta( \varepsilon_p) \langle a_p  b_q   c_t  \rangle+ (1-\delta(\varepsilon_p )) \langle  a_p c_t  b_q \rangle   
    $$ and 
     \begin{equation}\label{1a} 
    \Phi_{\omega,k}^{x,y,z}=  \sum_{  
     { q<_\omega p, \, t<_\omega p}
  } \,  \varepsilon_q \, \varepsilon_t    \, \big (\delta( \varepsilon_p) \langle a_p  b_q   c_t  \rangle+ (1-\delta(\varepsilon_p )) \langle  a_p c_t  b_q \rangle   \big ). \end{equation}
    Similarly,
     \begin{equation}\label{2a} 
    \Psi_{\omega,k}^{x,y,z}=    \sum_{   t<_\omega  q <_\omega p}
    \,  \varepsilon_p   \, \varepsilon_t   \, \big ( \delta( \varepsilon_q)  \langle a_p c_t b_q  \rangle + 
   (1-\delta( \varepsilon_q)) \langle a_p  b_q c_t \rangle  \big ). \end{equation}
Cyclically permuting  $(x,y,z)$,   $(a,b,c)$,      $(p,q,t)$, we get 
  \begin{equation}\label{3a} 
    \Phi_{\omega,k}^{y,z,x}=  \sum_{  
     { t<_\omega q, \, p<_\omega q}
  } \,  \varepsilon_t \, \varepsilon_p    \, \big (
    \delta( \varepsilon_q)  \langle a_p  b_q c_t \rangle + (1-\delta( \varepsilon_q)) \langle a_p c_t b_q  \rangle \big ) ,\end{equation}
     \begin{equation}\label{4a} 
    \Psi_{\omega,k}^{y,z,x}=    \sum_{  p<_\omega  t <_\omega q}
    \,  \varepsilon_q   \, \varepsilon_p   \, \big ( \delta( \varepsilon_t) \langle a_p c_t b_q  \rangle + 
   (1-\delta( \varepsilon_t))  \langle  a_p  b_q c_t  \rangle \big ),  \end{equation}
     \begin{equation}\label{5a} 
    \Phi_{\omega,k}^{z,x,y}=  \sum_{  
     {p<_\omega t, \,q<_\omega t}
  } \,  \varepsilon_p\, \varepsilon_q    \, \big ( 
    \delta( \varepsilon_t)  \langle a_p  b_q c_t  \rangle +(1-\delta( \varepsilon_t)) \langle a_p c_t b_q  \rangle \big ) , \end{equation}
     \begin{equation}\label{6a} 
    \Psi_{\omega,k}^{z,x,y}=    \sum_{  q<_\omega p<_\omega t}
    \,  \varepsilon_t   \, \varepsilon_q   \, \big ( \delta( \varepsilon_p) \langle a_p c_t b_q  \rangle + 
    (1-\delta (\varepsilon_p))   \langle a_p  b_q c_t  \rangle \big ). \end{equation}
    The  contribution of 3-tuple terms  (with given~$k$) to $u_\omega (x,y,z)$ is   the sum of 6 terms \eqref{1a}--\eqref{6a}. Then, by \eqref{idvnvnyvg3n+},    $P_{3,k}(x,y,z)$  is the sum of these 6 terms and 6 similar terms  obtained  by replacing~$\omega$ with~$\overline{\omega}$.  Under this replacement, the only change  on the right-hand sides of Formulas  \eqref{1a}--\eqref{6a} concerns  the summation domain. For example,  the summation domain in \eqref{1a}  changes from 
    the set of triples $p,q,t$ such that  ${ q<_\omega p, \, t<_\omega p}$ to the set of triples $p,q,t$ such that ${ q<_{\overline{\omega}} p, \, t<_{\overline{\omega}} p}$. The latter condition may be rewritten as  ${ p<_{ {\omega}} q, \, p<_{ {\omega}} t}$.  
    
  For $p\in a\cap \alpha_k, q \in b\cap \alpha_k , t \in c\cap \alpha_k$,     consider the 12 terms as above and pick their    $(p,q,t)$-summands (some of the  $(p,q,t)$-summands   may be  zero).    We claim that the sum of these  12 summands is equal to $j(p,q,t)$ for all $p,q,t$. 
    This clearly    implies  Formula~\eqref{00}. 
To prove our claim,   we consider possible   positions of the points $p,q,t$ on $\alpha_k$. Replacing, if necessary, $\omega$ by~$\overline \omega$, we can assume that $p<_\omega q$.  This leaves us with 3 cases:
(a)  $t<_\omega  p$; (b) $p<_\omega  t <_\omega  q$, and (c) $q <_\omega t$. In Case (a), only the $(p,q,t)$-summands of $\Phi^{y,z,x}_{\omega, k}$, $\Phi^{z,x,y}_{\overline \omega, k}$, and 
$\Psi^{z,x,y}_{\overline \omega, k}$ may be non-zero  and their sum is   
$$ \varepsilon_t \, \varepsilon_p   
    \delta( \varepsilon_q)  \langle a_p  b_q c_t \rangle + \varepsilon_t \, \varepsilon_p    (1-\delta( \varepsilon_q)) \langle a_p c_t b_q  \rangle $$
    $$+ \varepsilon_p\, \varepsilon_q    
    \delta( \varepsilon_t)  \langle a_p  b_q c_t  \rangle + \varepsilon_p\, \varepsilon_q  (1-\delta( \varepsilon_t)) \langle a_p c_t b_q  \rangle$$
    $$+\varepsilon_t   \, \varepsilon_q    \delta( \varepsilon_p) \langle a_p c_t b_q  \rangle + 
  \varepsilon_t   \, \varepsilon_q     (1-\delta (\varepsilon_p))   \langle a_p  b_q c_t  \rangle =j(p,q,t) $$  
where the last equality follows from  \eqref{dd} and \eqref{dd+}.
 In Case (b), only the $(p,q,t)$-summands of $\Phi^{y,z,x}_{\omega, k}$,  $\Psi^{y,z,x}_{\omega, k}$, and $\Phi^{x,y,z}_{\overline \omega, k}$  may be non-zero and their sum is equal to
$$   \varepsilon_t \, \varepsilon_p     
    \delta( \varepsilon_q)  \langle a_p  b_q c_t \rangle +  \varepsilon_t \, \varepsilon_p      (1-\delta( \varepsilon_q)) \langle a_p c_t b_q  \rangle$$
    $$+\varepsilon_q   \, \varepsilon_p   \delta( \varepsilon_t) \langle a_p c_t b_q  \rangle + 
  \varepsilon_q   \, \varepsilon_p  (1-\delta( \varepsilon_t))  \langle  a_p  b_q c_t  \rangle$$
  $$+ \varepsilon_q \, \varepsilon_t    \delta( \varepsilon_p) \langle a_p  b_q   c_t  \rangle+\varepsilon_q \, \varepsilon_t  (1-\delta(\varepsilon_p )) \langle  a_p c_t  b_q \rangle = j(p,q,t). $$  
  In Case (c), only the $(p,q,t)$-summands of $\Phi^{z,x,y}_{\omega, k}$,  $\Phi^{x,y,z}_{\overline \omega, k}$, and $\Psi^{x,y,z}_{\overline \omega, k}$ may be non-zero and their sum is equal to 
    $$   \varepsilon_t \, \varepsilon_p     
    \delta( \varepsilon_q)  \langle a_p  b_q c_t \rangle + \varepsilon_t \, \varepsilon_p   (1-\delta( \varepsilon_q)) \langle a_p c_t b_q  \rangle $$
    $$+  \varepsilon_q \, \varepsilon_t    \delta( \varepsilon_p) \langle a_p  b_q   c_t  \rangle+ \varepsilon_q \, \varepsilon_t   (1-\delta(\varepsilon_p )) \langle  a_p c_t  b_q \rangle  $$
  $$+   \varepsilon_p   \, \varepsilon_t     \delta( \varepsilon_q)  \langle a_p c_t b_q  \rangle + 
  \varepsilon_p   \, \varepsilon_t     (1-\delta( \varepsilon_q)) \langle a_p  b_q c_t \rangle = j(p,q,t)$$
  This proves the claim above  and completes the proof of the theorem.

\egm

    \end{document}